\documentclass[11pt]{article}
\usepackage{amsmath,amsthm}
\usepackage{amssymb,mathrsfs,pifont}
\usepackage{bm,mathtools}

\usepackage{xcolor}
\usepackage{geometry}
\usepackage[colorlinks=true,
linkcolor=blue,citecolor=blue,
urlcolor=blue]{hyperref}

\makeatletter
\def\@seccntDot{.}
\def\@seccntformat#1{\csname the#1\endcsname\@seccntDot\hskip 0.5em}
\renewcommand\section{\@startsection{section}{1}{\z@}%
{18\p@ \@plus 6\p@ \@minus 3\p@}%
{9\p@ \@plus 6\p@ \@minus 3\p@}%
{\large\bfseries\boldmath}}
\renewcommand\subsection{\@startsection{subsection}{2}{\z@}%
{15\p@ \@plus 6\p@ \@minus 3\p@}%
{6\p@ \@plus 6\p@ \@minus 3\p@}%
{\itshape}}
\renewcommand\subsubsection{\@startsection{subsubsection}{3}{\z@}%
{12\p@ \@plus 6\p@ \@minus 3\p@}%
{\p@}%
{}}
\makeatother

\usepackage{microtype}

\theoremstyle{plain}
\newtheorem{theorem}{Theorem}[section]
\newtheorem{lemma}{Lemma}[section]

\newtheorem{corollary}{Corollary}[section]
\newtheorem{proposition}{Proposition}[section]

\theoremstyle{definition}

\newtheorem{remark}{Remark}[section]

\numberwithin{equation}{section}
\allowdisplaybreaks
\DeclareMathOperator{\ex}{ex}

\title{\bf Spectral Tur\'an Problems for Expanded hypergraphs}

\author {Zhenyu Ni$^{1}$, Dongquan Cheng$^{1}$,\,Jing Wang$^{2,3}$,  \,  Liying Kang$^{4}$\thanks{\em Corresponding author. Email address: lykang@shu.edu.cn (L. Kang), 995264@hainanu.edu.cn(Z. Ni), wj517062214@163.com(J. Wang), cdongquan@foxmail.com(D. Cheng).}\\
{\small $^{1}$Department of Mathematics, Hainan University,
Haikou 570228, P.R. China}\\
{\small $^{2}$School of Mathematics and Statistics, Henan Normal University,}
\\{\small Xinxiang 453007, P.R. China}\\
{\small $^3$Extremal Combinatorics and Probability Group, Institute for Basic Science}, \\{\small Daejeon, South Korea}\\
{\small $^{4}$Department of Mathematics, Shanghai University,
Shanghai 200444, P.R. China}}

\date{}

\begin{document}
\maketitle
\begin{abstract}

Given a graph $F$, the expansion $F^{(r)}$ of $F$  is defined as the $r$-uniform hypergraph obtained from $F$ by adding a set of $(r-2)$ distinct new vertices to each edge of $F$. In this paper, we investigate spectral stability results for hypergraphs and their applications. We first establish a spectral stability property: for any $r$-uniform hypergraph containing no copy of the expansion $F^{(r)}$ of a $(k+1)$-chromatic graph $F$, if  its $p$-spectral is close to the extremal value, then  the hypergraph is structurally close to $T_r(n, k)$, the complete $k$-partite $r$-uniform hypergraph  on $n$ vertices where sizes of any two parts differ by at most one. Using this spectral stability result, we determine
 the unique extremal hypergraph that maximizes the $p$-spectral radius among all $n$-vertex $r$-uniform hypergraphs without $t$ vertex-disjoint copies of the expansion $K_{k+1}^{(r)}$ of $K_{k+1}$. We prove that this extremal hypergraph is isomorphic to $K_{t-1}^{r} \,\vee\, T_r(n-t+1, k)$,  the join of the complete $r$-uniform hypergraph $K_{t-1}^{r}$ and $T_r(n-t+1, k)$. As a corollary, we  show  that $K_{t-1}^{r} \,\vee\, T_r(n-t+1, k)$ is the unique extremal hypergraph for $tK_{k+1}^{(r)}$,  which extends a result of Pikhurko [J. Combin. Theory Ser. B, 103 (2013) 220--225] for expanded complete graphs.


\par\vspace{2mm}

\noindent{\bfseries Keywords:} Hypergraph; Spectral radius; Spectral Tur\'an problem.
\par\vspace{2mm}

\noindent{\bfseries AMS Classification:} 05C35; 05C50; 05C65.
\end{abstract}

\section{Introduction}
As a natural generalization of graphs, hypergraphs offer a versatile framework for modeling higher-order relations in combinatorics and its adjacent fields.
In classical extremal hypergraph theory, the primary focus is on the Tur\'an number $\mathrm{ex}_r(n,\mathcal{F})$ for a given forbidden
$r$-uniform hypergraph
 $\mathcal{F}$. Specifically, $\mathrm{ex}_r(n,\mathcal{F})$ denotes the maximum number of edges in an $\mathcal{F}$-free $r$-uniform hypergraph on $n$ vertices.
More recently, inspired by the remarkable success of spectral methods in graph theory, research attention has shifted toward the  spectral Tur\'an problems for hypergraphs. In these problems, the edge count is replaced by an appropriate spectral parameter, and the central question becomes: what is the maximum possible spectral radius among all $\mathcal{F}$-free $r$-uniform hypergraphs on $n$ vertices?
Early studies on graphs (i.e., the case $r=2$) demonstrate that substituting the edge number with the spectral radius leads to a stronger version of the classical Tur\'an's theorem (see \cite{LiLiuFeng2022,Nikiforov2007,Nikiforov2010} for further details).

For any real number $p\geq 1$, the $p$-spectral radius of hypergraphs was introduced
by Keevash, Lenz and Mubayi \cite{Keevash-Lenz-Mubayi2014} and subsequently studied
by Nikiforov \cite{Nikiforov2014}. Given an $r$-uniform hypergraph (or $r$-graph)
 $\mathcal{H}$ of order $n$, and a vector
$\mathbf{x}=(x_1,x_2,\ldots,x_n)^{\mathrm{T}}\in\mathbb{R}^n$, denote
\[
P_{\mathcal{H}}(\mathbf{x}) = r! \sum_{e\in E(\mathcal{H})} \mathbf{x}^e,
\]
where $\mathbf{x}^{e}=\prod_{v\in e} x_v$.
The \emph{$p$-spectral radius} of $\mathcal{H}$ is defined as
\begin{equation}\label{eq:definition-p-spectral-radius}
	\lambda^{(p)}(\mathcal{H}):= \max_{\|\mathbf{x}\|_p=1} P_{\mathcal{H}}(\mathbf{x})
	= \max_{\|\mathbf{x}\|_p=1} r! \sum_{e\in E(\mathcal{H})} \mathbf{x}^e,
\end{equation}
where $\|\mathbf{x}\|_p:=(|x_1|^p+\cdots+|x_n|^p)^{1/p}$. If $\mathbf{x}\in\mathbb{R}^n$ is
a vector with $\|\mathbf{x}\|_p=1$ such that $\lambda^{(p)}(\mathcal{H})=P_{\mathcal{H}}(\mathbf{x})$, then $\mathbf{x}$
is called an \emph{eigenvector} corresponding to $\lambda^{(p)}(\mathcal{H})$. It is obvious that there is always
a nonnegative real vector $\mathbf{x}$ with $\|\mathbf{x}\|_p=1$ such that $\lambda^{(p)} (\mathcal{H}) = P_{\mathcal{H}}(\mathbf{x})$.
In the following, we always consider the nonnegative eigenvector.

By Lagrange's method, we have the eigenvalue-eigenvector equation for $\lambda^{(p)}(\mathcal{H})$
and eigenvector $\mathbf{x}$ as follows:
\begin{equation}\label{eq:eigenequation}
	\lambda^{(p)}(\mathcal{H})x_i^{p-1} = (r-1)! \sum_{e\in E(\mathcal{H}), i\in e}\mathbf{x}^{e\setminus \{i\}}
	~~\text{for}\ x_i>0.
\end{equation}


\begin{remark}
	It is worth mentioning that the $p$-spectral radius $\lambda^{(p)}(\mathcal{H})$ shows remarkable
	connections with some hypergraph invariants. For instance, the quantity $\lambda^{(1)}(\mathcal{H})$
	is the Lagrangian of $\mathcal{H}$, the quantity $\lambda^{(2)}(\mathcal{H})$ is the notion of hypergraph
	spectral radius introduced by Friedman and Wigderson \cite{Friedman-Wigderson1995},
	$\lambda^{(r)}(\mathcal{H})/(r-1)!$ is the usual spectral radius introduced by Cooper and Dutle \cite{Cooper2012},
	$\lambda^{(\infty)}(\mathcal{H})/r!$ is the number of edges of $\mathcal{H}$ (see \cite[Proposition 2.10]{Nikiforov2014}).
\end{remark}

Let $k\geq r\geq 2$. An $r$-graph $H$ is called \emph{$k$-partite} if its vertex set
$V(H)$ can be partitioned into $k$ sets so that each edge contains at most one vertex
from each set. An edge maximal $k$-partite $r$-graph is called \emph{complete $k$-partite}. For integers $n_1,\ldots,n_k$, we write $K_k^{r}(n_1,\ldots,n_k)$ for the
complete $k$-partite $r$-graph whose vertex classes have sizes
$n_1,\ldots,n_k$, respectively.
Let $T_r(n,k)$ denote the complete $k$-partite $r$-graph on $n$ vertices where no
two parts differ by more than one in size. We write $t_r(n,k)$ for the number of edges
of $T_r(n,k)$. That is,
\begin{equation}\label{eq:size-T(n,k)}
t_r(n,k) = \sum_{S\in\binom{[k]}{r}} \prod_{i\in S} \bigg\lfloor \frac{n+i-1}{k}\bigg\rfloor
= \big(1 - O(n^{-1})\big) \cdot \frac{(k)_r}{k^r} \binom{n}{r},
\end{equation}
where $(k)_r = k(k-1)\cdots(k-r+1)$.
Let $K_k^{r}$ denote the complete $r$-graph on $k$ vertices.

Given a graph $F$, the \emph{expansion} $F^{(r)}$ of $F$ is the $r$-graph
obtained by enlarging each edge of $F$ with $(r-2)$ new vertices.
For $k\ge r+1$, let $\mathcal{H}_{k}^{(r)}$ be the family of all $r$-graphs $\mathcal{F}$ with at most $\binom{k}{2}$ edges such that some $k$-set $C$ (the \emph{core}) satisfies the property that every pair $\{u,v\}\subseteq C$ is contained in an edge of $\mathcal{F}$. Clearly $K_{k}^{(r)}\in \mathcal{H}_{k}^{(r)}$.
When $r=2$, this family consists only of the complete graph $K_k$, while for $r>2$ it contains several non-isomorphic $r$-graphs.
Given a graph $F$, we write $\mathcal{H}_{F}^{(r)}$ for the family of all $r$-graphs $\mathcal{H}$ with at most $e(F)$ edges such that every edge $uv\in E(F)$ lies in some edge of $\mathcal{H}$. In particular, the expansion $F^{(r)}$ belongs to $\mathcal{H}_{F}^{(r)}$.

In 2006, Mubayi \cite{Mubayi2006} considered the Tur\'an problem for $\mathcal{H}_{k+1}^{(r)}$.
He showed that $\ex (n, \mathcal{H}_{k+1}^{(r)}) = t_r(n,k)$ with the unique extremal
$r$-graph being $T_r(n,k)$. Mubayi \cite{Mubayi2006} further established structural stability of near-extremal
$\mathcal{H}_{k+1}^{(r)}$-free $r$-graphs. Using this stability property, Pikhurko \cite{Pikhurko2013}
later strengthened Mubayi's result to show that $\ex (n, K_{k+1}^{(r)}) = t_r(n,k)$ for
all sufficiently large $n$. Recently, for a nondegenerate $r$-graph $F$,   Hou et al. \cite{HL1} presented a general approach for determining the maximum
number of edges in an $n$-vertex $r$-graph that does not contain $t + 1$ vertex-disjoint copies of $F$.
For a broad class of degenerate hypergraphs $F$, Hou et al. \cite{HL2} presented
near-optimal upper bounds for $\ex (n, (t+1)F)$.
For more results on Tur\'an number for $r$-graphs, we refer
the reader to the surveys \cite{Keevash2011} and \cite{Mubayi-Verstraete2016}.

The study of spectral Tur\'an problems for $r$-graphs has attracted significant attention in recent years. Early progress was made by Keevash, Lenz, and Mubayi \cite{Keevash-Lenz-Mubayi2014}, who established two general criteria for the $p$-spectral radius of $r$-uniform hypergraphs
that can be applied to obtain a variety of spectral Tur\'an-type results. Extending the spectral Mantel's theorem to hypergraphs,
Ni, Liu,  and Kang \cite{NiLiuKang2024} determined the maximum $p$-spectral radius
of $\{F_4, F_5\}$-free $3$-graphs, and characterized the extremal hypergraph,
where $F_4 = \{abc, abd, bcd\}$ and $F_5 = \{abc, abd, cde\}$. Recently, Zheng,
Li, and Fan \cite{ZhengLiFan2024} established the maximum $p$-spectral radius
of $K_{k+1}^{(r)}$-free $r$-graphs by leveraging results from Keevash, Lenz, and
Mubayi \cite{Keevash-Lenz-Mubayi2014}, and further characterized the extremal
hypergraph attaining this bound. Hou, Liu, and Zhao \cite{hou2024criterion} established a simple criterion for degree-stability of hypergraphs. An immediate application of this result, combined with the general theorem by Keevash, Lenz, and Mubayi~\cite{Keevash-Lenz-Mubayi2014}, solves the $p$-spectral Tur\'an problems for a large class of hypergraphs.

Research has also been conducted on spectral Tur\'an-type problems over specific
classes of hypergraphs.
Gao, Chang, and Hou \cite{GaoChangHou2022}
investigated the spectral extremal problem for $K_{r+1}^{(r)}$-free $r$-graphs among linear
hypergraphs. They proved that the spectral radius of an $n$-vertex
$K_{r+1}^{(r)}$-free linear $r$-graph is no more than $n/r$ when $n$ is sufficiently large.
Generalizing Gao, Chang, and Hou's result, She, Fan, Kang, and Hou \cite{SheFanKangHou2023}
presented sharp (or asymptotic) bounds of the spectral radius of $F^{(r)}$-free linear
$r$-graphs by establishing the connection between the spectral radius of linear
hypergraphs and those of their shadow graphs.  Here, $F$ is a graph  with chromatic
number $k \geq r +1$. Fang, Gao, Chang, and Hou \cite{FangGaoChangHou2025}
presented some sharp bounds on the spectral radius of $K^{(r)}_{s,t}$
free linear $r$-graphs by establishing the connection between the spectral radius and
the number of walks in uniform hypergraphs.
Another relevant result,
due to Ellingham, Lu, and Wang \cite{EllinghamLuWang2022}, showed that the
$n$-vertex outerplanar $3$-graph of maximum spectral radius is the unique $3$-graph
whose shadow graph is the join of an isolated vertex and the path $P_{n-1}$.

Spectral stability results are powerful tools for solving spectral extremal problems,
which say roughly that a near-extremal (with respect to spectral radius) $n$-vertex $F$-free
graph must be structurally close to the extremal graphs. Such stability results are crucial
in resolving spectral Tur\'an-type problems.
In this paper, we first establish a general spectral Tur\'an-type stability result for expanded hypergraphs.

\begin{theorem}\label{F+spectral-stability}
Let $F$ be a graph with $\chi(F)=k+1$, $p>1$, and $k\ge r\ge 3$.
For every $\varepsilon>0$, there exist constants
$\delta=\delta(k,r,\varepsilon)>0$ and $n_0=n_0(k,r,\varepsilon)$ such that
the following holds for all $n>n_0$:
If $\mathcal{H}$ is an $n$-vertex $F^{(r)}$-free $r$-graph with
\[
\lambda^{(p)}(\mathcal{H})
  >\lambda^{(p)}(T_r(n,k)) - \delta n^{r(1-1/p)},
\]
then $\mathcal{H}$ is $\varepsilon n^{r}$-close to $T_r(n,k)$, i.e., $\mathcal H$ can be transformed
to $T_r(n,k)$ by adding and deleting at most $\varepsilon\binom{n}{r}$ edges.
\end{theorem}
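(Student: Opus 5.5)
The plan is to reduce spectral closeness to edge-count closeness and then invoke the edge stability for expansions. The edge version is due to Mubayi for $\mathcal{H}_{k+1}^{(r)}$ and Pikhurko, and it extends to $F^{(r)}$ with $\chi(F)=k+1$ via the standard blow-up/supersaturation argument. It says: for every $\varepsilon>0$ there are $\delta_1>0$ and $n_1$ such that every $F^{(r)}$-free $r$-graph on $n\ge n_1$ vertices with $e(\mathcal H)>t_r(n,k)-\delta_1 n^r$ is $\varepsilon n^r$-close to $T_r(n,k)$. The spectral input will be the Keevash--Lenz--Mubayi style bound relating $\lambda^{(p)}$ to edge density for $p>1$. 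Because the hypothesis only controls $\lambda^{(p)}$, I cannot apply edge stability directly. Instead I first pass to a large induced subgraph on which the edge count is near extremal, and then transfer the structure back.

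\textbf{Step 1 (a well-spread eigenvector).} Take a nonnegative eigenvector $\mathbf x$ with $\|\mathbf x\|_p=1$, and compare with $T_r(n,k)$. The test vector $n^{-1/p}\mathbf 1$ gives $\lambda^{(p)}(T_r(n,k))\ge r!\,t_r(n,k)n^{-r/p}=(1-o(1))\frac{(k)_r}{k^r}n^{r(1-1/p)}$. Conversely, by the Lagrangian bound for $\mathcal{H}_{k+1}^{(r)}$-free (hence, after removing few edges, $F^{(r)}$-free-type) graphs, together with H\"older's inequality, $P_{\mathcal H}(\mathbf x)\le \lambda^{(1)}\cdot\|\mathbf x\|_1^r$ and $\|\mathbf x\|_1\le n^{1-1/p}$. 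The point is that both inequalities in this chain are nearly tight under the hypothesis. Near-equality in H\"older forces most coordinates to satisfy $x_i=(1\pm\eta)n^{-1/p}$ except on a set $B$ of at most $\eta n$ vertices, with $\eta\to0$ as $\delta\to0$. Here the strict convexity for $p>1$ is used. To make this rigorous I will use the eigen-equation \eqref{eq:eigenequation}. It shows $x_i^{p-1}\lambda\le (r-1)!\binom{n-1}{r-1}\max x^{r-1}$, which bounds $\max x_i=O(n^{-1/p})$. A second-moment (stability of H\"older) argument then gives concentration.

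\textbf{Step 2 (from spectral to edges).} On $U=V\setminus B$ all weights are $(1\pm\eta)n^{-1/p}$. Edges meeting $B$ contribute at most $O(\eta)n^{r(1-1/p)}$ to $P_{\mathcal H}(\mathbf x)$, using the upper bound on $\max x_i$. Hence
\[
r!\,e(\mathcal H[U])(1+\eta)^r n^{-r/p}\ge \lambda^{(p)}(T_r(n,k))-(\delta+O(\eta))n^{r(1-1/p)}.
\]
This yields $e(\mathcal H[U])\ge t_r(|U|,k)-\delta_1|U|^r$ once $\delta,\eta$ are small. Since $\mathcal H[U]$ is $F^{(r)}$-free, edge stability makes it $\varepsilon'|U|^r$-close to $T_r(|U|,k)$.

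\textbf{Step 3 (putting back $B$).} Since $|B|\le\eta n$, adding the vertices of $B$ arbitrarily to the parts, and making their edges agree with the complete $k$-partite graph, changes at most $\eta n\binom{n}{r-1}$ edges. Rebalancing part sizes costs $o(n^r)$ because near-extremality forces parts of size $(1/k\pm o(1))n$. The total distance is therefore at most $\varepsilon\binom nr$ for suitable choices of $\varepsilon',\eta,\delta$.

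The main obstacle is Step 1: extracting quantitative concentration of $\mathbf x$ from near-equality. I would first try the cleanest route, a power-mean inequality. Consider $\sum_{e}\mathbf x^e$ with weights $y_i=x_i^p$ summing to 1, and apply the Lagrangian bound $\lambda^{(1)}\le \frac{(k)_r}{k^r}$ (asymptotically) to the vector $\mathbf x/\|\mathbf x\|_1$; this is valid after deleting $o(n^r)$ edges by supersaturation/removal. This gives $\lambda^{(p)}(\mathcal H)\le(\frac{(k)_r}{k^r}+o(1))\|\mathbf x\|_1^r$. The slack then forces $\|\mathbf x\|_1\ge(1-\eta')n^{1-1/p}$, and a reverse-H\"older stability estimate converts this into concentration of the $x_i$.
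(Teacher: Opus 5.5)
Your route differs from the paper's, and it works after one repair. The paper never looks at the eigenvector. It first deletes every edge containing a pair of codegree at most $O(n^{r-3})$, which makes $\mathcal H$ $\mathcal H_F^{(r)}$-free by greedy embedding. It then applies the graph removal lemma to the shadow to reach an $\mathcal H_{k+1}^{(r)}$-free graph. Both deletions remove $o(n^r)$ edges, so by Proposition~\ref{prop:weyl-inequality} and Lemma~\ref{lem:size-upper-bound} they lower $\lambda^{(p)}$ by only $o(n^{r(1-1/p)})$. The already known spectral stability for $\mathcal H_{k+1}^{(r)}$ (Lemma~\ref{lem:spectral-stability-general-clique}) is then applied as a black box.

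You need exactly the same cleaning, for two purposes: to make the Lagrangian bound $\lambda^{(1)}\le (k)_r/k^r$ available in Step 1, and to deduce edge stability for $F^{(r)}$ from Mubayi's result for $\mathcal H_{k+1}^{(r)}$. You should spell it out this way rather than as an unspecified ``supersaturation/removal''. Where you diverge is that, instead of citing spectral stability, you prove a direct spectral-to-edge transfer (Lagrangian bound plus H\"older stability), then apply edge stability on $U$ and reinsert $B$. Given Lemma~\ref{lem:spectral-stability-general-clique}, the paper's proof is shorter. Yours is self-contained modulo classical edge stability and in effect re-derives that lemma. Note that your $\delta$ depends on $p$ through the convexity constant; the paper's $\delta$ does too, implicitly.

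The step that fails as written is the bound $\max_i x_i=O(n^{-1/p})$. From $\lambda x_z^{p-1}\le (r-1)!\binom{n-1}{r-1}x_z^{r-1}$ you only get $x_z^{p-r}\le (r-1)!\binom{n-1}{r-1}/\lambda$. This bounds $x_z$ from above only when $p>r$. For $p=r$ it is vacuous, and for $1<p<r$ it is a lower bound on $x_z$ (that is how the paper uses this inequality in Lemma~\ref{lem:degree-of-u0}). Since the theorem covers all $p>1$ and Step 2 relies on this bound to make edges meeting $B$ negligible, you need another estimate.

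An easy fix is to bound the link sum by the full $(r-1)$-fold sum:
$\sum_{e\ni z}\mathbf x^{e\setminus\{z\}}\le \|\mathbf x\|_1^{r-1}/(r-1)!\le n^{(r-1)(1-1/p)}/(r-1)!$.
The eigen-equation then gives $\lambda x_z^{p-1}\le n^{(r-1)(1-1/p)}$. Together with $\lambda\ge c\,n^{r(1-1/p)}$, where $c=(k)_r/(2k^r)$, this yields $x_z\le c^{-1/(p-1)}n^{-1/p}$ for every $p>1$. For the concentration, put $y_i=n^{1/p}x_i$; then the identity $\sum_i\bigl(y_i^p-1-p(y_i-1)\bigr)=p\bigl(n-\sum_i y_i\bigr)\le p\eta' n$ makes the ``reverse H\"older'' step precise. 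With these two points, Steps 2 and 3 go through as you describe.
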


Our second main theorem determines the unique $r$-graph that maximizes
the $p$-spectral radius among all $n$-vertex $tK_{k+1}^{(r)}$-free $r$-graphs.

\begin{theorem}\label{tH}
Let $p \ge r \ge 3$, $k \ge r$, and $t \ge 1$ be integers.
For sufficiently large $n$, suppose that $\mathcal{H}$ has the maximum
$p$-spectral radius among all $n$-vertex $tK_{k+1}^{(r)}$-free $r$-graphs.
Then
\[
\mathcal H \cong K_{t-1}^{r} \,\vee\, T_r(n-t+1, k).
\]
\end{theorem}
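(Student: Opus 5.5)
We prove Theorem~\ref{tH} by induction on $t$. The base case $t=1$ is the result of Zheng, Li and Fan \cite{ZhengLiFan2024}: $T_r(n,k)$ is the unique $K_{k+1}^{(r)}$-free $r$-graph of maximum $p$-spectral radius. For $t\ge 2$ let $\mathcal H$ be extremal and let $\mathbf x$ be a nonnegative unit eigenvector. Write $\mathcal G:=K_{t-1}^{r}\vee T_r(n-t+1,k)$.

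\emph{Lower bound.} $\mathcal G$ is $tK_{k+1}^{(r)}$-free. Any copy of $K_{k+1}^{(r)}$ must use one of the $t-1$ join vertices, because $T_r(n-t+1,k)$ contains no $K_{k+1}^{(r)}$. Hence $t$ disjoint copies would need $t$ distinct join vertices. It follows that
\[
\lambda^{(p)}(\mathcal H)\ge\lambda^{(p)}(\mathcal G)\ge \lambda^{(p)}(T_r(n,k))-o(n^{r(1-1/p)}).
\]

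\emph{Removing a small vertex set.} $\mathcal H$ contains at most $t-1$ disjoint copies of $K_{k+1}^{(r)}$. Take a maximal family of disjoint copies and let $W$ be the union of their vertex sets, so $|W|\le (t-1)(k+1+\binom{k+1}{2}(r-2))$ and $\mathcal H-W$ is $K_{k+1}^{(r)}$-free. Deleting a bounded number of vertices changes $P_{\mathcal H}$ by at most $O(n^{r-1})$ times the relevant entries. I expect that bounding eigenvector entries by $x_v=O(n^{-1/p})$, using the eigenequation with $p\ge r$, gives
\[
\lambda^{(p)}(\mathcal H-W)\ge \lambda^{(p)}(T_r(n,k))-o(n^{r(1-1/p)}).
\]
Theorem~\ref{F+spectral-stability} with $F=K_{k+1}$ then shows that $\mathcal H$ is $\varepsilon n^r$-close to $T_r(n,k)$.

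\emph{Cleaning the structure.} Fix a partition $V_1,\dots,V_k$ maximizing the number of crossing edges. Let $L$ be the set of vertices whose link has $\Omega(n^{r-1})$ edges that are non-crossing with respect to the partition, or, equivalently via the eigenequation, vertices of ``high'' weight inside their own part. Using the eigenvector, I would show in turn:
\begin{enumerate}
\item $|L|$ is bounded.
\item Every vertex outside $L$ has $x_v=(1+o(1))\,x_{\text{typ}}$ and misses only $o(n^{r-1})$ crossing edges.
\item $\mathcal H-L$ is $k$-partite with respect to the $V_i$.
\end{enumerate}
For (iii), a non-crossing edge outside $L$ together with the almost-complete structure would greedily build a $K_{k+1}^{(r)}$ avoiding any prescribed bounded set. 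Repeating this gives $t$ disjoint copies once we have enough such edges or enough vertices of $L$. The same greedy embedding shows $|L|\le t-1$: each vertex of $L$ has many neighbours of large degree in every part, so it can be completed to a $K_{k+1}^{(r)}$ disjoint from the others, and $|L|\ge t$ would give $tK_{k+1}^{(r)}$. If $|L|\le t-1$ and some edge in $\mathcal H-L$ were non-crossing, that edge would supply a $t$-th copy.

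\emph{Concluding.} Hence $\mathcal H$ is a subgraph of $K_{s}^r\vee K^r_k(n_1,\dots,n_k)$ with $s=|L|\le t-1$, and this join is itself $tK_{k+1}^{(r)}$-free. By maximality $s=t-1$ and $\mathcal H$ equals the join, since adding edges strictly increases $\lambda^{(p)}$ for a positive eigenvector. It remains to show the parts are balanced. The comparison $\lambda^{(p)}(K^r_{t-1}\vee K_k^r(n_1,\dots,n_k))$ against moving a vertex from a larger to a smaller part should follow the standard $p$-spectral balancing argument: compare eigenequations for vertices in two parts and use $p\ge r$ convexity.

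\emph{Main obstacle.} I expect the hardest step to be the cleaning stage, namely controlling the eigenvector entries tightly enough ($x_v$ nearly equal on $V\setminus L$, with precise lower bounds) so that the local edge-swap comparisons are decided by second-order terms. The fallback is to count via $\lambda^{(\infty)}$-type degree arguments with the error terms $O(n^{r-2})$ made explicit.
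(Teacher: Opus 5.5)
\textbf{Verdict: genuine gap.} Your overall shape matches the paper's: stability, a partition maximizing $f_{\mathcal H}$, a bounded exceptional set, $k$-partiteness of the rest, then balancing as in Lemma~\ref{lem:balance-set}. The first step works but is roundabout. Theorem~\ref{F+spectral-stability} applies directly to $\mathcal H$ with $F=tK_{k+1}$, because $\chi(tK_{k+1})=k+1$ and $(tK_{k+1})^{(r)}=tK_{k+1}^{(r)}$. So no vertex deletion, no eigenvector bound and no induction on $t$ are needed.

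\textbf{First gap: deficient vertices.} The cleaning stage is the substance of the proof, and you only assert it. Your exceptional set (vertices with a large non-crossing link) corresponds to the paper's $W$. It does not capture \emph{deficient} vertices: those lying in $\Omega(n)$ sparse cross pairs, hence missing $\Omega(n^{r-1})$ crossing edges (the paper's $L$). Your claim (ii), that no vertex outside your $L$ is deficient, cannot follow from $tK_{k+1}^{(r)}$-freeness; it needs spectral maximality via a symmetrization. The paper does this in three steps:
\begin{itemize}
\item Lemma~\ref{lem:degree-vertex-in-L}: every deficient $u$ has $d_{\mathcal H}(u)<\binom{k-1}{r-1}(n/k)^{r-1}-2\ell$. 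The proof splits on whether $u$ has at least $h$ dominant partners in its own class, and uses optimality of the partition to get $e_{V_1}(u)\le 2e_{V_2}(u)$.
\item Lemma~\ref{lem:degree-of-u0}: the heaviest vertex $u_0$ outside $W\setminus L$ has larger degree and weight comparable to $\max_w x_w$.
\item Lemma~\ref{lem:L-emptyset}: replacing the link of $u$ by the clean (crossing, $(W\cup L)$-avoiding) part of the link of $u_0$ creates no $tK_{k+1}^{(r)}$ and strictly increases $\lambda^{(p)}$.
\end{itemize}
The same omission undermines your bound $|L|\le t-1$. A vertex with a large non-crossing link may still be deficient towards some class $V_j$, so it need not extend to a $K_{k+1}^{(r)}$. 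This is why the paper only proves $|W\setminus L|\le t-1$ and removes deficient vertices separately.

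\textbf{Second gap: step (iii) is circular when $s=|L|<t-1$.} One non-crossing edge in $\mathcal H-L$, together with the $s$ copies through $L$, gives only $s+1<t$ disjoint copies, so there is no contradiction. Yet you deduce $s=t-1$ only after (iii). A small number of bad edges cannot be excluded by freeness alone; the paper excludes them spectrally in Lemma~\ref{lem:W=t-1}:
\begin{itemize}
\item Fix a copy $\mathcal F_0$ of $(t-1)K_{k+1}^{(r)}$. Every bad edge avoiding $W$ must meet $V(\mathcal F_0)$, so there are only $O(n^{r-2})$ of them.
\item Trading them for the $\Omega(n^{r-1})$ missing $r$-sets through $u_0$ increases $\lambda^{(p)}$.
\end{itemize}
That trade relies on the near-uniformity $x_v\ge(1-n^{-1/2})x_{u_0}$ for $v\notin W$ (Lemma~\ref{lem:down-bound}). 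This is another swap argument, which you assert in (ii) without proof. Only once $|W|=t-1$ is established does a single bad edge yield a $t$-th copy (Lemma~\ref{lem:strong-independent-set}).
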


Note that for any fixed $r$-graph $\mathcal H$, the $p$-spectral radius $\lambda^{(p)} (\mathcal H)$
is continuous in $p$. Thus, by letting $p\to\infty$ in Theorem \ref{tH},
we directly obtain the following corollary.

\begin{corollary}
Let $\mathcal H$ be a $tK_{k+1}^{(r)}$-free $r$-graph on $n$ vertices.
Then $e(H) \leq e(K_{t-1}^{r} \,\vee\, T_r(n-t+1, k))$ for sufficiently large $n$, with equality if
and only if $H \cong K_{t-1}^{r} \,\vee\, T_r(n-t+1, k)$.

\end{corollary}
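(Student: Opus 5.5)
The corollary follows from Theorem \ref{tH} by a limiting argument as $p\to\infty$. The plan is to use the identity $\lambda^{(\infty)}(\mathcal H)=r!\,e(\mathcal H)$, or more precisely the limit $\lim_{p\to\infty}\lambda^{(p)}(\mathcal H)=r!\,e(\mathcal H)$ from \cite[Proposition 2.10]{Nikiforov2014}. Write $\mathcal G:=K_{t-1}^{r}\vee T_r(n-t+1,k)$. We need $n_0$ to be independent of $p$, which is the delicate point. Note that $\lambda^{(p)}(\mathcal H)\,n^{-r(1-1/p)}$ tends to $r!\,e(\mathcal H)/n^r$. For a fixed $n$, however, we cannot simply pass to the limit in ``for sufficiently large $n$ depending on $p$'' unless the threshold is uniform.

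To avoid this issue, I will run the argument directly at the level of edges. I will mirror the proof of Theorem \ref{tH} with $p=\infty$, i.e.\ with the edge count playing the role of $\lambda^{(p)}$; this is the classical Tur\'an setting, which is easier. The steps are as follows.

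(i) Let $\mathcal H$ be $tK_{k+1}^{(r)}$-free with $e(\mathcal H)\ge e(\mathcal G)$.

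(ii) Delete a maximum collection of vertex-disjoint copies of $K_{k+1}^{(r)}$. There are at most $t-1$ of them, spanning $O(1)$ vertices. The remainder is $K_{k+1}^{(r)}$-free with at least $t_r(n,k)-O(n^{r-1})$ edges. Mubayi's stability then gives that $\mathcal H$ is $o(n^r)$-close to $T_r(n,k)$; alternatively, take Theorem \ref{F+spectral-stability} at $p\to\infty$ after normalizing by $n^{r(1-1/p)}$.

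(iii) Fix a max $k$-partition $V_1,\dots,V_k$. Let $W$ be the set of vertices whose link has $\Omega(n^{r-1})$ edges inside a part, or which miss $\Omega(n^{r-1})$ crossing link edges; then $|W|=o(n)$. Using the disjoint-copy greedy embedding of $K_{k+1}^{(r)}$ (expanded vertices can be chosen freely from the large parts), show the following: if there were $t$ vertices of huge degree into every part, or $t$ vertex-disjoint ``bad'' edges inside parts, then one could embed $tK_{k+1}^{(r)}$. Hence at most $t-1$ vertices are exceptional, and all other edges are crossing.

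(iv) Count edges. Deleting the at most $t-1$ exceptional vertices leaves a $k$-partite graph, so
$e(\mathcal H)\le e(\mathcal G)$, with equality forcing the exceptional set to have exactly $t-1$ vertices, each joined to everything, and the partite remainder to be complete and balanced.

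The main obstacle is step (iii): showing that a vertex of high degree in the link restricted to a single part, or an edge inside a part, can be completed to a copy of $K_{k+1}^{(r)}$ disjoint from the other $t-1$ copies. I would handle this by greedy embedding into typical vertices of $V_i\setminus W$, whose common crossing neighbourhoods have size $\Omega(n)$. Alternatively, if the proof of Theorem \ref{tH} yields $n_0$ independent of $p$ (which is plausible, since all its estimates are normalized by $n^{r(1-1/p)}$ and are uniform for $p\ge r$), one can simply take the limit $p\to\infty$ in $\lambda^{(p)}(\mathcal H)\le\lambda^{(p)}(\mathcal G)$. In the equality case $e(\mathcal H)=e(\mathcal G)$, a small perturbation argument (comparing $\lambda^{(p)}$ for large $p$, where $\lambda^{(p)}(\mathcal H)=r!\,e(\mathcal H)n^{-r/p}(1+o(1))$) would then return us to the extremal structure. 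I would pursue the direct edge argument first.
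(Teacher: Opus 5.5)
Your fallback (let $p\to\infty$ in Theorem~\ref{tH}) is exactly the paper's one-line argument, and your misgivings about it are justified. The paper does not check that the threshold $n_0$ in Theorem~\ref{tH} is uniform in $p$; for example, the inequality $(1-1/\sqrt n)^{p-1}\le 1-(p-2)/\sqrt n$ used in Lemma~\ref{lem:down-bound} fails once $p>\sqrt n+2$. Also, a limit of strict inequalities only gives $e(\mathcal H)\le e(\mathcal G)$, with $\mathcal G=K_{t-1}^{r}\vee T_r(n-t+1,k)$, and not uniqueness. Your perturbation remark does not repair this. The leading term $r!\,e(\mathcal H)n^{-r/p}(1+o(1))$ is the same for any two hypergraphs with equally many edges, and the first correction in $1/p$ depends on the degree sequences, so comparing it with $\mathcal G$ is a new problem. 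Your primary route, redoing stability plus cleaning directly for edges in Pikhurko's style, is therefore genuinely different from the paper and is the right way to get the equality case. As sketched, however, steps (iii)--(iv) have a gap.

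In (iii) you put two different kinds of vertices into $W$: those whose link has $\Omega(n^{r-1})$ edges inside a part, and those missing $\Omega(n^{r-1})$ crossing link edges. You then conclude from the greedy embedding that there are at most $t-1$ exceptional vertices and that all other edges are crossing. The embedding controls only the first kind. A vertex of low crossing degree need not lie in any copy of $K_{k+1}^{(r)}$. Moreover, a bad edge whose same-part pair contains such a vertex cannot be completed greedily, because completion needs both vertices of the pair to have high codegree with almost all of every other part. So low-degree vertices, and bad edges through them, are uncontrolled, and the count in (iv) is unjustified.

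You need a separate step that eliminates them. One option is the deletion trick. Repeatedly delete a vertex of degree below $\delta(\mathcal G_m)-\gamma m^{r-1}$, where $\mathcal G_m=K_{t-1}^{r}\vee T_r(m-t+1,k)$. Each deletion raises $e-e(\mathcal G_m)$ by at least $\gamma m^{r-1}$, which the asymptotic bound permits for only $o(n)$ steps. The exact argument on the remaining high-minimum-degree hypergraph then shows that no deletion happened. The other option, for an edge-extremal $\mathcal H$, is a switching argument as in Lemma~\ref{lem:L-emptyset}: replace the edges at a low-degree vertex by copies of the good crossing edges of a typical vertex, and check that $tK_{k+1}^{(r)}$-freeness survives.

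There is a second problem. Even for the first kind, ``all other edges are crossing'' does not follow when $s:=|W|<t-1$. Embedding only shows that the bad edges avoiding $W$ contain no $t-s$ pairwise disjoint ones, so they all meet a bounded vertex set. You then need a counting step as in Lemma~\ref{lem:W=t-1}. These bad edges number at most a small multiple of $n^{r-1}$. That is less than the $\Omega(n^{r-1})$ edges lost by each of the $t-1-s$ missing universal vertices, so $e(\mathcal H)<e(\mathcal G)$. Only for $s=t-1$ does embedding force every edge avoiding $W$ to be crossing.
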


\section{Preliminaries}

In this section we introduce the notation used throughout the paper and compile several auxiliary results needed for the proofs of our main theorems.

Let $\mathcal{H} = (V, E)$ be an $r$-graph, and let $U \subseteq V$ be a subset of vertices. The \emph{vertex-induced subhypergraph} of $\mathcal{H}$ on $U$ is defined as the $r$-graph
$\mathcal{H}[U] = \bigl(U,\ \{e \in E:  e \subseteq U\}\bigr)$,
where the edge set consists of all edges of $\mathcal{H}$ that are entirely contained in $U$.
Let $F\subseteq E$   be a nonempty subset of edges of $\mathcal{H}$. The \emph{edge-induced subhypergraph} of  $\mathcal{H}$  by $F$   is the $r$-graph $\mathcal{H}[F]=(V(F), F),$  where    $V(F)=\bigcup_{e\in F} e$
is the set of vertices contained in edges of   $F$.
The \emph{shadow graph} of an $r$-graph $\mathcal{H}$, written $\partial(\mathcal{H})$, is the graph with
\[
V(\partial(\mathcal{H}))=V(\mathcal{H})
\quad\text{and}\quad
E(\partial(\mathcal{H}))=\{f:\ f\subseteq e \text{ for some } e\in\mathcal{H}\}.
\]
If two $r$-graphs $\mathcal{H}_1=(V_1,E_1)$ and $\mathcal{H}_2=(V_2,E_2)$ have disjoint vertex sets, their disjoint \emph{union} is the hypergraph with vertex set $V_1\cup V_2$ and edge set $E_1\cup E_2$. We denote by $t\mathcal{H}$ the disjoint union of $t$ copies of $\mathcal{H}$.
The \emph{join} $\mathcal{H}_1\vee\mathcal{H}_2$ is obtained from the disjoint union of $\mathcal{H}_1$ and $\mathcal{H}_2$
by adding all $r$-edges that intersect both $V_1$ and $V_2$.
For two $r$-graphs $\mathcal{H}_1=(V,E_1)$ and $\mathcal{H}_2=(V,E_2)$ on the same vertex set, we define their sum $\mathcal{H}_1+\mathcal{H}_2$ as the $r$-graph with vertex set $V$ and edge set $E_1\cup E_2$.
For convenience, we write $\mathcal{H}_1+f$ instead of $\mathcal{H}_1+\mathcal{H}_2$ if $E_2=\{f\}$.

We first recall a subadditivity inequality due to Nikiforov \cite{Nikiforov2014} for the $p$-spectral radius.

\begin{proposition}[\cite{Nikiforov2014}]\label{prop:weyl-inequality}
Let $\mathcal{H}_1$ and $\mathcal{H}_2$ be edge-disjoint $r$-graphs on the same vertex set. Then
\[
\lambda^{(p)}(\mathcal{H}_1+\mathcal{H}_2)
 \le \lambda^{(p)}(\mathcal{H}_1)+\lambda^{(p)}(\mathcal{H}_2).
\]
\end{proposition}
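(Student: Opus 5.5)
The plan is to argue directly from the variational definition \eqref{eq:definition-p-spectral-radius}. The one structural fact needed is that the polynomial form $P_{\mathcal H}$ is additive over edge-disjoint sums. Since $\mathcal{H}_1$ and $\mathcal{H}_2$ are edge-disjoint and live on the same vertex set $V$, the edge set of $\mathcal{H}_1+\mathcal{H}_2$ is the disjoint union $E_1\sqcup E_2$. Hence, for every $\mathbf{x}\in\mathbb{R}^{|V|}$,
\[
P_{\mathcal{H}_1+\mathcal{H}_2}(\mathbf{x})
= r!\sum_{e\in E_1}\mathbf{x}^e + r!\sum_{e\in E_2}\mathbf{x}^e
= P_{\mathcal{H}_1}(\mathbf{x})+P_{\mathcal{H}_2}(\mathbf{x}).
\]
Edge-disjointness is exactly what prevents an edge from being counted twice; otherwise the left side would be smaller than the right.

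Next, fix a vector $\mathbf{x}$ with $\|\mathbf{x}\|_p=1$ attaining $\lambda^{(p)}(\mathcal{H}_1+\mathcal{H}_2)=P_{\mathcal{H}_1+\mathcal{H}_2}(\mathbf{x})$. Such a vector exists because the unit $\ell_p$-sphere is compact and $P$ is continuous, and it may be taken nonnegative, as noted after \eqref{eq:definition-p-spectral-radius}. This same $\mathbf{x}$ is an admissible competitor in the maximization defining $\lambda^{(p)}(\mathcal{H}_1)$ and in the one defining $\lambda^{(p)}(\mathcal{H}_2)$, because all three hypergraphs share the vertex set $V$. Therefore $P_{\mathcal{H}_i}(\mathbf{x})\le \lambda^{(p)}(\mathcal{H}_i)$ for $i=1,2$. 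Adding these two inequalities and using the identity above gives
\[
\lambda^{(p)}(\mathcal{H}_1+\mathcal{H}_2)\le\lambda^{(p)}(\mathcal{H}_1)+\lambda^{(p)}(\mathcal{H}_2).
\]

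There is no real obstacle here. The only points to check are that both summands are evaluated on the same ambient coordinate space, which is guaranteed by the common vertex set, and that the edge sets do not overlap. The argument works for every $p\ge 1$ and needs neither the eigen-equation \eqref{eq:eigenequation} nor nonnegativity of $\mathbf{x}$.
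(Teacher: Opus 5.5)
Your proof is correct and is the standard argument: edge-disjointness makes $P_{\mathcal{H}_1+\mathcal{H}_2}=P_{\mathcal{H}_1}+P_{\mathcal{H}_2}$, and evaluating both pieces at a common unit maximizer gives the bound. The paper does not prove this itself but cites Nikiforov, whose argument is exactly this one; as your side remark suggests, if you take the maximizer nonnegative then $P_{\mathcal{H}_1+\mathcal{H}_2}(\mathbf{x})\le P_{\mathcal{H}_1}(\mathbf{x})+P_{\mathcal{H}_2}(\mathbf{x})$ holds even when edges overlap, so edge-disjointness is convenient rather than essential.
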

The following bound generalizes the classical inequality $\lambda^{(2)}(G)\le\sqrt{2e(G)}$ to uniform hypergraphs.

\begin{lemma}[\cite{Keevash-Lenz-Mubayi2014}]\label{lem:size-upper-bound}
Let $p>1$ and let $\mathcal{H}$ be an $r$-graph with $m$ edges. Then
\[
\lambda^{(p)}(\mathcal{H})\le (r!\,m)^{1-1/p}.
\]
\end{lemma}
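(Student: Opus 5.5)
The plan is a direct two-step estimate: apply Hölder's inequality over the edge set, then compare the resulting sum with a power of $\|\mathbf{x}\|_p^p$ by a multinomial expansion. By the definition \eqref{eq:definition-p-spectral-radius} and the remark that a nonnegative eigenvector always exists, it suffices to show that
\[
r!\sum_{e\in E(\mathcal{H})}\mathbf{x}^e\le (r!\,m)^{1-1/p}
\quad\text{for every nonnegative } \mathbf{x} \text{ with } \|\mathbf{x}\|_p=1 .
\]

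\textbf{Step 1 (Hölder over edges).} Regard $(\mathbf{x}^e)_{e\in E(\mathcal{H})}$ as a vector with $m$ coordinates. Pair it with the all-ones vector and apply Hölder's inequality with exponents $p$ and $p/(p-1)$, which is valid since $p>1$. This gives
\[
\sum_{e\in E(\mathcal{H})}\mathbf{x}^e\le m^{1-1/p}\Big(\sum_{e\in E(\mathcal{H})}(\mathbf{x}^e)^p\Big)^{1/p}.
\]

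\textbf{Step 2 (multinomial bound).} Set $y_v=x_v^p\ge 0$, so that $\sum_v y_v=1$ and $(\mathbf{x}^e)^p=\prod_{v\in e}y_v$. Expand $(\sum_v y_v)^r$. Every $r$-subset $e$ of $V$ contributes the monomial $\prod_{v\in e}y_v$ exactly $r!$ times, once for each ordering of its elements. All remaining terms are nonnegative. Since $E(\mathcal{H})$ consists of some of these $r$-subsets, we get
\[
\sum_{e\in E(\mathcal{H})}\prod_{v\in e}y_v\le \sum_{e\in\binom{V}{r}}\prod_{v\in e}y_v\le \frac{1}{r!}\Big(\sum_v y_v\Big)^r=\frac{1}{r!}.
\]

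\textbf{Step 3 (combine).} Inserting the bound of Step 2 into Step 1 yields
\[
P_{\mathcal{H}}(\mathbf{x})\le r!\, m^{1-1/p}(r!)^{-1/p}=(r!\,m)^{1-1/p}.
\]
Taking the maximum over all such $\mathbf{x}$ gives the claimed bound on $\lambda^{(p)}(\mathcal{H})$.

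There is no real obstacle here. The only point that needs care is Step 2: one must use that edges are sets of $r$ distinct vertices, so that each edge monomial appears with multiplicity exactly $r!$ in the expansion of $(\sum_v y_v)^r$.
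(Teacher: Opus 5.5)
Your proof is correct and is essentially the standard argument. The paper does not reprove this lemma but cites Keevash--Lenz--Mubayi, where it is proved the same way: Hölder's inequality over the $m$ edges, then the bound $r!\sum_{e}\prod_{v\in e}x_v^p\le\bigl(\sum_v x_v^p\bigr)^r=1$, i.e.\ that the Lagrangian of the complete $r$-graph is at most $1$. Your reduction to nonnegative $\mathbf{x}$ is also justified, since $P_{\mathcal{H}}(\mathbf{x})\le P_{\mathcal{H}}(|\mathbf{x}|)$.
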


For convenience, we record the following elementary intersection inequality.

\begin{lemma}[\cite{CioabaFengTaitZhang2020}]\label{capset}
If $A_1,\dots,A_p$ are finite sets, then
\[
|A_1\cap\cdots\cap A_p|
 \ge \sum_{i=1}^p |A_i|-(p-1)\Bigl|\bigcup_{i=1}^p A_i\Bigr|.
\]
\end{lemma}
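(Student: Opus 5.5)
The plan is to pass to complements inside the ambient set $U=\bigcup_{i=1}^p A_i$ and apply the union bound. Everything is finite, so each step is a direct count.

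First, by De Morgan's law,
\[
U\setminus\bigcap_{i=1}^p A_i=\bigcup_{i=1}^p (U\setminus A_i).
\]
Since each $A_i\subseteq U$, we have $|U\setminus A_i|=|U|-|A_i|$, and likewise $\bigl|U\setminus\bigcap_i A_i\bigr|=|U|-\bigl|\bigcap_i A_i\bigr|$.

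Second, applying the union bound (subadditivity of cardinality) to the right-hand side gives
\[
|U|-\Bigl|\bigcap_{i=1}^p A_i\Bigr|\le \sum_{i=1}^p\bigl(|U|-|A_i|\bigr)=p|U|-\sum_{i=1}^p|A_i|.
\]
Rearranging yields $\bigl|\bigcap_i A_i\bigr|\ge \sum_i|A_i|-(p-1)|U|$, which is exactly the claimed inequality.

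There is no real obstacle. The only point to check is that every set is measured against the common ambient set $U$, so that the complements have the stated sizes; this holds by the choice of $U$ as the union.

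An alternative route is induction on $p$. The case $p=2$ is the identity $|A_1\cap A_2|=|A_1|+|A_2|-|A_1\cup A_2|$. For the inductive step, apply the two-set case to $A_1\cap\cdots\cap A_{p-1}$ and $A_p$, bounding the union of these two sets by $|U|$, and then use the induction hypothesis together with $\bigl|\bigcup_{i<p}A_i\bigr|\le |U|$. The complement argument is shorter, so I would use that one.
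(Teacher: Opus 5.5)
Your complement argument is correct and complete. Every $A_i$ and $\bigcap_i A_i$ lies in $U=\bigcup_i A_i$, so the complements have sizes exactly $|U|-|A_i|$ and $|U|-|\bigcap_i A_i|$. The union bound applied to $\bigcup_i (U\setminus A_i)$ then gives the inequality after rearranging.

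The paper gives no proof of its own; it quotes the lemma from Cioab\u{a}, Feng, Tait and Zhang, so there is nothing further to compare against. Your induction alternative is also valid, because the coefficient $p-2$ in front of $|\bigcup_{i<p}A_i|$ is nonnegative, which lets you replace that union by $U$.
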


Let $G$ be a simple graph with matching number $\beta(G)$ and maximum degree $\Delta(G)$. For two integers $\beta$ and $\Delta$, define $f(\beta,\Delta)=\max\{e(G): \beta(G)\leq \beta,\Delta(G)\leq \Delta \}$. We next recall a result due to   Chv\'atal and Hanson \cite{ChvatalHanson} concerning graphs with bounded matching number and maximum degree.

\begin{lemma}[\cite{ChvatalHanson}]\label{matching-delta-function}
For integers $\beta\ge1$ and $\Delta\ge1$,
\[
f(\beta,\Delta)
  =\Delta\beta
   +\left\lfloor\frac{\Delta}{2}\right\rfloor
    \left\lfloor\frac{\beta}{\lceil\Delta/2\rceil}\right\rfloor
  \le \Delta\beta+\beta.
\]
\end{lemma}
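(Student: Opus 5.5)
The statement immediately above is Lemma~\ref{matching-delta-function}, the Chv\'atal--Hanson formula. I would not re-derive the exact value of $f(\beta,\Delta)$, which is the content of \cite{ChvatalHanson}. I would only justify the upper bound $f(\beta,\Delta)\le \Delta\beta+\beta$, since that is the form used later. There are two routes: deduce it from the exact formula, or prove it directly.

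From the exact formula, the bound is elementary. The plan is to show
\[
\left\lfloor\frac{\Delta}{2}\right\rfloor\left\lfloor\frac{\beta}{\lceil\Delta/2\rceil}\right\rfloor\le \beta .
\]
Since $\lfloor \Delta/2\rfloor\le\lceil\Delta/2\rceil$, we have
\[
\left\lfloor\frac{\Delta}{2}\right\rfloor\left\lfloor\frac{\beta}{\lceil\Delta/2\rceil}\right\rfloor
\le \lceil\Delta/2\rceil\cdot\frac{\beta}{\lceil\Delta/2\rceil}=\beta,
\]
and adding $\Delta\beta$ gives the claim.

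For a self-contained proof of the weaker bound, I would use the Gallai--Edmonds/Tutte--Berge structure. Take a graph $G$ with $\beta(G)\le\beta$ and $\Delta(G)\le\Delta$. By the Tutte--Berge formula, there is a set $S$ such that
\[
\beta(G)=\tfrac12\bigl(n+|S|-\mathrm{odd}(G-S)\bigr).
\]
The edges then split into two kinds. First, edges touching $S$ number at most $\Delta|S|$. Second, there are edges inside the components of $G-S$.

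For a component $C$ with $|C|=2c+1$, the number of edges is at most
\[
\min\Bigl\{\tbinom{2c+1}{2},\ \tfrac{\Delta(2c+1)}{2}\Bigr\}.
\]
I would bound each of these by $c\Delta+c$. The quantity $\binom{2c+1}{2}=c(2c+1)$ is at most $c(\Delta+1)$ when $2c\le\Delta$. Otherwise $\Delta(2c+1)/2\le c\Delta+c$ holds, because $\Delta/2\le c$. Even components are handled similarly; the Gallai--Edmonds decomposition lets me assume all components of $G-S$ are factor-critical, hence odd.

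Summing, and using $\beta=|S|+\sum_C c_C$, gives
\[
e(G)\le \Delta|S|+\sum_C c_C(\Delta+1)\le(\Delta+1)\beta .
\]

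The only delicate step is setting up the Gallai--Edmonds structure so that every vertex is accounted for. The exact formula itself is cited rather than reproved.
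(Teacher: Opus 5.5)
Your first route matches the paper. The paper gives no proof and simply cites Chv\'atal--Hanson, and the inequality $f(\beta,\Delta)\le\Delta\beta+\beta$ is exactly the one-line consequence $\lfloor\Delta/2\rfloor\lfloor\beta/\lceil\Delta/2\rceil\rfloor\le\beta$ that you write down.

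Your Tutte--Berge argument is a genuinely different route. It is a self-contained proof of the weaker bound, and that weaker bound is the only form the paper uses later (in Lemma~\ref{independentset}). It gives up the sharp value of $f(\beta,\Delta)$ in exchange for not relying on the Chv\'atal--Hanson theorem. The argument is correct, apart from one inaccurate remark:
\begin{itemize}
\item If $S=A(G)$ is the Gallai--Edmonds set, $G-S$ generally does have even components, namely those of $G[C(G)]$, which have perfect matchings. So Gallai--Edmonds does not let you assume every component is factor-critical. A maximal barrier would have that property, but you need neither.
\item For any minimizing $S$, the identity $\beta(G)=|S|+\sum_{C\text{ odd}}\frac{|C|-1}{2}+\sum_{C\text{ even}}\frac{|C|}{2}$ follows from the Tutte--Berge formula by pure algebra.
\item An even component with $|C|=2c$ has at most $\Delta|C|/2=c\Delta\le c(\Delta+1)$ edges, by the degree bound alone.
\end{itemize}
So the step you flag as delicate is actually routine, and your summation goes through as written.
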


We will use  the graph removal lemma  due to R{\"o}dl and Skokan \cite{RodlSkokan2006}.

\begin{lemma}[\cite{RodlSkokan2006}]\label{removal-lemma}
For every $\varepsilon>0$ and every graph $F$ with $\chi(F)=k+1$,
there exists $n_0=n_0(\varepsilon,F)$ such that any $F$-free graph $G$ on $n>n_0$ vertices
can be made $K_{k+1}$-free by removing at most $\varepsilon n^2$ edges.
\end{lemma}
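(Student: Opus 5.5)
The plan is to derive the statement from Szemerédi's regularity lemma combined with the embedding (key) lemma. Fix $\varepsilon>0$ and the graph $F$ with $\chi(F)=k+1$, and put $f=|V(F)|$. Since $F$ is $(k+1)$-colourable, $F$ is a subgraph of the complete $(k+1)$-partite graph $K_{k+1}(f)$ with all parts of size $f$. So it suffices to show that after deleting few edges, any surviving copy of $K_{k+1}$ forces a copy of $K_{k+1}(f)$ in $G$, which is impossible because $G$ is $F$-free.

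\textbf{Step 1: regularity and cleaning.} Choose a density threshold $d=\varepsilon/4$. By the embedding lemma for $K_{k+1}(f)$ at density $d$, there is a regularity parameter $\eta\le \min\{\varepsilon/4,\eta_0(d,k,f)\}$ with the following property: whenever $k+1$ clusters are pairwise $\eta$-regular with density at least $d$, they contain a copy of $K_{k+1}(f)$, provided the clusters are large enough. Apply the regularity lemma to $G$ with parameter $\eta$ and lower bound $m_0\ge 1/\eta$ on the number of parts. This gives an equipartition $V_0\cup V_1\cup\dots\cup V_M$ with $|V_0|\le \eta n$ and $M\le M(\eta)$. Now delete the following edges:
\begin{itemize}
\item edges meeting $V_0$, at most $\eta n^2$ of them;
\item edges inside some $V_i$, at most $n^2/M\le \eta n^2$;
\item edges in irregular pairs, at most $\eta M^2 (n/M)^2=\eta n^2$;
\item edges in pairs of density below $d$, at most $d n^2/2$.
\end{itemize}
The total is at most $3\eta n^2+dn^2/2<\varepsilon n^2$ with the chosen constants.

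\textbf{Step 2: contradiction.} Let $G'$ be the cleaned graph and suppose it still contains a copy of $K_{k+1}$. Each of its vertices lies in some cluster $V_i$ with $i\ge 1$. No two of these vertices share a cluster, since all edges inside clusters were deleted. Every pair among the $k+1$ clusters carries an edge of $G'$, so each such pair is $\eta$-regular of density at least $d$ in $G$. The clusters have size at least $(1-\eta)n/M(\eta)$, which tends to infinity with $n$. Choosing $n_0$ large enough, the embedding lemma therefore yields $K_{k+1}(f)\subseteq G$, hence $F\subseteq G$, a contradiction. Thus $G'$ is $K_{k+1}$-free.

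\textbf{Main obstacle.} The only delicate point is the order in which constants are chosen. First fix $d$ in terms of $\varepsilon$. Then fix $\eta$ small enough for both the cleaning budget and the embedding lemma with parameters $(d,k+1,f)$. Then $M(\eta)$ is determined by the regularity lemma, and finally $n_0$ is taken large enough that the clusters exceed the size the embedding lemma requires. With this order, $n_0$ depends only on $\varepsilon$ and $F$, as the statement asserts.
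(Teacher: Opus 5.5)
Your proof is correct. The cleaning budget checks out: with $d=\varepsilon/4$, $\eta\le\varepsilon/4$ and $m_0\ge 1/\eta$, the deleted edges number at most $3\eta n^2+dn^2/2\le\tfrac78\varepsilon n^2$. A surviving $K_{k+1}$ must have its vertices in $k+1$ distinct clusters $V_i$ with $i\ge 1$. Between two clusters, $G'$ keeps either all of the edges of $G$ or none of them. So all $\binom{k+1}{2}$ cluster pairs are $\eta$-regular of density at least $d$ in $G$. The key lemma, applied with $R=K_{k+1}$ and $F\subseteq R(f)$, then gives $F\subseteq G$ once the clusters are large. Your order of constants ($d$, then $\eta$, then $M(\eta)$, then $n_0$) is right, so $n_0$ depends only on $\varepsilon$ and $F$.

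The paper gives no proof of this lemma; it simply quotes it from R\"odl--Skokan. Your write-up is therefore a self-contained derivation rather than a reproduction. The route usually behind such a citation is a little different. One first applies the removal lemma for $K_{k+1}$: if $G$ cannot be made $K_{k+1}$-free by deleting $\varepsilon n^2$ edges, then $G$ contains $\Omega(n^{k+1})$ copies of $K_{k+1}$. One then applies Erd\H{o}s's theorem to the $(k+1)$-graph of these cliques. That theorem says a $(k+1)$-graph with $\Omega(n^{k+1})$ edges contains the complete $(k+1)$-partite $(k+1)$-graph with parts of size $f$. This yields $K_{k+1}(f)\supseteq F$ in $G$.

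Your argument avoids the counting and supersaturation step by embedding $K_{k+1}(f)$ directly into the regular $(k+1)$-tuple of clusters, at the price of invoking the embedding lemma. It also gives slightly more structural information: the deleted edge set can be taken to be a union of whole cluster pairs plus $O(\eta n^2)$ exceptional edges.
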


A Perron--Frobenius type theorem for the $p$-spectral radius, due to Nikiforov \cite{Nikiforov2014},
will  be needed.

\begin{theorem}[\cite{Nikiforov2014}]\label{PFThm}
Let $\mathcal{H}$ be an $r$-graph and let $\mathbf{x}$ be an eigenvector corresponding to $\lambda^{(p)}(\mathcal{H})$.
\begin{itemize}
    \item[(1)] If $p>r-1$ and $\mathcal{H}$ is connected, then $x_i>0$ for all $i\in[n]$.
    \item[(2)] If $p>r$, then $x_i>0$ for each non-isolated vertex of $\mathcal{H}$.
\end{itemize}
\end{theorem}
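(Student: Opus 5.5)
The plan is a local perturbation argument. I take a nonnegative eigenvector $\mathbf{x}$ with $\|\mathbf{x}\|_p=1$ and $P_{\mathcal{H}}(\mathbf{x})=\lambda:=\lambda^{(p)}(\mathcal{H})$, and assume some coordinate vanishes. I will then build a competitor vector $\mathbf{y}$ with $\|\mathbf{y}\|_p=1$ and $P_{\mathcal{H}}(\mathbf{y})>\lambda$, which contradicts the definition \eqref{eq:definition-p-spectral-radius}. Write $Z=\{i: x_i=0\}$ and $S=\{i:x_i>0\}$; $S\neq\emptyset$ since $\|\mathbf{x}\|_p=1$.

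\emph{Finding an edge.} For (1), $Z\neq\emptyset$ and $S\ne\emptyset$ by assumption, so connectivity of $\mathcal{H}$ yields an edge $e$ with $e\cap Z\neq\emptyset$ and $e\cap S\ne\emptyset$. Set $m=|e\cap Z|$; then $1\le m\le r-1<p$. For (2), let $v\in Z$ be non-isolated and take any edge $e\ni v$. Either $e\cap S\neq\emptyset$, and then again $m\le r-1$, or $e\subseteq Z$, and then $m=r<p$. In every case $m<p$, and $c:=\prod_{u\in e\cap S}x_u>0$, with the empty product equal to $1$.

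\emph{Perturbation.} For small $t>0$, let $\mathbf{z}$ agree with $\mathbf{x}$ except that $z_u=t$ for $u\in e\cap Z$.
\begin{itemize}
\item Since all coordinates are nonnegative and every edge term is monotone, $P_{\mathcal{H}}(\mathbf{z})\ge \lambda + r!\,c\,t^{m}$; the edge $e$ alone contributes this new term.
\item $\|\mathbf{z}\|_p^p=1+mt^p$.
\item Put $\mathbf{y}=\mathbf{z}/\|\mathbf{z}\|_p$. By $r$-homogeneity of $P_{\mathcal{H}}$,
\[
P_{\mathcal{H}}(\mathbf{y})\ge \bigl(\lambda+r!\,c\,t^m\bigr)(1+mt^p)^{-r/p}\ge \bigl(\lambda+r!\,c\,t^m\bigr)\bigl(1-\tfrac{rm}{p}t^p\bigr).
\]
\end{itemize}
The right-hand side equals $\lambda + r!\,c\,t^m - O(t^p)$, with the implied constant depending only on $\lambda,r,m,p,c$. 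Because $m<p$, the positive term $t^m$ dominates as $t\to0^+$, so $P_{\mathcal{H}}(\mathbf{y})>\lambda$ for small $t$. This is the desired contradiction.

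\emph{Main obstacle.} The key step is ensuring that the number $m$ of simultaneously raised zero coordinates in a single edge is strictly less than $p$. This is exactly where the hypotheses enter: connectivity together with $p>r-1$ handles edges meeting both $S$ and $Z$, and $p>r$ is needed for (2) because the chosen edge might lie entirely inside $Z$. Raising all zero vertices of one edge at once avoids needing an edge with a single zero vertex. No use of the eigenequation \eqref{eq:eigenequation} is required; the argument works directly with the variational definition.
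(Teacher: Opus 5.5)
Your proof is correct and complete. The paper gives no proof of Theorem~\ref{PFThm}; it quotes the result from Nikiforov~\cite{Nikiforov2014}, so there is no in-paper argument to compare yours against.

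Your perturbation is a self-contained variational proof. Raising all $m$ zero coordinates of a single edge $e$ to $t$ gains at least $r!\,c\,t^{m}$ in $P_{\mathcal{H}}$. Renormalizing costs only $O(t^{p})$. The two hypotheses are used exactly to force $m<p$:
\begin{itemize}
\item in (1), connectivity supplies an edge meeting both $S$ and $Z$, so $m\le r-1<p$;
\item in (2), the worst case $e\subseteq Z$ gives $m=r<p$.
\end{itemize}
The individual steps are all valid: monotonicity of $P_{\mathcal{H}}$ on nonnegative vectors, $\|\mathbf{z}\|_p^p=1+mt^p$, $r$-homogeneity, and $(1+s)^{-r/p}\ge 1-\tfrac{r}{p}s$ by convexity. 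The argument also covers $\lambda=0$.

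A useful feature of your route is that it proves (2) in the same stroke as (1). A component-by-component proof of (2) would instead need extra facts about how $\lambda^{(p)}$ and its eigenvectors split over a disjoint union when $p>r$. Your version also makes visible why the strict inequalities are needed: at $p=r-1$, resp.\ $p=r$, the gain $t^m$ and the loss $t^p$ can be of the same order.

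The one point worth stating explicitly is that the theorem must be read for nonnegative eigenvectors. For even $r$, $-\mathbf{x}$ is also a maximizer, so the literal statement fails for signed eigenvectors. Nonnegativity is the paper's standing convention and exactly what you assumed. If you want to handle a signed maximizer $\mathbf{x}$ as well, note that $|\mathbf{x}|$ is again a maximizer, so your argument shows $\mathbf{x}$ has no zero entries in case (1).
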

We also need two inequalities given by Kang, Nikiforov, and Yuan \cite{Kang-Nikiforov-Yuan2014}.
\begin{lemma}[{\cite{Kang-Nikiforov-Yuan2014}}]\label{lem:twoinequalities}
	Let $n_1, \cdots,n_k$ be $k$ integers, $y_1,\cdots,y_k$ be $k$ positive reals and $p>\frac{9}{8}$.
	If $n_1\ge \cdots\ge n_k$, $n_1y_1^p+\cdots+n_ky_k^p=1$ and $n_1\ge n_k+2$,
	then $n_1y_1+n_ky_k\le (n_1+n_k)y^*$ and
$n_1n_ky_1y_k<\left\lceil\frac{n_1+n_k}{2}\right\rceil \left\lfloor\frac{n_1+n_k}{2}\right\rfloor (y^*)^2$, where $y^*=\left(\frac{n_1y_1^p+n_ky_k^p}{n_1+n_k}\right)^{\frac{1}{p}}$.
\end{lemma}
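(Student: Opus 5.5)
Both inequalities follow from elementary mean inequalities; only the ratio $y_1/y_k$ is free. Write $a=n_1$, $b=n_k$, $s=a+b$, $u=y_1^p$, $v=y_k^p$, so that $(y^*)^p=(au+bv)/s$. The normalization $n_1y_1^p+\cdots+n_ky_k^p=1$ plays no role beyond fixing the scale.

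\emph{First inequality.} We have $\frac{a y_1+b y_k}{s}\le\bigl(\frac{a y_1^p+b y_k^p}{s}\bigr)^{1/p}=y^*$. This is the weighted power mean inequality with weights $a/s$ and $b/s$ and exponent $p>1$, or equivalently Jensen's inequality for the convex map $t\mapsto t^p$. Multiplying by $s$ gives $n_1y_1+n_ky_k\le (n_1+n_k)y^*$.

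\emph{Second inequality: reduction.} Let $m=\lceil s/2\rceil\lfloor s/2\rfloor$. By AM--GM, $au+bv\ge 2\sqrt{ab\,uv}$. Hence
\[
(y^*)^2=\Bigl(\frac{au+bv}{s}\Bigr)^{2/p}\ge\Bigl(\frac{4ab\,uv}{s^2}\Bigr)^{1/p}=\Bigl(\frac{4ab}{s^2}\Bigr)^{1/p}y_1y_k .
\]
Since $y_1y_k>0$, it suffices to prove the purely numerical inequality
\[
(ab)^{1-1/p}\Bigl(\frac{s^2}{4}\Bigr)^{1/p}<m .
\]
The AM--GM step may be an equality, so all strictness must come from this inequality.

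\emph{Second inequality: the numerical bound.} I split by the parity of $s$.
\begin{itemize}
\item If $s$ is even, then $m=s^2/4$. The inequality reads $(ab)^{1-1/p}<(s^2/4)^{1-1/p}$, which holds because $ab<s^2/4$ when $a\ne b$.
\item If $s$ is odd, then $a-b$ is odd and at least $2$, so $a-b\ge3$. Hence $ab=\frac{s^2-(a-b)^2}{4}\le\frac{s^2-9}{4}$, and $m=\frac{s^2-1}{4}$. Put $X=s^2$ and $q=1-1/p\in(0,1)$. It suffices to show $(1-9/X)^q<1-1/X$.
\end{itemize}

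\emph{The odd case, where the hypothesis on $p$ enters.} Bernoulli's inequality for exponents in $(0,1)$ gives $(1-x)^q\le 1-qx$ for $x\in[0,1]$. Here $X\ge 25$, since $b\ge1$ and $a\ge b+3$, so $x=9/X\le 1$. Thus $(1-9/X)^q\le 1-9q/X$, and $1-9q/X<1-1/X$ exactly when $9q>1$, i.e.\ $p>9/8$. This explains the hypothesis $p>\frac98$.

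The only delicate point is this odd-parity case, and in particular noticing that oddness forces $a-b\ge3$ rather than $a-b\ge2$. The cruder bound $a-b\ge 2$ only yields the condition $p>4/3$.
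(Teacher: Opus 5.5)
Your proof is correct and complete. The power-mean step gives the first inequality. The AM--GM reduction to $(ab)^{1-1/p}(s^2/4)^{1/p}<\lceil s/2\rceil\lfloor s/2\rfloor$ is valid, since dividing by $(ab)^{1/p}$ uses $n_k\ge 1$, which holds where the lemma is applied because the $n_i$ are part sizes. The parity split, with $a-b\ge 3$ forced in the odd case, is exactly what produces the threshold $p>9/8$.

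The paper gives no proof of this lemma; it is quoted from \cite{Kang-Nikiforov-Yuan2014}, so there is no internal argument to compare against. Your argument is a self-contained verification along the natural route. As a side remark, strict concavity of $t\mapsto t^{q}$ makes your Bernoulli step strict, so the bound also holds at the endpoint $p=9/8$.
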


The following lemma compares the $p$-spectral radii of specific joined hypergraphs.
\begin{lemma}\label{lem:balance-set}
	Let $p>\frac{9}{8}$ and $n_1, \cdots,n_k$ be $k$ positive  integers satisfying $\sum_{i=1}^k n_i=n-t+1$.
	Then $$\lambda^{(p)}(K_{t-1}^{r}\vee K_k^{r}(n_1,\cdots,n_k))\le \lambda^{(p)}(K_{t-1}^{r}\vee T_r(n-t+1,k)).$$
\end{lemma}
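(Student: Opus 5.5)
We prove Lemma \ref{lem:balance-set} by a balancing argument: if two parts differ in size by at least $2$, moving one vertex from the larger part to the smaller one does not decrease the $p$-spectral radius. After finitely many such moves we reach $K_{t-1}^{r}\vee T_r(n-t+1,k)$. Write $\mathcal{G}=K_{t-1}^{r}\vee K_k^{r}(n_1,\dots,n_k)$, with the clique $K_{t-1}^r$ on vertex set $W$ and parts $V_1,\dots,V_k$. Assume without loss of generality that $n_1\ge\cdots\ge n_k$ and $n_1\ge n_k+2$. Let $\mathbf{x}$ be a nonnegative eigenvector of $\lambda^{(p)}(\mathcal{G})$.

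First we symmetrize. The vertices inside one part $V_i$ are pairwise equivalent under automorphisms. Since $p>1$, the function $\mathbf{x}\mapsto\|\mathbf{x}\|_p^p$ is strictly convex. Averaging $\mathbf{x}^p$-mass over an orbit does not decrease $P_{\mathcal{G}}$, by the standard replacement: the polynomial is multilinear and each edge uses at most one vertex of $V_i$, so $P_{\mathcal{G}}$ is linear in the vector $(x_v)_{v\in V_i}$ through the sum $\sum_{v\in V_i}x_v$. For fixed $\sum_{v\in V_i}x_v^p$, this sum is maximized when all coordinates are equal (power-mean inequality). Hence we may assume $x_v=y_i$ for all $v\in V_i$, and similarly $x_w=z$ for $w\in W$, because $K_{t-1}^r$ together with its join is symmetric in $W$. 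Then
\[
P_{\mathcal{G}}(\mathbf{x})=r!\sum_{j}\binom{t-1}{j}z^j\,e_{r-j}(n_1y_1,\dots,n_ky_k),
\]
where $e_s$ is the $s$-th elementary symmetric polynomial in the ``part weights'' $n_iy_i$, counted with the multiplicities of choosing distinct vertices. More precisely, $e_{r-j}$ sums over choices of $r-j$ distinct parts and picks one vertex from each; for the join, edges may use several vertices of $W$ and vertices from distinct parts.

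Next we isolate parts $1$ and $k$. Write $a=n_1y_1$, $b=n_ky_k$. Every term of $P$ depends on $(a,b)$ only through $a+b$ and $ab$: it is of the form $A+B(a+b)+C\,ab$ with $A,B,C\ge0$ depending on $z$ and the other $y_i$. Now replace $(n_1,n_k,y_1,y_k)$ by $(n_1-1,n_k+1,y^{*},y^{*})$ if $n_1+n_k$ is even. In general, replace $(n_1,n_k)$ by $\bigl(\lceil\frac{n_1+n_k}{2}\rceil,\lfloor\frac{n_1+n_k}{2}\rfloor\bigr)$ with both weights equal to $y^*$ as in Lemma \ref{lem:twoinequalities}. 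This keeps the $p$-norm of $\mathbf{x}$ unchanged, since $n_1y_1^p+n_ky_k^p=(n_1+n_k)(y^*)^p$. By Lemma \ref{lem:twoinequalities}, $a+b\le(n_1+n_k)y^*$ and $ab<\lceil\frac{n_1+n_k}{2}\rceil\lfloor\frac{n_1+n_k}{2}\rfloor(y^*)^2$. Since $B,C\ge0$, the value of $P$ does not decrease on the new hypergraph $\mathcal{G}'$ with a feasible vector. Therefore $\lambda^{(p)}(\mathcal{G})\le\lambda^{(p)}(\mathcal{G}')$. The lemma requires $y_i>0$; if some $y_i=0$, a perturbation or continuity argument handles it, or we note that $\lambda^{(p)}(\mathcal{G})$ is attained anyway.

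Iterating strictly reduces $\sum_i n_i^2$, so the process terminates at the balanced partition, giving the claim. The main point needing care is the bookkeeping that $P$ is affine in $a+b$ and in $ab$ with nonnegative coefficients. This holds because each edge meets $V_1\cup V_k$ in at most two vertices, at most one from each part, and because the $W$-part of edges only contributes nonnegative coefficient factors. The normalization of Lemma \ref{lem:twoinequalities} applies after rescaling so that the whole vector has unit $p$-norm; only the sub-sum over $V_1\cup V_k$ matters, and the lemma is homogeneous.
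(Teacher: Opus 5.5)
There is a genuine gap in your argument.

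\textbf{The gap.} Your reduction rests on the claim that every edge of $\mathcal G$ meets each part in at most one vertex, so that $P_{\mathcal G}=A+B(a+b)+C\,ab$ with $a=n_1y_1$, $b=n_ky_k$. That is false for the join as defined in this paper. $\mathcal H_1\vee\mathcal H_2$ contains \emph{every} $r$-set meeting both vertex sets, so every $r$-subset of $V(\mathcal G)$ that meets $W$ is an edge. Compare Lemma~\ref{degreeW}, where vertices of $W$ end up with degree $\binom{n-1}{r-1}$.

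For $t\ge 2$, such edges can contain up to $r-1$ vertices of $V_1$, or several vertices from each of $V_1$ and $V_k$. For example, the edges with one vertex in $W$ and $r-1$ vertices in $V_1$ contribute $r!\,(t-1)\binom{n_1}{r-1}z\,y_1^{r-1}$. This is not a function of $a+b$ and $ab$. Consequently:
\begin{itemize}
\item your expression for $P_{\mathcal G}$ through $e_{r-j}$ over distinct parts is wrong for $j\ge1$;
\item Lemma~\ref{lem:twoinequalities} gives no control over these terms;
\item your symmetrization step, which relies on the same ``linear in $(x_v)_{v\in V_i}$'' claim, is also unjustified as written.
\end{itemize}

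\textbf{The missing idea.} You need a separate treatment of the edges meeting $W$. These are the same $r$-sets in $\mathcal G$ and in the rebalanced $\mathcal G'$. Group them by $s=|e\cap(V_1\cup V_k)|\in[1,r-1]$. Their $(y_1,y_k)$-dependent contribution is then
\[
\sum_{s=1}^{r-1}c_s\sum_{i+j=s}\binom{n_1}{i}\binom{n_k}{j}y_1^iy_k^j,
\]
where $c_s\ge 0$ depends only on coordinates outside $V_1\cup V_k$. The required inequality, for each $s$, is
\[
\sum_{i+j=s}\binom{n_1}{i}\binom{n_k}{j}y_1^iy_k^j\le\binom{n_1+n_k}{s}(y^*)^s .
\]
This follows from Maclaurin's inequality combined with the power-mean inequality for $p\ge1$. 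Equivalently, the $p$-spectral radius of $K^s_{n_1+n_k}$ is attained at a constant vector.

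This is exactly how the paper handles this edge class. The edges avoiding $W$ are then treated exactly as you do, via Lemma~\ref{lem:twoinequalities}.

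The same Maclaurin argument also repairs your symmetrization. Restricted to the coordinates of a single part, or of $W$, $P_{\mathcal G}$ is a nonnegative combination of elementary symmetric polynomials with coefficients that do not depend on those coordinates.

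With these two additions, your proof becomes the paper's proof.
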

\begin{proof}
	Let $\mathcal G=K_{t-1}^{r}\vee K_k^{r}(n_1,\cdots,n_k)$ have vertex partition  $V(\mathcal G)=U_0\cup U_1\cup \cdots \cup U_k$, where $|U_0|=t-1$ and $|U_i|=n_i$ for all $i\in [k]$. Without loss of generality,  we assume $n_1\ge \cdots\ge n_k$.
If $n_1\le n_k+1$,	the proof is complete. We thus assume
	 that $n_1\ge n_k+2$.	
Let  $m_1=\left\lceil\frac{n_1+n_k}{2}\right\rceil$, $m_k=\left\lfloor\frac{n_1+n_k}{2}\right\rfloor$,  $m_i=n_i$ for all $i\in[2,k-1]$, and let $\mathcal G' \cong K_{t-1}^{r}\vee K_k^{r}(m_1,\cdots,m_k)$ have vertex partition $V(\mathcal G')=V_0\cup V_1\cup \ldots \cup V_k$ such that $\mathcal G'[V_0]=K_{t-1}^{r}$, $V_1=U_1\setminus U'$, $V_k=U_k\cup U'$ and $V_i=U_i$ for any $i\in \{0,2,3,\ldots, k-1\}$, where $U'$ is a subset of $U_1$ with size $\left\lfloor\frac{n_1-n_k}{2}\right\rfloor$. It follows that $V_1\cup V_k=U_1\cup U_k$.

It  suffices to prove that $\lambda^{(p)}(\mathcal G)< \lambda^{(p)}(\mathcal G')$.
Let $\mathbf{y}$ be the nonnegative eigenvector satisfying $P_{\mathcal G}(\mathbf{y})=\lambda^{(p)}(\mathcal G)$.
	By symmetry, we may assume that $y_u=y_i$ for all $u\in U_i, i\in \{0,1,2,\cdots,k\}$, with the normalization condition $(t-1)y_0^p+n_1y_1^p+\cdots+n_ky_k^p=1$.
Let $y^* = \left(\frac{n_1y_1^p+n_ky_k^p}{n_1+n_k}\right)^{\frac{1}{p}}$, and define an $n$-dimensional vector $\mathbf{z}$ by setting  $z_u=y^*$ for all $u\in U_1\cup U_k$ and $z_u=y_i$ for all $u\in U_i, i\in \{0,2,3,\cdots,k-1\}$.  It then follows that $\lambda^{(p)}(\mathcal G')\geq P_{\mathcal G'}(\mathbf{z})$.

We proceed to show that $P_{\mathcal G'}(\mathbf{z})>P_{\mathcal G}(\mathbf{y})=\lambda^{(p)}(\mathcal G)$, which will complete the proof.
	Let $E_1$ denote set of edges in $\mathcal G$  intersecting $V_1\cup V_k$, and $E_2$  the set of edges in $\mathcal G'$  intersecting $U_1\cup U_k$. For $i=1,2$, define $E_{i1}:=\{e\in E_i\colon e\cap U_0\neq \emptyset\}$ and $E_{i2}:=E_i\setminus E_{i1}$. We then have
\[P_{\mathcal G'}(\mathbf{z})-P_{\mathcal G}(\mathbf{y})=r!\bigg(\sum_{e\in E_{21}}\mathbf z^{e}+\sum_{e\in E_{22}}\mathbf z^{e}-\sum_{e\in E_{11}}\mathbf y^{e}-\sum_{e\in E_{12}}\mathbf y^{e}\bigg).\]
By the constructions of $\mathcal G$ and $\mathcal G'$, $E_{11}=E_{21}$. For each $s\in[r-1]$, let $F(r-s)$ denote the family of $(r-s)$-sets of $U_0\cup U_2\cup \cdots \cup U_{k-1}$ that intersect $U_0$.  Every $s$-set of $U_1\cup U_k$ and every $(r-s)$-set $f\in F(r-s)$ together form an edge in $E_{11}$, which yields  the following identities:
	\begin{align*}
		\sum_{e\in E_{11}}\mathbf y^{e}=\sum_{s=1}^{r-1}\bigg(\sum_{i+j=s}\binom{n_1}{i}\binom{n_k}{j}y_1^{i}y_k^{j}\bigg)\bigg(\sum_{f\in F(r-s)}\mathbf y^{f}\bigg)
	\end{align*}
and
\begin{align*}
\sum_{e\in E_{21}}\mathbf z^{e}=\sum_{s=1}^{r-1}\binom{n_1+n_k}{s}(y^*)^{s}\bigg(\sum_{f\in F(r-s)}\mathbf y^{f}\bigg).
	\end{align*}

	Consider the $s$-uniform complete hypergraph $\mathcal K=K_{n_1+n_k}^{s}$, we have
	$$P_{\mathcal K}(\mathbf{y}|_{U_1\cup U_k})=r!\sum_{i+j=s}\binom{n_1}{i}\binom{n_k}{j}y_1^{i}y_k^{j}\ \mbox{and} \   P_{\mathcal K}(\mathbf{z}|_{U_1\cup U_k})=r!\binom{n_1+n_k}{s}(y^*)^{s}.$$
	Since $n_1y_1^{p}+n_ky_{k}^{p}=(n_1+n_k)(y^*)^{p}$,  the property of $p$-spectral radius of $s$-uniform complete hypergraphs implies that
 \[\sum_{e\in E_{11}}\mathbf y^{e}\le \sum_{e\in E_{21}}\mathbf z^{e}.\]

We now turn to the edges in $E_{12}$ and $E_{22}$.
For each $i\in[2,k-1]$, let $F(r-1)$ denote the family of $(r-1)$-set of $U_2\cup \cdots \cup U_{k-1}$ that intersect $U_i$ in at most one vertex, and $F(r-2)$ denote the  family of $(r-2)$-set of $U_2\cup \cdots \cup U_{k-1}$ that intersect $U_i$ in at most one vertex.
By the constructions of $\mathcal G$ and $\mathcal G'$,
 we have
	\begin{align*}
		\sum_{e\in E_{12}}\mathbf y^{e}=\big(n_1y_1+n_ky_k\big)\bigg(\sum_{f\in F(r-1)}\mathbf y^{f}\bigg)+n_1n_ky_1y_k\bigg(\sum_{f\in F(r-2)}\mathbf y^{f}\bigg)
	\end{align*}
	and
	\begin{align*}
		\sum_{e\in E_{22}}\mathbf z^{e}=(n_1+n_k)y^*\bigg(\sum_{f\in F(r-1)}\mathbf y^{f}\bigg)+\big\lceil\frac{n_1+n_k}{2}\big\rceil \big\lfloor\frac{n_1+n_k}{2}\big\rfloor (y^*)^2\bigg(\sum_{f\in F(r-2)}\mathbf y^{f}\bigg).
	\end{align*}
	By Lemma \ref{lem:twoinequalities}, it follows that $\sum_{e\in E_{12}}\mathbf y^{e}< \sum_{e\in E_{22}}\mathbf z^{e}$, which completes the proof.
	\end{proof}	

\section{Proof of Theorem \ref{F+spectral-stability}}
A fundamental stability theorem for expanded cliques was established by Mubayi \cite{Mubayi2006}, and a spectral generalization of this result was recently derived by Liu, Ni, Wang and Kang~\cite{LiuNiWangKang2024}.

\begin{lemma}[\cite{LiuNiWangKang2024}]\label{lem:spectral-stability-general-clique}
Let $p>1$ and $k\ge r\ge 3$.
For every $\varepsilon>0$ there exist constants $\delta=\delta(k,r,\varepsilon)>0$
and $n_0=n_0(k,r,\varepsilon)$ such that the following holds for all $n>n_0$:
If $\mathcal{H}$ is an $n$-vertex $\mathcal{H}_{k+1}^{(r)}$-free $r$-graph and
\[
\lambda^{(p)}(\mathcal{H})
  > \lambda^{(p)}(T_r(n,k)) - \delta n^{r(1-1/p)},
\]
then $\mathcal{H}$ is $\varepsilon n^{r}$-close to $T_r(n,k)$.
\end{lemma}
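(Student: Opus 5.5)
The plan is to reduce spectral stability to Mubayi's edge-stability theorem for $\mathcal{H}_{k+1}^{(r)}$-free $r$-graphs \cite{Mubayi2006}. To do this I would use the eigenvector $\mathbf{x}$ to locate a large vertex set $W$ on which $\mathbf{x}$ is nearly constant and which carries almost $t_r(n,k)$ edges.

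\textbf{Step 1 (Lagrangian bound).} I would invoke the known Lagrangian bound for this family (Sidorenko; Frankl--F\"uredi; essentially the content of Mubayi's argument): since $\mathcal{H}$ is $\mathcal{H}_{k+1}^{(r)}$-free, we have $\lambda^{(1)}(\mathcal{H})\le \lambda^{(1)}(K_k^r)=(k)_r/k^r$. This is an outside result, not stated in the paper. With $\mathbf{x}\ge 0$ a nonnegative eigenvector and $\|\mathbf{x}\|_p=1$, homogeneity gives
\[
\lambda^{(p)}(\mathcal{H})=P_{\mathcal{H}}(\mathbf{x})\le \frac{(k)_r}{k^r}\|\mathbf{x}\|_1^{\,r}\le \frac{(k)_r}{k^r}\,n^{r(1-1/p)}.
\]
The uniform vector shows that $\lambda^{(p)}(T_r(n,k))\ge r!\,t_r(n,k)\,n^{-r/p}=(1-O(1/n))\frac{(k)_r}{k^r}n^{r(1-1/p)}$ by (\ref{eq:size-T(n,k)}). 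The hypothesis therefore forces two things:
\begin{itemize}
\item[(a)] $\|\mathbf{x}\|_1\ge (1-\eta)\,n^{1-1/p}$, with $\eta\to 0$ as $\delta\to0$ and $n\to\infty$;
\item[(b)] $P_{\mathcal{H}}(\mathbf{x})\ge (1-\eta)\frac{(k)_r}{k^r}\|\mathbf{x}\|_1^r$.
\end{itemize}

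\textbf{Step 2 (near-uniformity of $\mathbf{x}$).} Since $p>1$, the function $t\mapsto t^p$ is strictly convex. Given $\|\mathbf{x}\|_p=1$, near-equality in H\"older's inequality $\|\mathbf{x}\|_1\le n^{1-1/p}\|\mathbf{x}\|_p$ then gives a quantitative conclusion. For any $\gamma>0$, the set $B$ of vertices with $x_v\notin[(1-\gamma)n^{-1/p},(1+\gamma)n^{-1/p}]$ has $|B|\le \eta' n$, where $\eta'=\eta'(\eta,\gamma,p)\to0$ as $\eta\to0$. I would prove this with a second-order Taylor bound on $t^p$ around $n^{-1/p}$ for moderately deviating coordinates. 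Coordinates that are very large are few, because $\sum x_v^p=1$.

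\textbf{Step 3 (transfer to edge counts).} Set $W=V\setminus B$. I need to show that edges meeting $B$ contribute little to $P_{\mathcal{H}}(\mathbf{x})$. Their total contribution is at most $r!\sum_{v\in B}x_v\cdot\bigl(\sum_{u}x_u\bigr)^{r-1}/(r-1)!$, which is at most a constant times $\|\mathbf{x}|_B\|_1\, n^{(r-1)(1-1/p)}$. H\"older gives $\|\mathbf{x}|_B\|_1\le |B|^{1-1/p}\le (\eta'n)^{1-1/p}$, so this contribution is $O(\eta'^{\,1-1/p})\,n^{r(1-1/p)}$. Hence
\[
r!\,e(\mathcal{H}[W])\,(1+\gamma)^r n^{-r/p}\ \ge\ P_{\mathcal{H}[W]}(\mathbf{x})\ \ge\ \Bigl(\tfrac{(k)_r}{k^r}-\eta''\Bigr)n^{r(1-1/p)},
\]
which gives $e(\mathcal{H}[W])\ge t_r(|W|,k)-\varepsilon' n^r$.

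\textbf{Step 4 (edge stability).} The graph $\mathcal{H}[W]$ is $\mathcal{H}_{k+1}^{(r)}$-free. By Mubayi's stability theorem, for $\varepsilon'$ small enough it is $(\varepsilon/2)n^r$-close to $T_r(|W|,k)$. Re-inserting the at most $\eta' n$ vertices of $B$ changes at most $\eta' n^r$ edges. Rebalancing the part sizes to match $T_r(n,k)$ costs a negligible amount, because a near-Tur\'an edge count forces near-equal parts. Altogether $\mathcal{H}$ is $\varepsilon n^r$-close to $T_r(n,k)$, and $\delta$ and $n_0$ are chosen backwards from $\varepsilon$.

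I expect the main obstacle to be Step 2 together with Step 3. There I must turn the single near-equality $\|\mathbf{x}\|_1\approx n^{1-1/p}$ into pointwise control of all but $o(n)$ coordinates, with constants depending only on $p$. I must also make sure that a few very large coordinates cannot carry a constant fraction of $P_{\mathcal{H}}(\mathbf{x})$. The H\"older bound in Step 3 is how I intend to handle this second point.
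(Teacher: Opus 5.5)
Your proof is correct. The paper itself gives no proof of this lemma: it is imported from \cite{LiuNiWangKang2024}. The paper's own reductions (Lemma~\ref{lem:spectral-stability} and Theorem~\ref{F+spectral-stability}) only transfer spectral closeness between hypergraphs that differ in few edges, using Proposition~\ref{prop:weyl-inequality} and Lemma~\ref{lem:size-upper-bound}, so all of them rest on this statement.

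Your argument supplies it from two classical inputs. The first is the Frankl--R\"odl/Sidorenko bound $\lambda^{(1)}(\mathcal H)\le (k)_r/k^r$: an optimal weighting of minimal support covers every pair of its support, so that support has at most $k$ vertices. The second is Mubayi's edge-stability theorem. Each of your steps holds up.

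Two small simplifications are available:
\begin{itemize}
\item In Step~2, put $y_v=x_vn^{1/p}$ and $g(y)=y^p-1-p(y-1)\ge 0$. Then $\sum_v g(y_v)=p\bigl(n-\sum_v y_v\bigr)\le p\eta n$, while $g\ge\min\{g(1-\gamma),g(1+\gamma)\}>0$ outside $[1-\gamma,1+\gamma]$. This bounds $|B|$ at once, with no separate treatment of large coordinates.
\item In Step~4 no rebalancing is needed. The balanced partition of $W$ extends to a balanced partition of $V$ by adding the vertices of $B$ one at a time to a smallest part.
\end{itemize}

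Your Steps 2--3 can in fact be bypassed. Apply H\"older over the edge set with $\|\mathbf{x}\|_p=1$:
\[
\sum_{e\in E(\mathcal H)}\mathbf{x}^{e}\le\Bigl(\sum_{e\in E(\mathcal H)}\prod_{v\in e}x_v^{p}\Bigr)^{1/p}e(\mathcal H)^{1-1/p}\le\Bigl(\frac{\lambda^{(1)}(\mathcal H)}{r!}\Bigr)^{1/p}e(\mathcal H)^{1-1/p},
\]
where the second inequality holds because $(x_v^p)_v$ is a probability vector. Hence
\[
\lambda^{(p)}(\mathcal H)\le\lambda^{(1)}(\mathcal H)^{1/p}\,(r!\,e(\mathcal H))^{1-1/p},
\]
which sharpens Lemma~\ref{lem:size-upper-bound}. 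Combined with $\lambda^{(1)}(\mathcal H)\le(k)_r/k^r$ and the hypothesis, this gives $e(\mathcal H)\ge t_r(n,k)-\varepsilon'n^r$ for $\mathcal H$ itself. Mubayi's theorem then finishes, with no exceptional set and no re-insertion. What your longer route buys is information about the eigenvector: all but $o(n)$ entries equal $(1\pm\gamma)n^{-1/p}$. The H\"older route does not provide this.

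Finally, both routes give a $\delta$ depending on $p$. In yours, $g$ depends on $p$; in the H\"older route, $\varepsilon'$ involves the exponent $p/(p-1)$. This is unavoidable despite the notation $\delta(k,r,\varepsilon)$ in the statement. Consider $T_r(\lceil n/2\rceil,k)$ plus isolated vertices: it is far from $T_r(n,k)$, yet its $p$-spectral radius falls short of $\lambda^{(p)}(T_r(n,k))$ by at most about $(1-2^{-r(1-1/p)})\frac{(k)_r}{k^r}n^{r(1-1/p)}$. That coefficient tends to $0$ as $p\to1^{+}$.
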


We extend the above result to the  expansion of  $(k+1)$\nobreakdash-chromatic graph $F$.
\begin{lemma}\label{lem:spectral-stability} 
Let $F$ be a graph with $\chi(F)=k+1$, $p>1$ and $k\geq r\geq 3$. For any $\varepsilon>0$, there are $\delta>0$ and $n_0$ such that the following holds for all $n>n_0$: If $\mathcal{H}$ is an $n$-vertex
$\mathcal{H}_{F}^{(r)}$-free $r$-graph with
\[\lambda^{(p)}(\mathcal{H})>\lambda^{(p)}(T_r(n,k))-\delta n^{r(1-1/p)},\]
then $\mathcal{H}$ is $\varepsilon n^{r}$-close to $T_r(n,k)$.
\end{lemma}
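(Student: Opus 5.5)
We reduce Lemma \ref{lem:spectral-stability} to Lemma \ref{lem:spectral-stability-general-clique}. The idea is to delete a small number of edges of $\mathcal H$ so that the remainder becomes $\mathcal{H}_{k+1}^{(r)}$-free. By Proposition \ref{prop:weyl-inequality} and Lemma \ref{lem:size-upper-bound}, this deletion lowers the $p$-spectral radius by only a small amount.

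\textbf{Step 1 (cleaning pairs).} Fix $\varepsilon>0$ and let $\delta_0=\delta(k,r,\varepsilon)$ and $n_0$ be the constants from Lemma \ref{lem:spectral-stability-general-clique}. Let $\mathcal H$ be $\mathcal H_F^{(r)}$-free with $\lambda^{(p)}(\mathcal H)>\lambda^{(p)}(T_r(n,k))-\delta n^{r(1-1/p)}$, where $\delta\le\delta_0/2$ will be chosen later. Call a pair $\{u,v\}$ of the shadow $\partial(\mathcal H)$ \emph{light} if its codegree in $\mathcal H$ is less than $M:=e(F)\cdot r\cdot |V(F)|$ (roughly, a constant depending only on $F,r$), and \emph{heavy} otherwise. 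Delete every edge of $\mathcal H$ that contains a light pair, and call the result $\mathcal H_1$. The number of deleted edges is at most $M\binom n2=O(n^2)$.

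\textbf{Step 2 (shadow is $F$-free).} Let $G$ be the graph of heavy pairs. We claim that $G$ is $F$-free. Given a copy of $F$ in $G$, we can greedily choose, for its edges one at a time, distinct hyperedges of $\mathcal H$ containing them. Distinctness is possible because each heavy pair lies in at least $M>e(F)$ hyperedges. The chosen family has at most $e(F)$ edges and covers every edge of $F$, so it lies in $\mathcal H_F^{(r)}$, which is a contradiction.

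Lemma \ref{removal-lemma} then allows us to delete a set $D$ of at most $\eta n^2$ pairs from $G$ so that the result is $K_{k+1}$-free. Delete from $\mathcal H_1$ every edge containing a pair in $D$; this removes at most $\eta n^2\binom{n-2}{r-2}\le \eta n^r$ edges. Call the result $\mathcal H_2$. Every pair in the shadow of $\mathcal H_2$ is heavy and not in $D$, so $\partial(\mathcal H_2)\subseteq G-D$ is $K_{k+1}$-free.

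$\mathcal H_2$ is $\mathcal H_{k+1}^{(r)}$-free. Indeed, a core $C$ of a member of $\mathcal H_{k+1}^{(r)}$ inside $\mathcal H_2$ would have all its pairs in the shadow, giving a $K_{k+1}$ in $\partial(\mathcal H_2)$.

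\textbf{Step 3 (spectral loss).} The deleted edges form an $r$-graph $\mathcal R$ with $|\mathcal R|\le 2\eta n^r$ for large $n$. Proposition \ref{prop:weyl-inequality} and Lemma \ref{lem:size-upper-bound} give
\[
\lambda^{(p)}(\mathcal H_2)\ge \lambda^{(p)}(\mathcal H)-(r!\,2\eta n^r)^{1-1/p}
> \lambda^{(p)}(T_r(n,k))-\bigl(\delta+(2r!\eta)^{1-1/p}\bigr)n^{r(1-1/p)}.
\]
Choose $\eta$ with $(2r!\eta)^{1-1/p}\le\delta_0/2$ and $\eta\le\varepsilon/4$, and set $\delta=\delta_0/2$. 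Since $p>1$, the exponent $1-1/p$ is positive, so such an $\eta$ exists. Lemma \ref{lem:spectral-stability-general-clique}, applied with $\varepsilon/2$, shows that $\mathcal H_2$ is $(\varepsilon/2)n^r$-close to $T_r(n,k)$. Since $\mathcal H$ and $\mathcal H_2$ differ in at most $2\eta n^r\le(\varepsilon/2) n^r$ edges, the triangle inequality gives that $\mathcal H$ is $\varepsilon n^r$-close to $T_r(n,k)$.

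\textbf{Main obstacle.} The delicate point is Step 2: certifying that the heavy-pair graph is $F$-free. This requires that distinct hyperedges can be picked for the distinct edges of $F$, which is exactly what the codegree threshold $M$ guarantees; note that $\mathcal H_F^{(r)}$ does not require the hyperedges to be otherwise disjoint. The remaining steps are bookkeeping of constants, which works because all losses are $O(\eta n^r)$ edges, and these translate into $o$-scale spectral losses of order $\eta^{1-1/p}n^{r(1-1/p)}$.
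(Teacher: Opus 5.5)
Your proof is correct and follows essentially the same route as the paper: show the relevant shadow graph is $F$-free, apply the removal lemma to make it $K_{k+1}$-free, delete the $O(\eta n^r)$ hyperedges containing a removed pair to get an $\mathcal H_{k+1}^{(r)}$-free $r$-graph, bound the spectral loss via Proposition \ref{prop:weyl-inequality} and Lemma \ref{lem:size-upper-bound}, and then invoke Lemma \ref{lem:spectral-stability-general-clique}. Your Step 1 codegree cleaning is harmless but unnecessary: $\mathcal H_F^{(r)}$ only asks for at most $e(F)$ covering edges, and these need not be distinct, so the paper simply notes that the full shadow $\partial(\mathcal H)$ is already $F$-free. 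One small slip: $\delta_0$ should come from Lemma \ref{lem:spectral-stability-general-clique} applied with $\varepsilon/2$, not with $\varepsilon$.
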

\begin{proof}
Given $\varepsilon>0$, by Lemma \ref{lem:spectral-stability-general-clique}, there exist $\delta_1>0$ and $n_0$ such that any $\mathcal{H}_{k+1}^{(r)}$-free $r$-graph $\mathcal{G}$ having $n>n_0$ vertices and $\lambda^{(p)} (\mathcal{G}) \geq  \lambda^{(p)}(T_r(n,k))-\delta_1 n^{r(1-1/p)}$ is $\frac{\varepsilon}{2}n^{r}$-close to $T_r(n, k)$.
Choose \[\varepsilon_1 =\min\left\{\frac{\varepsilon}{2},\frac{1}{r(r-1)}\cdot\big(\frac{\delta_1}{2}\big)^{p/(p-1)}\right\}, \ \mbox{and} \ \delta=\frac{\delta_1}{2}.\]

Let $G$ be the shadow graph  of $\mathcal{H}$. Since $\mathcal{H}$ is an $n$-vertex $\mathcal{H}_{F}^{(r)}$-free $r$-graph, $G$ is $F$-free on $n$ vertices.
Recall that $\chi(F)=k+1$, by Lemma \ref{removal-lemma}, there exists a set $E_0\subseteq E(G)$ with $|E_0|=\varepsilon_1 n^2$ such that the graph $G'$ obtained from $G$ by deleting edges of $E_0$ is $K_{k+1}$-free.
Let $\mathcal{H}'$ be the $r$-graph obtained from $\mathcal{H}$ by deleting all $e\in E(\mathcal{H})$  containing an edge in $f\in E_0$.
Then $e(\mathcal{H}\setminus \mathcal{H}')\leq \varepsilon_1 n^2 \binom{n}{r-2}\leq \frac{\varepsilon}{2}n^r$ and $\mathcal{H}'$ is $\mathcal{H}_{k+1}^{(r)}$-free.

Next, we give an estimation of $\lambda^{(p)} (\mathcal{H}')$.
Lemma \ref{lem:size-upper-bound} implies that
 \[
\lambda^{(p)} (\mathcal{H}\setminus \mathcal{H}') < \big(r! \cdot e(\mathcal{H}\setminus \mathcal{H}')\big)^{1-1/p} <
\frac{\delta_1}{2}  n^{r(1-1/p)}.
\]
Hence, by Proposition \ref{prop:weyl-inequality},
\begin{align*}
\lambda^{(p)} (\mathcal{H}')
& \geq \lambda^{(p)}(\mathcal{H}) - \lambda^{(p)} (H\setminus \mathcal{H}') \\
& > \lambda^{(p)}(T_r(n,k))-\frac{\delta_1}{2} n^{r(1-1/p)}-\frac{\delta_1}{2}  n^{r(1-1/p)} \\
& \geq  \lambda^{(p)}(T_r(n,k))-\delta_1 n^{r(1-1/p)}.
\end{align*}
It follows from Lemma \ref{lem:spectral-stability-general-clique} that
$\mathcal{H}'$ is $\frac{\varepsilon}{2} n^{r}$-close to $T_r(n,k)$.
Therefore, by the construction of $\mathcal{H}'$, $\mathcal{H}$ is $\varepsilon n^{r}$-close to $T_r(n,k)$.
\end{proof}

We are now ready to prove Theorem~\ref{F+spectral-stability}.

\noindent \textbf{Proof of Theorem \ref{F+spectral-stability}}.
Given $\varepsilon > 0$, by Lemma \ref{lem:spectral-stability}, there exist
$\delta_1>0$ and $n_0$ such that any $\mathcal{H}_{F}^{(r)}$-free $r$-graph $\mathcal{G}$ having $n\geq n_0$ vertices and
$\lambda^{(p)} (\mathcal{G}) > \lambda^{(p)}(T_r(n,k))-\delta_1 n^{r(1-1/p)}$ is $\frac{\varepsilon}{2} n^{r}$-close to $T_r(n,k)$.
Choose
\[
\delta = \min \bigg\{ \frac{\delta_1}{2}, \big(r!\cdot \frac{\varepsilon}{2}\big)^{1-1/p} \bigg\}.
\]
Let $|V(F)|=f$ and $d= \bigg(f + (r-2)\binom{f}{2}\bigg) \binom{n}{r-3}$.
Let $\mathcal{H}_1$ be the $r$-graph obtained from $\mathcal{H}$ by removing all edges that contain pairs with codegree at most $d$.
Since the number of pairs of vertices is $\binom{n}{2}$, we obtain
\begin{equation}\label{eq:upper-bound-H-H1}
\begin{split}
e(\mathcal{H}\setminus \mathcal{H}_1)
 \leq \bigg(f + (r-2)\binom{f}{2}\bigg) \binom{n}{r-3} \times \binom{n}{2}  < \frac{\delta^{p/(p-1)}}{r!} n^{r}.
\end{split}
\end{equation}

Now, we show that $\mathcal{H}_1$ is $\mathcal{H}_{F}^{(r)}$-free.
Suppose to the contrary that $\mathcal{H}_1$ contains a copy of some $\mathcal{G}\in \mathcal{H}_{F}^{(r)}$ with core $C$. Then every pair from $C$ is covered by an edge of $\mathcal{H}_1$, implying that
every pair in $C$ has codegree at least $d$ in $\mathcal{H}$.
Consequently,
we can greedily choose edges of $\mathcal{H}$ that contain all pairs in $\binom{C}{2}$,
such that these edges intersect $C$ in exactly two vertices and are pairwise disjoint
outside $C$.
Then $\mathcal{H}$ contains $F^{(r)}$ as a subgraph, a contradiction.

Next, we give an estimation of $\lambda^{(p)} (\mathcal{H}_1)$. By Proposition \ref{prop:weyl-inequality},
\[
\lambda^{(p)} (\mathcal{H}) \leq \lambda^{(p)} (\mathcal{H}_1) + \lambda^{(p)} (\mathcal{H}\setminus \mathcal{H}_1).
\]
On the other hand, Lemma \ref{lem:size-upper-bound} and \eqref{eq:upper-bound-H-H1} imply that

\[
\lambda^{(p)} (\mathcal{H}\setminus \mathcal{H}_1) < \big(r! \cdot e(\mathcal{H}\setminus \mathcal{H}_1)\big)^{1-1/p} <
\delta  n^{r(1-1/p)},
\]
where the last inequality holds due to \eqref{eq:upper-bound-H-H1}. Hence,
\begin{align*}
\lambda^{(p)} (\mathcal{H}_1)
& \geq \lambda^{(p)}(\mathcal{H}) - \lambda^{(p)} (\mathcal{H}\setminus \mathcal{H}_1) \\
& > \lambda^{(p)}(T_r(n,k))-\delta n^{r(1-1/p)}-\delta  n^{r(1-1/p)}  \\
& \geq \lambda^{(p)}(T_r(n,k))-\delta_1 n^{r(1-1/p)}.
\end{align*}
It follows from Lemma \ref{lem:spectral-stability} that
$\mathcal{H}_1$ is $\frac{\varepsilon}{2} n^{r}$-close to $T_r(n,k)$.
Recall that $\mathcal{H}_1$ is obtained from $\mathcal{H}$ by removing at most
\[
\frac{\delta^{p/(p-1)}}{r!} n^{r} \leq \frac{\varepsilon}{2} n^{r}
\]
edges by \eqref{eq:upper-bound-H-H1}. Hence, $\mathcal{H}$ is
$\varepsilon n^{r}$-close to $T_r(n,k)$. This completes the proof.
\qed

\section{Proof of Theorem \ref{tH}}

For the remainder of this section, we  assume that $k \ge r \ge 3$, $p \ge r$,  and that $\mathcal{H}$ is an $n$-vertex $tK_{k+1}^{(r)}$-free $r$-graph with the maximum $p$-spectral radius. We further let $\varepsilon>0$ be sufficiently small and $n$ sufficiently large to ensure all subsequent inequalities hold.

The sketch of our proof is as follows:
		Firstly, we start by showing that $\mathcal{H}$ is $\varepsilon n^{r}$-close to $T_r(n,k)$ via the
		spectral stability theorem for $F^{(r)}$ (Theorem \ref{F+spectral-stability}).
		Furthermore, we obtain an optimal partition $V_1,\ldots,V_k$ of $V(\mathcal{H})$ such that
		$|V_i| = (1 + o(1)) n/k$ for each $i\in [k]$, and define sparse pair, dense pair and dominant pair
		based on codegree conditions.
		Then we analyze the sets $L$ (vertices incident to many sparse pairs) and $W$ (vertices incident to many dominant pairs), and show that $|L| = o(n)$ and $|W \setminus L| \le t-1$ (Lemma~\ref{WL}).
Moreover, for each $i\in[k]$, we guarantee the existence of a large subset
$T_i \subseteq V_i \setminus (W\cup L)$ containing no dominant pairs (Lemma~\ref{independentset} ).
Next, we establish crucial degree estimates: for any vertex $u\in L$, $d_{\mathcal{H}}(u) < \binom{k-1}{r-1}(n/k)^{r-1} - 2\ell$
		(Lemma \ref{lem:degree-vertex-in-L}), while for the vertex $z$ with maximum eigenvector entry,
		$d_{\mathcal{H}}(z) > \binom{k-1}{r-1}(n/k)^{r-1} - 2\ell$ (Lemma \ref{lem:degree-of-u0}).
		We also bound the number of edges containing sparse pairs or dominant pairs.
		Finally, we prove $L = \emptyset$ (Lemma \ref{lem:L-emptyset}) and $|W| = t-1$ (Lemma \ref{lem:W=t-1}).
		This implies $\mathcal{H} \subseteq K_{t-1}^{r}\vee K_k^{r}(n_1,\cdots,n_k)$, where $|U_i|=n_i$  and $U_i=V_i\setminus W$ for $i\in [k]$.
		By the maximality of $\mathcal{H}$ and Theorem \ref{lem:balance-set}, we have $\mathcal{H} \cong  K_{t-1}^{r} \vee T_r(n-t+1,k)$

To prepare for our structural analysis, we first  introduce the canonical
$k$-partition associated with an $r$-graph.   Given an $r$-graph $\mathcal{G}$ of order $n$, let $\bm{\sigma} = (V_1,V_2,\ldots,V_k)$ denote
a partition of $V(\mathcal{G})$. We define

\[
f_{\mathcal{G}}(\bm{\sigma}) := \sum_{e\in E(\mathcal{G})} |\{i\in [k]: e\cap V_i \neq \emptyset\}|.
\]
The following lemma characterizes the approximate balance property of an optimal partition.
\begin{lemma}[\cite{LiuNiWangKang2024}]\label{lem:size-Vi}
	Let $k\geq r\geq 3$, and $\varepsilon > 0$. Suppose that $\mathcal{G}$ is an $r$-graph on $n$ vertices,
	and $\mathcal{G}$ is $\varepsilon n^{r}$-close to $T_r(n,k)$. Let $\bm{\sigma} = (V_1,V_2,\ldots,V_k)$
	be a partition of $V(\mathcal{G})$ such that $f_{\mathcal{G}}(\bm{\sigma})$ attains the maximum. Then for each $i\in [k]$,
	\[
	\Big( \frac{1}{k} - \varepsilon^{1/r} \Big) n \leq |V_i| \leq \Big(\frac{1}{k} + \varepsilon^{1/r} \Big) n.
	\]
\end{lemma}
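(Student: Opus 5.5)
The plan is to compare $f_{\mathcal{G}}(\bm{\sigma})$ for the optimal partition with its value on a partition inherited from $T_r(n,k)$, and then turn any imbalance in the part sizes into a deficit in $f$.

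\textbf{Lower bound on $f_{\mathcal{G}}(\bm{\sigma})$.} Let $\bm{\tau}=(W_1,\dots,W_k)$ be the partition of $V(\mathcal{G})$ given by the parts of a copy of $T_r(n,k)$ that differs from $\mathcal{G}$ in at most $\varepsilon n^r$ edges. Every edge of $T_r(n,k)$ meets $r$ parts of $\bm{\tau}$. Hence
\[
f_{\mathcal{G}}(\bm{\tau})\ge r\,e(\mathcal{G})-r\varepsilon n^r\ge r\,t_r(n,k)-2r\varepsilon n^r .
\]
By maximality, $f_{\mathcal{G}}(\bm{\sigma})\ge r\,t_r(n,k)-2r\varepsilon n^r$.

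\textbf{Upper bound on $f_{\mathcal{G}}(\bm{\sigma})$.} Call an edge \emph{crossing} if it meets $r$ distinct parts of $\bm{\sigma}$. Each non-crossing edge contributes at most $r-1$ to $f$. So
\[
f_{\mathcal{G}}(\bm{\sigma})\le r\,e_{\mathrm{cr}}+(r-1)\bigl(e(\mathcal{G})-e_{\mathrm{cr}}\bigr)=(r-1)e(\mathcal{G})+e_{\mathrm{cr}} .
\]
The crossing edges form a subgraph of $K_k^{r}(|V_1|,\dots,|V_k|)$. Therefore $e_{\mathrm{cr}}\le e_r(\bm{\sigma}):=\sum_{S\in\binom{[k]}{r}}\prod_{i\in S}|V_i|$. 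Also $e(\mathcal{G})\le t_r(n,k)+\varepsilon n^r$. Combining the two bounds gives
\[
e_r(\bm{\sigma})\ge t_r(n,k)-O_r(\varepsilon) n^r .
\]

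\textbf{From the deficit to balance.} Write $|V_i|=n/k+a_i$ with $\sum a_i=0$. The elementary symmetric polynomial $e_r$ is Schur-concave on the nonnegative orthant. Its second-order expansion around the balanced point gives
\[
e_r(\bm{\sigma})\le \binom{k}{r}\Bigl(\frac nk\Bigr)^r-c_{k,r}\,n^{r-2}\sum_i a_i^2
\]
for some $c_{k,r}>0$. Here the first-order term vanishes because $\sum a_i=0$, and the quadratic form is negative definite on that hyperplane since $r\ge2$. The expansion is valid uniformly because the region is compact after scaling by $n$.

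Combining this with $t_r(n,k)=\binom kr(n/k)^r-O(n^{r-1})$ yields $\sum a_i^2\le O_{k,r}(\varepsilon)n^2$. Hence $|a_i|\le C\sqrt{\varepsilon}\,n$. This is at most $\varepsilon^{1/r}n$ when $r\ge3$ and $\varepsilon$ is small, since $\sqrt{\varepsilon}\ll\varepsilon^{1/r}$. If needed, we first shrink $\varepsilon$ by a constant to absorb $C$.

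\textbf{Main obstacle.} The delicate step is making the quadratic deficit bound global rather than local. To handle this, I would argue by contradiction with compactness on the simplex $\{x\ge0,\ \sum x_i=1\}$: $e_r(x)$ has the balanced point as its unique maximizer. So if some $|x_i-1/k|\ge\eta$, then $e_r(x)\le e_r(\mathbf{1}/k)-\gamma(\eta)$ for some $\gamma(\eta)>0$. This already gives $o(1)$-balance. The quadratic local estimate then supplies the explicit rate $\varepsilon^{1/r}$.
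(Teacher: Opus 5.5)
The paper gives no proof of this lemma; it is quoted from \cite{LiuNiWangKang2024}, so there is nothing in the paper to compare your argument against. Judged on its own, your proof is correct once two hypotheses are made explicit (second paragraph).

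The chain $f_{\mathcal G}(\bm\sigma)\ge f_{\mathcal G}(\bm\tau)\ge r\,t_r(n,k)-2r\varepsilon n^r$ and $f_{\mathcal G}(\bm\sigma)\le (r-1)e(\mathcal G)+e_{\mathrm{cr}}$ correctly gives $e_r(|V_1|,\dots,|V_k|)\ge t_r(n,k)-(3r-1)\varepsilon n^r$. The reduction to stability of $e_r$ at the barycenter is also right. For $\sum_i y_i=0$ there is the exact identity $e_r(\tfrac1k\mathbf 1+y)=\binom kr k^{-r}-\tfrac12\binom{k-2}{r-2}k^{2-r}\sum_i y_i^2+\sum_{j\ge 3}\binom{k-j}{r-j}k^{j-r}e_j(y)$. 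Since $k\ge r$, the quadratic term is negative definite. The global bound comes from uniqueness of the maximizer on the simplex plus compactness, as in your last paragraph; compactness of the region alone does not make a Taylor expansion global. Your argument actually gives deviation $O(\sqrt{\varepsilon})\,n$, which is stronger than the stated $\varepsilon^{1/r}n$.

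Two points need tightening.

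\textbf{The dropped lower-order term.} In the last step you silently dropped the $O(n^{r-1})$ gap between $t_r(n,k)$ and $\binom kr (n/k)^r$. What your inequalities actually give is $\sum_i a_i^2\le C\varepsilon n^2+C'n$, hence $|a_i|\le\sqrt{C\varepsilon}\,n+\sqrt{C'n}$. The second term is below $\tfrac12\varepsilon^{1/r}n$ only for $n\ge n_0(\varepsilon)$.

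\textbf{Smallness of $\varepsilon$.} You cannot ``shrink'' a given $\varepsilon$. You need $\varepsilon\le\varepsilon_0(k,r)$ as a hypothesis, both for $\sqrt{C\varepsilon}\le\tfrac12\varepsilon^{1/r}$ and for the compactness step, where you need $(3r-1)\varepsilon+C'/n<\gamma(\eta)$.

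Neither point is a defect of your method: the statement as literally written is false without these hypotheses.
\begin{itemize}
\item Take $\mathcal G=T_r(k+1,k)$, which is $0$-close to itself. Every edge must be crossing in an $f$-maximizing partition, and since $k\ge r$ this forces the Tur\'an partition. That partition has a part of size $2>(\tfrac1k+\varepsilon^{1/r})(k+1)$ as soon as $\varepsilon^{1/r}<\tfrac{k-1}{k(k+1)}$.
\item For $k=r=3$ and $\varepsilon=1/4$, the empty graph is close to $T_3(n,3)$ for large $n$, so every partition maximizes $f$. The partition with $|V_1|=n-2$ violates the bound.
\end{itemize}
The paper only applies the lemma with $\varepsilon$ sufficiently small and $n$ sufficiently large. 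With those two hypotheses stated, your proof covers exactly what is needed.
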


Since $T_r(n,k)$ is $tK_{k+1}^{(r)}$-free, we have
\begin{eqnarray}\label{eq4.1}
\lambda ^{(p)}(\mathcal{H})\geq \lambda^{(p)}(T_r(n,k))\geq\frac{(r-1)!}{k^{r-1}} \bigg(1 - O\Big(\frac{1}{n}\Big)\bigg)
  \binom{k-1}{r-1} n^{r(1-1/p)}.
\end{eqnarray}
By Theorem \ref{F+spectral-stability}, $\mathcal{H}$ is $\varepsilon n^r$-close to $T_r(n,k)$.  Lemma \ref{lem:size-Vi} then implies that  $\mathcal{H}$ admits a partition
$\bm{\sigma}=(V_1,\dots,V_k)$ which maximizes $f_{\mathcal{H}}(\bm{\sigma})$ and satisfies
\begin{equation}\label{lem4.1}
    \Bigl(\frac{1}{k}-\varepsilon^{1/r}\Bigr)n
    \le |V_i| \le
    \Bigl(\frac{1}{k}+\varepsilon^{1/r}\Bigr)n
    \qquad \text{for all } i\in[k].
\end{equation}

For this   partition $\bm{\sigma}$, let
$\mathcal{T}$ denote the complete $k$-partite $r$-graph on $V(\mathcal{H})$
with vertex classes $V_1,\dots,V_k$.
We call edges in $\mathcal{T}\setminus\mathcal{H}$
\emph{missing edges}, and edges in $\mathcal{H}\setminus\mathcal{T}$ \emph{bad edges}.
Define
\[
h \;:=\; \bigl|V\bigl(tK_{k+1}^{(r)}\bigr)\bigr|
      \  \mbox{and}\ d \;:=\; h\,\binom{n}{\,r-3\,}.
\]
For two   vertices $u, v$  from two different vertex classes of
 $\bm{\sigma}$, the pair $\{u,v\}$ is said to be  \emph{sparse} if the codegree of $u$ and $v$ is at most $d$, and \emph{dense} otherwise.
Let $L$ be the set of vertices contained in at least $\varepsilon^{1/r^2} n$ sparse pairs of $\mathcal{H}$, and
let  $\mathscr{L}$  denote the set of all sparse pairs in $\mathcal{H}$.

By the definition of $f_{\mathcal{H}}(\bm{\sigma})$, we have
\[
r\left( e(\mathcal{H}) - \varepsilon\binom{n}{r} \right) \leq f_{\mathcal{H}}(\bm{\sigma})\leq r\cdot e(\mathcal{H}) - e(\mathcal{H}\setminus T),
\]
from which  it immediately follows that
\begin{equation}\label{eq4.3}
e(\mathcal{H}\setminus T) \leq r\varepsilon\binom{n}{r}.
\end{equation}

\begin{lemma}\label{lem:size-L}
	$|\mathscr{L}|< \varepsilon^{1/r}n^2/2, ~~ |L|<\varepsilon^{\frac{r-1}{r^2}}n$.
\end{lemma}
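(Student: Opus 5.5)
The plan is to count the edges of $\mathcal{T}$ that a sparse pair forces to be missing from $\mathcal{H}$, and compare this with the total number of missing edges, which closeness controls.

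\textbf{Missing edges.} Since $\mathcal{H}$ is $\varepsilon n^r$-close to $T_r(n,k)$, the number of edges of $\mathcal{T}$ absent from $\mathcal{H}$ should be $O(\varepsilon n^r)$. Closeness is stated relative to some labelled copy of $T_r(n,k)$, not to the particular $\mathcal{T}$ built on $\bm{\sigma}$, so we transfer it. By \eqref{eq4.3} and $e(\mathcal{H})\ge e(\mathcal{T})-O(\varepsilon n^r)$, we get
\[
|\mathcal{T}\setminus\mathcal{H}| = e(\mathcal{T})-e(\mathcal{H})+e(\mathcal{H}\setminus\mathcal{T}) \le e(\mathcal{T}) - e(\mathcal{H}) + r\varepsilon\binom{n}{r}.
\]
Here $e(\mathcal{T})\le t_r(n,k)$ because $T_r(n,k)$ maximizes the number of edges among complete $k$-partite $r$-graphs. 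Also $e(\mathcal{H})\ge t_r(n,k)-\varepsilon\binom{n}{r}$ by closeness. Together these give $|\mathcal{T}\setminus\mathcal{H}|\le (r+1)\varepsilon\binom{n}{r}$.

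\textbf{Each sparse pair forces many missing edges.} Take a sparse pair $\{u,v\}$ with $u\in V_i$, $v\in V_j$, $i\ne j$. Using \eqref{lem4.1}, the number of edges of $\mathcal{T}$ containing $\{u,v\}$ is
\[
\sum_{S}\prod_{l\in S}|V_l| \;\ge\; \binom{k-2}{r-2}\Big(\frac1k-\varepsilon^{1/r}\Big)^{r-2}n^{r-2} \;\ge\; c\,n^{r-2},
\]
where $S$ ranges over the $(r-2)$-subsets of $[k]\setminus\{i,j\}$ and $c=c(k,r)>0$. At most $d=h\binom{n}{r-3}=O(n^{r-3})$ of these edges lie in $\mathcal{H}$. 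So at least $(c/2)n^{r-2}$ of them are missing edges. Each missing edge contains at most $\binom r2$ pairs, so double counting gives
\[
|\mathscr{L}|\cdot \frac{c}{2}\,n^{r-2} \;\le\; \binom r2\,(r+1)\,\varepsilon\binom nr,
\]
that is, $|\mathscr{L}|=O_{k,r}(\varepsilon)\,n^2$. Since $\varepsilon<\varepsilon^{1/r}$ by a large factor when $\varepsilon$ is small, this yields $|\mathscr{L}|<\varepsilon^{1/r}n^2/2$.

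\textbf{Bound on $L$.} Each vertex of $L$ lies in at least $\varepsilon^{1/r^2}n$ sparse pairs, and each pair has two endpoints. Hence
\[
|L|\,\varepsilon^{1/r^2}n \le 2|\mathscr{L}| < \varepsilon^{1/r}n^2,
\]
which gives $|L|<\varepsilon^{1/r-1/r^2}n=\varepsilon^{(r-1)/r^2}n$.

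\textbf{Main obstacle.} The only delicate point is the transfer of closeness from an arbitrary copy of $T_r(n,k)$ to the specific partite graph $\mathcal{T}$ on $\bm{\sigma}$, handled above via $e(\mathcal{T})\le t_r(n,k)$ together with \eqref{eq4.3}. The rest consists of absorbing constants depending on $k$ and $r$ into the gap between $\varepsilon$ and $\varepsilon^{1/r}$, which is valid once $\varepsilon$ is small enough.
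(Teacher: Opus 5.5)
Your proof is correct and follows the paper's argument essentially step for step. You double count the missing edges of $\mathcal{T}$ through sparse pairs, bound $e(\mathcal{T}\setminus\mathcal{H})$ by $(r+1)\varepsilon\binom{n}{r}$ using closeness together with \eqref{eq4.3}, and average over $L$; you also make explicit the fact $e(\mathcal{T})\le t_r(n,k)$, which the paper uses only implicitly.
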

\begin{proof}
	We first derive an upper bound on $|\mathscr{L}|$.
Since	each missing edge contains at most $\binom{r}{2}$ sparse pairs, whereas each sparse pair
	belongs to at least
	$\binom{k-2}{r-2} (1/k - \varepsilon^{1/r})^{r-2} n^{r-2}-d$ missing edges by (\ref{lem4.1}),
	we have
	\[
	|\mathscr{L}|\left[\binom{k-2}{r-2} \Big(\frac{1}{k} - \varepsilon^{1/r}\Big)^{r-2} n^{r-2} - d\right]
	\leq \binom{r}{2} e(\mathcal{T}\setminus \mathcal{H}).
	\]
	Recall that $d = h \binom{n}{r-3}$. Using Bernoulli's inequality, we obtain
	\begin{equation}\label{eq:sec4-temporary}
		|\mathscr{L}| \left[\frac{1}{k^{r-2}} \binom{k-2}{r-2} (1 - kr\varepsilon^{1/r}) n^{r-2}\right] < \binom{r}{2} e(\mathcal{T}\setminus \mathcal{H}).
	\end{equation}
	Since $\mathcal{H}$ is $\varepsilon n^{r}$-close to $T_r(n,k)$, $e(\mathcal{H}) > t_r(n,k) - \varepsilon n^{r}$.
	It follows that $e(\mathcal{T}) < e(\mathcal{H}) + \varepsilon n^{r}$. Combining this with (\ref{eq4.3}), we have
	\begin{align}\label{eq:size-T-H}
		e(\mathcal{T}\setminus \mathcal{H}) = (e(\mathcal{T}) - e(\mathcal{H})) + e(\mathcal{H}\setminus \mathcal{T}) < (r+1) \varepsilon n^{r}.
	\end{align}
	By \eqref{eq:sec4-temporary} and \eqref{eq:size-T-H}, we deduce
	\begin{equation*}\label{eq:size-W}
		|\mathscr{L}| < \frac{(r + 1) k^{r-2} \binom{r}{2} n^{r} \varepsilon}{\binom{k-2}{r-2} (1 - kr\varepsilon^{1/r}) n^{r-2}}
		< \frac{1}{2} \varepsilon^{1/r} n^2.
	\end{equation*}
	Finally, recall that $L$ is the set of vertices which contained in at least $\varepsilon^{1/r^2} n$
	sparse pairs of $\mathcal{H}$. Hence, $|L| \varepsilon^{1/r^2} n \leq 2|\mathscr{L}| < \varepsilon^{1/r} n^2$, which
	yields that $|L| < \varepsilon^{\frac{r-1}{r^2}} n$.
\end{proof}
For vertices $u,v$ in the same vertex class $V_i$ ($i\in [k]$), the pair $\{u,v\}$ is called  \emph{dominant}  if the codegree of $u$ and $v$ is at least $d$.
Let $W$ be the set of vertices contained in at least $\theta n$ dominant pairs in $\mathcal{H}$, where $\theta >\varepsilon^{\frac{r-1}{r^2}}$.
For a vertex $u\in V$, let $E_u$  denote the edges containing $u$;
for a vertex $u\in V\setminus L$, let $DV_i(u)$ be the set of vertices in $V_i$ forming a dense pair with
 $u$. By the definition of $L$, $|DV_j(u)|\geq |V_j|-\varepsilon^{1/r^2}n$ for all distinct
 $i, j\in [k]$ and $u\in V_i\setminus L$.

\begin{lemma}\label{set}
For any $i\in [k]$ and any finite subset $S\subseteq V\setminus (L\cup V_i)$, there exist at least $h$ vertices in $V_i\setminus L$ that  form  a dense pair with every vertex in $S$.
\end{lemma}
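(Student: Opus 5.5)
Fix $i\in[k]$ and a finite set $S\subseteq V\setminus(L\cup V_i)$. We read "finite" as "of size bounded by a constant independent of $n$". In every later application $|S|\le h$ (typically $|S|\le k-1$, or $S$ is a subset of a copy of $tK_{k+1}^{(r)}$), so we assume $|S|\le s_0$ for a fixed constant $s_0$ depending only on $k,r,t$.

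\textbf{Step 1.} For each $u\in S$, $u\notin L$ and $u\notin V_i$, so by the observation recorded just before Lemma~\ref{set},
\[
|DV_i(u)|\ge |V_i|-\varepsilon^{1/r^2}n .
\]
The set of vertices of $V_i$ forming a dense pair with every vertex of $S$ is exactly $\bigcap_{u\in S}DV_i(u)$. All these sets lie inside $V_i$, so Lemma~\ref{capset} gives
\[
\Bigl|\bigcap_{u\in S}DV_i(u)\Bigr|\ \ge\ \sum_{u\in S}|DV_i(u)|-(|S|-1)|V_i|\ \ge\ |V_i|-|S|\,\varepsilon^{1/r^2}n .
\]

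\textbf{Step 2.} Remove the vertices of $L$. By Lemma~\ref{lem:size-L}, $|L|<\varepsilon^{(r-1)/r^2}n$. By \eqref{lem4.1}, $|V_i|\ge(1/k-\varepsilon^{1/r})n$. Hence the number of vertices of $V_i\setminus L$ forming dense pairs with all of $S$ is at least
\[
\Bigl(\frac1k-\varepsilon^{1/r}-s_0\varepsilon^{1/r^2}-\varepsilon^{(r-1)/r^2}\Bigr)n .
\]
For $\varepsilon$ small enough in terms of $k$ and $s_0$, this exceeds $n/(2k)$. For $n$ sufficiently large, $n/(2k)\ge h$, since $h$ is a constant.

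\textbf{Main obstacle.} The only delicate point is that $|S|$ must stay bounded. If $|S|$ were allowed to grow with $n$, the loss term $|S|\varepsilon^{1/r^2}n$ could swallow $|V_i|$. So in the write-up we make explicit that $|S|$ is bounded by a constant depending only on $k,r,t$, and that $\varepsilon$ is chosen after this constant.
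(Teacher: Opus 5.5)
Your proof is correct and follows the paper's argument: apply Lemma~\ref{capset} to the sets $DV_i(u)$, $u\in S$, each of size at least $|V_i|-\varepsilon^{1/r^2}n$ inside $V_i$, and then discard the fewer than $\varepsilon^{(r-1)/r^2}n$ vertices of $L$. Your arithmetic is tidier than the paper's display, which writes $|V_i|-\varepsilon^{(r-1)/r^2}n$ where the union should simply be bounded by $|V_i|$ before subtracting $|L|$. You also rightly make explicit that $|S|$ must be bounded by a constant fixed before $\varepsilon$, which the paper leaves implicit.
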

\begin{proof}
Let $S=\{u_1, u_2, \ldots, u_s\}$. Since $S\subseteq V\setminus (L\cup V_i)$,  the definition of $L$ gives $|DV_i(u_j)|\geq |V_i|-\varepsilon^{1/r^2}n$ for all $u_j\in S, j\in [s]$.
By Lemma \ref{capset},
\begin{eqnarray*}
\left|\bigcap_{j=1}^{s}DV_{i}(u_j)  \setminus L\right|
\geq  s(|V_{i}|-\varepsilon^{1/r^2}n)-(s-1)(|V_{i}|-\varepsilon^{\frac{r-1}{r^2}}n)
>h.
\end{eqnarray*}
This completes the proof. \end{proof}
\begin{lemma}\label{extention}
Let $e$ be a bad edge of $\mathcal H$. If there exists some $i\in [k]$ such that  $u, v\in e\cap (V_i\setminus L)$ for vertices $u$ and $v$,
then $\mathcal H$ contains a copy $\mathcal F$ of $K_{k+1}^{(r)}$ with $e\subseteq E(\mathcal F)$.
\end{lemma}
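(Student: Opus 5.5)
We build a copy of $K_{k+1}^{(r)}$ whose core consists of $u$, $v$ and one vertex from each class other than $V_i$. Choose the core vertices greedily using Lemma \ref{set}. The edge $e$ itself will serve as the expanded edge of the pair $\{u,v\}$. Every other core pair will be joined by its own expanded edge, and these edges will be kept pairwise disjoint outside the core.

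\textbf{Step 1: the core.} Relabel the classes so that $i=1$, and list the other classes as $V_2,\dots,V_k$. Set $S_1=\{u,v\}\subseteq V\setminus(L\cup V_2)$. Lemma \ref{set} gives at least $h$ vertices of $V_2\setminus L$ forming dense pairs with both $u$ and $v$. Among them choose $w_2\notin e$; this is possible because $|e|=r<h$. Now set $S_2=\{u,v,w_2\}$, which avoids $L\cup V_3$, and apply Lemma \ref{set} again to pick $w_3\in V_3\setminus (L\cup e)$ dense to every vertex of $S_2$. Continue in this way up to $w_k$. Put $C=\{u,v,w_2,\dots,w_k\}$. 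Then $|C|=k+1$, and every pair in $C$ other than $\{u,v\}$ lies in two different classes. Every such pair is dense by construction, so its codegree exceeds $d=h\binom{n}{r-3}$.

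\textbf{Step 2: the expansion.} Assign $e$ to the pair $\{u,v\}$. Process the remaining $\binom{k+1}{2}-1$ pairs one at a time. For the current pair $\{a,b\}$, let $X$ be the set of vertices already used: the core, the vertices of $e$, and the extra vertices of previously chosen edges. Then $|X|\le h$, since all of these vertices together lie in one copy of $K_{k+1}^{(r)}$ and $h\ge |V(K_{k+1}^{(r)})|$.

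We need an edge containing $\{a,b\}$ that meets $X$ only in $\{a,b\}$. An edge containing $\{a,b\}$ that also contains some $x\in X\setminus\{a,b\}$ is determined by its remaining $r-3$ vertices. Hence there are at most $|X|\binom{n}{r-3}\le h\binom{n}{r-3}=d$ such edges. The codegree of $\{a,b\}$ is strictly greater than $d$, so a suitable edge exists. Selecting it keeps the new $r-2$ vertices disjoint from everything used so far.

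The resulting family consists of $e$ together with one edge for each other core pair, all disjoint outside $C$. This is a copy $\mathcal F$ of $K_{k+1}^{(r)}$ containing $e$.

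\textbf{Main obstacle.} The delicate point is $e$ itself, which may contain vertices of $C$ beyond $u$ and $v$, or vertices that later edges would reuse. Choosing each $w_j$ outside $e$ in Step 1, and placing $e$ in $X$ from the start of Step 2, removes this problem. The counting in Step 2 only needs the strict inequality $|X|\binom{n}{r-3}<$ codegree, which holds since dense means codegree $>d$. If one worries that $|X|$ might slightly exceed $h$, $d$ has slack because $h$ counts all of $tK_{k+1}^{(r)}$, which is at least as large as $X$ for $t\ge1$. For $t=1$ a crude count still works, using that the core pairs are dense with margin once $n$ is large.
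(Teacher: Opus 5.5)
Your proof is correct and follows the same route as the paper. You pick the core $\{u,v,w_2,\dots,w_k\}$ greedily via Lemma~\ref{set}, then extend greedily using that dense pairs have codegree exceeding $d=h\binom{n}{r-3}$, with $e$ itself serving as the edge for $\{u,v\}$. You also supply details the paper leaves implicit: choosing each $w_j\notin e$ so that $e$ meets the core only in $\{u,v\}$, and the count showing fewer than $d$ edges through $\{a,b\}$ meet the used set elsewhere. Your closing hedge about $t=1$ is unnecessary, since $|X|\le |V(K_{k+1}^{(r)})|\le h$ holds for every $t\ge 1$.
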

\begin{proof}
Without loss of generality, assume  $u, v\in V_1$.
By Lemma \ref{set}, there exist vertices
 $w_{i}\in V_i$ for  $i=2, \ldots, k$ such that every pair  in $\{u, v, w_{2}, \ldots, w_{k}\}$, except for $\{u, v\}$, is a dense pair.
Thus $\partial (\mathcal H[u, v,w_{2},\ldots,w_{k}])\cong K_{k+1}$,  and we may  extend this subgraph to a copy of $K^{(r)}_{k+1}$.
\end{proof}

\begin{lemma}\label{WL}
$|W\setminus L|\leq t-1$.
\end{lemma}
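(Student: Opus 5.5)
By contradiction: suppose $|W\setminus L|\ge t$, and pick distinct vertices $w_1,\dots,w_t\in W\setminus L$. The plan is to build, greedily and vertex-disjointly, a copy of $K_{k+1}^{(r)}$ through each $w_j$. That gives $tK_{k+1}^{(r)}\subseteq\mathcal H$, contradicting the choice of $\mathcal H$.

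The copy at a fixed $w=w_j$ is assembled as follows. Suppose $w\in V_i$. Since $w\in W$, it lies in at least $\theta n$ dominant pairs. Each dominant pair $\{w,v\}$ has $v\in V_i$, because dominant pairs live inside a single class. Since $|L|<\varepsilon^{\frac{r-1}{r^2}}n<\theta n$ by Lemma~\ref{lem:size-L} and the choice of $\theta$, we can pick many partners $v\in V_i\setminus L$ with $\{w,v\}$ dominant. More precisely, at least $\theta n-|L|-th$ of them avoid $L$ and the at most $th$ vertices already used by earlier copies.

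Fix such a $v$. Apply Lemma~\ref{set} to $S=\{w,v\}\subseteq V\setminus(L\cup V_{i'})$ for one $i'\ne i$ at a time, enlarging $S$ each time by the vertex just chosen. This produces $x_{i'}\in V_{i'}\setminus L$ for every $i'\ne i$ such that all pairs among $\{w,v\}\cup\{x_{i'}\}$, other than $\{w,v\}$, are dense. At each step Lemma~\ref{set} supplies at least $h$ candidates, so we can avoid the at most $h$ previously used vertices. The pair $\{w,v\}$ itself is dominant, so its codegree is at least $d$.

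The pairs of this $(k+1)$-set then all have codegree at least $d=h\binom{n}{r-3}$. We choose, for each of the $\binom{k+1}{2}$ pairs, an edge containing it whose remaining $r-2$ vertices avoid everything used so far. The used set $U$ has size at most $h$. The number of edges through a given pair that meet $U$ outside the pair is at most $|U|\binom{n}{r-3}$. This is where $d=h\binom{n}{r-3}$ is designed to work, but the inequality is tight, so I would note that $|U|<h$ strictly (the current copy is incomplete), or simply replace $h$ by a slightly larger constant implicit in the codegree bound. The greedy step thus succeeds and gives a copy of $K_{k+1}^{(r)}$ containing $w$, disjoint from the earlier copies. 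This mirrors Lemma~\ref{extention}, but the $(k+1)$-set is built around a dominant pair instead of a bad edge.

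The main obstacle is bookkeeping disjointness across the $t$ copies: the vertices $w_{j'}$ for $j'>j$ must stay reserved and must not be consumed by earlier copies. We handle this by adding $\{w_1,\dots,w_t\}$ to the forbidden set from the start. This costs only $t\le h$ extra vertices, which is absorbed since Lemma~\ref{set} actually gives far more than $h$ common dense neighbours (its proof gives about $|V_i|-s\varepsilon^{1/r^2}n$), and since the codegree slack can be taken as $d-(h+t)\binom{n}{r-3}$ after adjusting constants. With $t$ disjoint copies of $K_{k+1}^{(r)}$ we reach the contradiction, so $|W\setminus L|\le t-1$.
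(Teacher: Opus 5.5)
Your proof is correct and follows the paper's argument: take $t$ vertices of $W\setminus L$, give each a dominant partner outside $L$, complete to a $(k+1)$-set of dense pairs via Lemma~\ref{set}, and greedily cover the pairs by edges that are disjoint outside the core (this is Lemma~\ref{extention} applied to an edge through the dominant pair). Your disjointness bookkeeping is more explicit than the paper's. No constants need adjusting: the reserved vertices $w_{j'}$ are themselves part of the final $h$-vertex configuration, so at most $h-r$ forbidden vertices lie outside the current pair, and $(h-r)\binom{n-3}{r-3}<d$.
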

\begin{proof}
Suppose for a  contradiction that there exist vertices $u_1,\ldots,u_t\in W\setminus L$.  Since $u_i, i\in [t]$ is contained in at least $\theta n$ dominant pairs and $|L|\leq \varepsilon^{\frac{r-1}{r^2}}n$, there exist  vertices $w_{1},\ldots w_{t}\notin L$ such that $\{u_i,w_i\}$ is a dominant pair for each $i\in [t]$.
By Lemmas \ref{set} and \ref{extention}, there exist $t$ copies of $K^{(r)}_{k+1}$ in $\mathcal H$.
Since every dense pair has codegree greater than $d$, we may choose the additional vertices in these extensions greedily to ensure the $t$ copies of $K^{(r)}_{k+1}$ are vertex-disjoint.
This contradicts the fact that $\mathcal H$ is $tK^{(r)}_{k+1}$-free,
so we conclude $|W\setminus L|\le t-1$.
\end{proof}
\begin{lemma}\label{degree}
For any vertex $u\in V_i\setminus (W\cup L)$, where $ i\in [k]$, there exist at most $h-1$ vertices in $V_i\setminus (W\cup L)$ that form a dominant pair with $u$.
\end{lemma}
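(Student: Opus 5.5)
We argue by contradiction. Fix $u\in V_i\setminus(W\cup L)$ and suppose there are $h$ vertices $w_1,\dots,w_h\in V_i\setminus(W\cup L)$ such that each $\{u,w_j\}$ is a dominant pair. The aim is to produce $t$ vertex-disjoint copies of $K_{k+1}^{(r)}$, contradicting that $\mathcal H$ is $tK_{k+1}^{(r)}$-free. Since $u$ lies in every one of these pairs, the copies cannot all be built around $u$; the hypothesis on $u$ has to be converted into a usable count of edges.

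The first step is to pass from dominant pairs to bad edges. Each dominant pair $\{u,w_j\}$ has codegree at least $d=h\binom{n}{r-3}$, and every edge containing it is a bad edge with two vertices in $V_i\setminus L$. By Lemma~\ref{extention}, each such edge extends to a copy of $K_{k+1}^{(r)}$. The second step is to use the maximality of $f_{\mathcal H}(\bm\sigma)$. Moving $u$ to another class $V_j$ must not increase $f_{\mathcal H}$. Because $u\notin L$, $u$ forms dense pairs with almost all of $V_j$ for every $j\ne i$. Hence the number of edges through $u$ meeting $V_i$ in at least two vertices is at least the number meeting $V_j$ in at least two vertices, and the latter is $\Omega(n^{r-1})$ edges of $\mathcal T$ involving $u$ and $V_j$. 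Combined with $u\notin W$ (so $u$ has fewer than $\theta n$ dominant partners), this should force $u$ to have many dominant partners, in contradiction with $u\notin W$ once $\theta$ is chosen suitably. I am not confident this route closes cleanly.

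A more direct plan is to build disjoint copies from different dominant pairs $\{u,w_j\}$ and use $h$ partners to absorb overlaps. Since $u$ is shared, that alone gives only one copy. So I would instead show that such a $u$ together with $h$ dominant partners can be swapped into the structure to find copies avoiding $u$. This suggests the real mechanism is different: the lemma likely follows by combining the dominant pairs at $u$ with Lemma~\ref{set}. Inside $V_i\setminus(W\cup L)$, each $w_j$ also lies outside $W$. Choosing $k-1$ common dense neighbours via Lemma~\ref{set}, the vertices $u,w_1,\dots,w_h$ together with the bad edges through each $\{u,w_j\}$ can be routed to use disjoint extra vertices, since the codegree is at least $d$ and at most $h\binom{n}{r-3}$ edges are blocked by any $h$ fixed vertices. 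Any $K_{k+1}^{(r)}$ found this way uses at most $h$ vertices, so greedily at most $h-1$ previously used vertices can interfere.

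The main obstacle is producing $t$ disjoint copies when all the given dominant pairs share $u$. I would first try to combine Lemma~\ref{extention} with the fact that $\mathcal H$, having maximum $p$-spectral radius, may be assumed to contain $(t-1)$ vertex-disjoint copies of $K_{k+1}^{(r)}$ far from $u$. Such copies are readily found in the dense $k$-partite part on vertices of $V\setminus(L\cup W)$ via Lemma~\ref{set}. With the $t$-th copy through $u$ obtained from Lemma~\ref{extention}, chosen disjoint by the codegree bound $d$, this yields $tK_{k+1}^{(r)}\subseteq\mathcal H$, a contradiction. The count of $h$ partners is what allows the copy through $u$ to avoid the at most $h(t-1)$ vertices of the other copies.
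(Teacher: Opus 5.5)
There is a genuine gap. Your last paragraph has the right skeleton: find $t-1$ vertex-disjoint copies of $K_{k+1}^{(r)}$ in $\mathcal H$ that avoid $u$, then build a $t$-th copy through a dominant pair $\{u,w\}$ with $w$ outside them. But producing those $t-1$ copies is the heart of the proof, and your justification fails.

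You say they are ``readily found in the dense $k$-partite part on $V\setminus(L\cup W)$ via Lemma~\ref{set}''. Lemma~\ref{set} only supplies dense pairs between \emph{different} classes, and a $k$-partite shadow contains no $K_{k+1}$. Every copy of $K_{k+1}^{(r)}$ has two core vertices in one class, so it needs an edge covering an intra-class pair. The only such pairs you have are the dominant pairs through $u$, which, as you noticed yourself, only seed copies containing $u$. Concretely, in $K_{t-1}^{r}\vee T_r(n-t+1,k)$, deleting the $t-1$ join vertices leaves a $K_{k+1}^{(r)}$-free hypergraph, so no copies live in the ``dense $k$-partite part''.

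Relatedly, your final argument never uses $u\notin W$, and any correct proof must. For $t\ge 2$, a join vertex $u$ of that extremal graph forms dominant pairs with its whole class, so the conclusion fails for $u\in W$. Also, only $t-2$ disjoint copies avoid such a $u$, because every copy must use a join vertex.

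The $f_{\mathcal H}(\bm{\sigma})$ route you abandoned cannot help either. Partition optimality controls edges at $u$ only at scale $n^{r-1}$, while even $\theta n$ dominant partners guarantee only $O(n^{r-2})$ edges. So it cannot yield a constant bound like $h-1$, which must come from $tK_{k+1}^{(r)}$-freeness.

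The missing idea is a saturation step anchored at $u$:
\begin{itemize}
\item Since $u\notin W$, some $v\in V_i$ has $\{u,v\}$ non-dominant. Hence there is an $r$-set $f\supseteq\{u,v\}$ with $f\notin E(\mathcal H)$.
\item By the maximality of $\mathcal H$, the hypergraph $\mathcal H+f$ contains a copy $\mathcal F_0$ of $tK_{k+1}^{(r)}$ with $f\in E(\mathcal F_0)$.
\item The $t-1$ components of $\mathcal F_0$ not containing $f$ lie in $\mathcal H$ and avoid $u$, because $u\in f$ and the components are vertex-disjoint.
\item They span $(t-1)\bigl(k+1+(r-2)\binom{k+1}{2}\bigr)\le h-1$ vertices. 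So one of your $h$ dominant partners $w$ of $u$ lies outside them.
\item Lemmas~\ref{set} and~\ref{extention}, with codegree at least $d$ for greedy disjointness, then give the $t$-th copy through $\{u,w\}$ avoiding the others.
\end{itemize}

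Your count ``at most $h(t-1)$ vertices'' is also off in the direction that matters. Having $h$ partners lets you dodge only $h-1$ vertices. This works precisely because the $t-1$ copies have fewer than $h$ vertices.
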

\begin{proof}
Suppose for a contradiction that there exists a vertex $u\in V_1\setminus (W\cup L)$ such that  at least $h$ vertices in $ V_1\setminus (W\cup L)$  form a dominant pair with $u$.
Since $u\not\in W$, there exists at least one vertex $v\in V_1$ such that $\{u,v\}$ is not a dominant pair of $\mathcal{H}$.
It follows that there exists an $r$-set $f\subset V(\mathcal H)$ containing $\{u,v\}$ with $f\notin E(\mathcal{H})$.
By the maximality of $\mathcal{H}$, the $r$-graph $\mathcal{H}'=\mathcal{H}+f$ contains a copy of $tK_{k+1}^{(r)}$, denoted   $\mathcal F_0$, with $f\in E(\mathcal F_0)$.
Moreover, $\mathcal F_0$ contains a copy of $(t-1)K_{k+1}^{(r)}$, denoted $\mathcal F_1$, satisfying $u\notin V(\mathcal F_1)$.
By  the assumption of $u$, there exists a vertex $w\in V_1\setminus (W\cup L\cup V(\mathcal F_1)$  such that $\{u,w\}$ is a dominant pair in $\mathcal H$.
By  Lemmas \ref{set} and \ref{extention}, $\mathcal H$ contains  a copy of $K^{(r)}_{k+1}$, denoted $\mathcal F_2$, with $V(\mathcal F_2)\cap V(\mathcal F_1)=\emptyset$.
Hence $\mathcal F_1\cup \mathcal F_2\cong tK_{k+1}^{(r)}\subseteq\mathcal H$, a contradiction.
\end{proof}

Define $D_i=\{v\in V_i : \text{ there exists at least one } v'\in  V_i\setminus (W\cup L) \text{ such that } \{v,v'\}$ $ \text{ is dominant}\}$ and  $T_i=V_i\setminus (D_i\cup W\cup L)$.

\begin{lemma}\label{independentset}
For each $i\in [k]$, the set $V_i\setminus (W\cup L)$ contains at most $f(t-1,h-1)$ dominant pairs. Moreover, $|D_i|\leq 2f(t-1,h-1)$ and $|T_i|\geq |V_i\setminus (W\cup L)|-2f(t-1,h-1)$.
\end{lemma}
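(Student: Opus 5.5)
The plan is to apply Lemma~\ref{matching-delta-function} to the auxiliary graph $G_i$ on vertex set $V_i\setminus (W\cup L)$ whose edges are the dominant pairs inside this set. Lemma~\ref{degree} already gives $\Delta(G_i)\le h-1$. Once we also know $\beta(G_i)\le t-1$, Lemma~\ref{matching-delta-function} yields $e(G_i)\le f(t-1,h-1)$, which is the first assertion.

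The main step is the bound on the matching number, which I would prove by contradiction. Suppose $G_i$ contains $t$ pairwise disjoint dominant pairs $\{u_1,w_1\},\dots,\{u_t,w_t\}$, all in $V_i\setminus(W\cup L)$. For each $j$ we construct a copy of $K_{k+1}^{(r)}$ containing $u_j,w_j$ as core vertices, as in Lemma~\ref{extention}.
\begin{itemize}
\item Using Lemma~\ref{set} with $S=\{u_j,w_j\}$ plus previously chosen core vertices, pick core vertices $x^{(j)}_\ell\in V_\ell\setminus L$ for $\ell\neq i$. These must be dense with $u_j,w_j$ and with the other chosen core vertices of the same copy.
\item Since Lemma~\ref{set} supplies at least $h$ candidates in each step, we can avoid all vertices already used by earlier copies. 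At most $h$ vertices are ever used, because $h=|V(tK_{k+1}^{(r)})|$.
\item Then choose, greedily, the $r-2$ expansion vertices for each of the $\binom{k+1}{2}$ core pairs of each copy, keeping them disjoint from everything used so far.
\end{itemize}
The greedy step works because every core pair is dense or dominant, so it has codegree more than $d=h\binom{n}{r-3}$. The used vertices number fewer than $h$, and the edges through a fixed pair that meet a used vertex number at most $h\binom{n}{r-3}$. Hence an unused extension edge is always available. This produces $tK_{k+1}^{(r)}\subseteq\mathcal H$, a contradiction, so $\beta(G_i)\le t-1$. This is the same argument as in Lemma~\ref{WL}, and the only care needed is this disjointness bookkeeping.

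For the second assertion, note that every vertex of $D_i$ lying in $V_i\setminus(W\cup L)$ is a non-isolated vertex of $G_i$, so there are at most $2e(G_i)\le 2f(t-1,h-1)$ of them. I read the intended statement as counting $D_i\setminus(W\cup L)$, which is all that $T_i$ needs, since $W\cup L$ is removed from $V_i$ anyway.

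A vertex $v\in D_i\cap (W\cup L)$ dominant with some $v'\in V_i\setminus(W\cup L)$ is not an edge of $G_i$, so it is not covered by this count. If the literal bound on $|D_i|$ is required, I would handle such $v$ with the same matching argument, now applied to the bipartite graph of dominant pairs between $(W\cup L)\cap V_i$ and $V_i\setminus(W\cup L)$. For $v\in L$, however, Lemma~\ref{extention} does not directly apply, and this is the obstacle I expect. So I would settle for the bound on $D_i\setminus(W\cup L)$.

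Finally, $|T_i|=|V_i\setminus(W\cup L)|-|D_i\setminus(W\cup L)|\ge |V_i\setminus(W\cup L)|-2f(t-1,h-1)$.
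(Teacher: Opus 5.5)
Your proof is correct and follows the paper's argument: the same auxiliary graph $G_i$ of dominant pairs, $\Delta(G_i)\le h-1$ from Lemma~\ref{degree}, $\beta(G_i)\le t-1$ by greedily building $t$ vertex-disjoint copies of $K_{k+1}^{(r)}$ (your version spells out the disjointness bookkeeping that the paper only asserts), and then Lemma~\ref{matching-delta-function}. Restricting the count to $D_i\setminus(W\cup L)$ is exactly what the paper's step ``$|D_i|\le 2e(G_i)$'' actually justifies, since vertices of $D_i$ lying in $W\cup L$ are not vertices of $G_i$; this weaker bound suffices both for $T_i$ and for the later use in Lemma~\ref{lem:down-bound}, where the relevant edges avoid $W$ and $L=\emptyset$ is already known.
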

\begin{proof}
For each $i\in [k]$, let $G_i$ denote the simple graph on $V_i\setminus (W\cup L)$ whose edges correspond to dominant pairs in $\mathcal{H}$. Lemma \ref{degree} implies $\Delta(G_i)\leq h-1$ for each $i\in [k]$. We claim that $\beta(G_i)\leq t-1$ for each $i\in [k]$.
Suppose for a contradiction that there exist $2t$ distinct vertices $u_1,\ldots,u_t,$ $v_1\ldots, v_t\in V_1\setminus (W\cup L)$ such that $\{u_j,v_j\}$ is a dominant pair in $\mathcal{H}$ for each $j\in [t]$.
By Lemmas \ref{set} and \ref{extention}, $\mathcal H$ contains $t$ copies of $K^{(r)}_{k+1}$.
Furthermore, the codegree condition ensures that the additional vertices used in these extensions can be chosen
to be vertex-disjoint for
each $j$. It follows that $\mathcal H$ contains $t$ vertex-disjoint copies of $K^{(r)}_{k+1}$, a contradiction.
Hence $\beta(G_i)\le t-1$.

For each $i\in [k]$, since $\beta(G_i)\leq t-1$ and $\Delta(G_i)\leq h-1$, Lemma \ref{matching-delta-function} yields $e(G_i)\leq f(t-1,h-1)$.
Thus $V_i\setminus (W\cup L)$   contains at most  $f(t-1,h-1)$  dominant pairs.
By the definition of $D_i$, we have $|D_i|\leq 2e(G_i)\leq 2f(t-1,h-1)$.
Finally, since $T_i=V_i\setminus (D_i\cup(W\cup L)$, it follows that $|T_i|\geq |V_i\setminus (W\cup L)| - 2f(t-1,h-1).$
\end{proof}

For a vertex $u\in V, X\subseteq V$, we use $E_X(u)$ to denote set of  edges containing $u$ and intersecting $X\setminus \{u\}$ and let  $e_X(u)=|E_X(u)|$.
By Lemma \ref{lem:size-L}, we have
\begin{align} \label{ineq:size-Lu}
	e_L(u)\leq |L|\binom{n-2}{r-2} <\ell:=\varepsilon^{\frac{3}{2r^2}}n^{r-1}.
\end{align}

\begin{lemma}\label{claimA}
For a vertex $u\in V(\mathcal H)\setminus W$ and each $i\in[k]$,
let $E_i(u)$ denote the set of edges containing $u$ and at least one
non-dominant pair in $V_i\setminus( W\cup L\cup\{u\})$.
Then $|E_i(u)| = O(n^{r-2})$.
\end{lemma}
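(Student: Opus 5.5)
The plan is to count the edges of $E_i(u)$ through the pairs they contain. If $e\in E_i(u)$ contains the non-dominant pair $\{v,w\}\subseteq V_i\setminus(W\cup L\cup\{u\})$, then $e$ contains the triple $\{u,v,w\}$. Non-dominance gives $\mathrm{codeg}_{\mathcal H}(v,w)<d=h\binom{n}{r-3}=O(n^{r-3})$. Let $G_u$ be the auxiliary graph on $V_i\setminus(W\cup L\cup\{u\})$ whose edges are the non-dominant pairs $\{v,w\}$ with $\{u,v,w\}$ in some edge of $\mathcal H$. Then
\[
|E_i(u)|\le \sum_{\{v,w\}\in E(G_u)} \mathrm{codeg}_{\mathcal H}(v,w)< d\cdot e(G_u),
\]
so it suffices to show $e(G_u)=O(n)$. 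I would get this from a bound $\Delta(G_u)\le C$ with $C$ depending only on $k,r,t$, since then $e(G_u)\le Cn/2$.

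\textbf{Step 1 (mechanism).} Every $e\in E_i(u)$ is a bad edge with two vertices $v,w\in V_i\setminus L$. By Lemma~\ref{extention}, $e$ lies in a copy of $K_{k+1}^{(r)}$, and the remaining vertices of that copy can be taken among many candidates: Lemma~\ref{set} gives at least $h$ choices in each class, and dense pairs have codegree more than $d$. This flexibility is what allows such a copy to be placed disjointly from any prescribed set of at most $h$ vertices.

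\textbf{Step 2 (bounded degree via maximality).} Suppose some $v$ has $G_u$-neighbours $w_1,\dots,w_{h}$ (or $w_1,\dots,w_{2h}$). Since $\{v,w_1\}$ is not dominant, some $r$-set $f\supseteq\{v,w_1\}$ is a non-edge. Maximality of $\mathcal H$ then forces $\mathcal H+f$ to contain a $tK_{k+1}^{(r)}$ using $f$, exactly as in the proof of Lemma~\ref{degree}. Removing the copy through $f$ leaves a $(t-1)K_{k+1}^{(r)}$, call it $\mathcal F_1$, on at most $h$ vertices and avoiding $v$. If also $u\notin V(\mathcal F_1)$, choose $w_j\notin V(\mathcal F_1)$ and an edge $e_j\supseteq\{u,v,w_j\}$ with $e_j\cap V(\mathcal F_1)=\emptyset$. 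By Step 1, extend $e_j$ to a $K_{k+1}^{(r)}$ disjoint from $\mathcal F_1$. This yields $tK_{k+1}^{(r)}\subseteq\mathcal H$, a contradiction, so $\Delta(G_u)<2h$.

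\textbf{Main obstacle.} The difficulty in Step 2 is ensuring that $u$ and the edges $e_j$ avoid $V(\mathcal F_1)$, since all the $e_j$ share $u$ and cannot be made mutually disjoint.

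My first approach is to choose the non-edge $f$ to contain $u$ as well, i.e. $f\supseteq\{u,v,w_1\}$. Then the copy through $f$ contains both $u$ and $v$, so $\mathcal F_1$ avoids both. If every $r$-set through $\{u,v,w_1\}$ is an edge, the triple $\{u,v,w_1\}$ has codegree $\binom{n-3}{r-3}$, which is compatible with $\mathrm{codeg}(v,w_1)<d$. In that case I fall back to an $f\supseteq\{v,w_1\}$ avoiding $u$. If $u\in V(\mathcal F_1)$, I use $u\notin W$ and $|L|=o(n)$: then $u$ is in fewer than $\theta n$ dominant pairs. This should let me either replace the copy of $\mathcal F_1$ containing $u$ by one avoiding it, rerouting through dense pairs via Lemma~\ref{set}, or bound the number of $w_j$ directly.

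Each edge $e_j$ needs its own $r-3$ outside vertices disjoint from $V(\mathcal F_1)$. This follows because $\mathcal F_1$ has at most $h$ vertices, while the $e_j$ may be chosen among many edges through $\{u,v,w_j\}$. If a given triple has few such edges, its contribution is already small and can be absorbed into the $O(n^{r-2})$ bound.
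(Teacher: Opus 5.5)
There is a genuine gap, and it is the obstacle you flag yourself. Your degree bound needs the leftover $(t-1)K_{k+1}^{(r)}$, $\mathcal F_1$, to avoid both $u$ and $v$, and you never secure this. Taking the non-edge $f\supseteq\{u,v,w_1\}$ fails whenever all $r$-sets through $\{u,v,w_1\}$ are edges. For $r=3$ it \emph{always} fails: the only $3$-set through $\{u,v,w_1\}$ is $\{u,v,w_1\}$ itself, which is an edge by the definition of $G_u$. In the fallback ($f\supseteq\{v,w_1\}$, $u\notin f$) the copy $\mathcal F_1$ may contain $u$. Your sentence ``this should let me either replace the copy \dots\ or bound the number of $w_j$ directly'' is a hope rather than an argument, so $\Delta(G_u)<2h$ is not established.

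A second problem arises even when $u\notin V(\mathcal F_1)$. Your $G_u$ only guarantees \emph{one} edge through $\{u,v,w_j\}$, and every such edge may meet $V(\mathcal F_1)$, so the edge $e_j$ you want to extend need not exist. Your closing remark about absorbing triples with few edges points at the fix. However, it is inconsistent with your main estimate $|E_i(u)|<d\cdot e(G_u)$, which charges every pair of $G_u$ the full $d$ and therefore needs $e(G_u)=O(n)$ for all pairs, light ones included.

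The missing idea is to choose the non-edge through $u$ alone, once and for all, and to aim for a bounded vertex cover rather than a bounded maximum degree. Since $u\notin W$, there is an $r$-set $f\ni u$ with $f\notin E(\mathcal H)$. Maximality gives a $tK_{k+1}^{(r)}$ in $\mathcal H+f$, and discarding the copy through $f$ leaves a fixed $\mathcal F_2\cong(t-1)K_{k+1}^{(r)}$ with $u\notin V(\mathcal F_2)$ and $|V(\mathcal F_2)|<h$.

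Call a non-dominant pair $\{v,w\}\subseteq V_i\setminus(W\cup L\cup\{u\})$ \emph{heavy} if $\{u,v,w\}$ lies in at least $h\binom{n-4}{r-4}$ edges (for $r=3$: if $\{u,v,w\}\in E(\mathcal H)$). Suppose a heavy pair had $v,w\notin V(\mathcal F_2)$. Then some edge through $\{u,v,w\}$ avoids $V(\mathcal F_2)$, and Lemmas~\ref{set} and~\ref{extention}, with greedy choices, extend it to a $K_{k+1}^{(r)}$ disjoint from $\mathcal F_2$, giving $tK_{k+1}^{(r)}\subseteq\mathcal H$. Hence every heavy pair meets $V(\mathcal F_2)$, so there are fewer than $h|V_i|$ of them, each in at most $\binom{n-3}{r-3}$ edges. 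The $O(n^2)$ light pairs each lie in fewer than $h\binom{n-4}{r-4}$ edges. Both totals are $O(n^{r-2})$.

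This is essentially the paper's argument, which phrases it as: the heavy graph has fewer than $h$ vertices of degree exceeding $h$. Vertices of $V(\mathcal F_2)$ may have unbounded degree there. That is exactly why insisting on a global bound $\Delta(G_u)\le C$ led you into the obstacle.
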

\begin{proof}
For each $i \in [k]$, we construct a graph $G_i$ on vertex set $V_i \setminus (W \cup L)$,  in which
 $v$ and $w$ are adjacent   if $\{v,w\}$ is a non-dominant pair and the triple $\{u,v,w\}$ is contained in at least $h\binom{n-4}{r-4}$ edges of $\mathcal{H}$.

We first show that  in $G_i$,  the number of vertices with degree greater than  $h$ is at most $h-1$.
 Suppose for a contradiction that there exist at least $h$ such vertices. Since $u \notin W$, we may choose an $r$-set $f$ with $u \in f$ and $f \notin E(\mathcal H)$. Define $\mathcal H' = \mathcal H + f$. By the maximality of $\mathcal H$,  $\mathcal H'$ contains a copy of $tK_{k+1}^{(r)}$, denoted $\mathcal F_0$, such that $f\in E(\mathcal F_0)$. Let $\mathcal F_1$ be a subhypergraph of $\mathcal F_0$ isomorphic to $K_{k+1}^{(r)}$ that contains $u$, and set $\mathcal F_2=\mathcal F_0\setminus\mathcal F_1$, so $\mathcal F_2\cong (t-1)K_{k+1}^{(r)}$. Then $u\notin V(\mathcal F_2)$.
 Since $G_i$ has at least $h$ vertices of degree greater than $h$, we may pick a vertex $w\in V(G_i)\setminus V(\mathcal F_2)$ with $d_{G_i}(w)\ge h$. By the definition of $G_i$, there exists a vertex $w'\in V(G_i)\setminus V(\mathcal F_2)$ such that $\{w,w'\}$ is a non-dominant pair and the triple $\{u,w,w'\}$ is contained in at least $h\binom{n-4}{r-4}$ edges of $\mathcal{H}$. By  Lemma~\ref{set}, there exist vertices $w_2,\dots,w_k\notin V(\mathcal F_2)$ such that every pair in  $\{w,w',w_2,\dots,w_k\}$, except for
 $\{w, w'\}$, is a  dense pair and
\[
\partial(\mathcal H[w,w',w_2,\dots,w_k])\cong  K_{k+1}.
\]

Since $\{u,w,w'\}$ is contained in at least $h\binom{n-4}{r-4}$ edges, there exists an edge  $e\in E(\mathcal H)$ such that $\{u,w,w'\}\subseteq e$ and  $e\cap V(\mathcal F_2)=\emptyset$.
We then extend this copy of $K_{k+1}$ to  a copy of $K^{(r)}_{k+1}$, denoted $\mathcal F'$,
satisfying  $V(\mathcal F')\cap V(\mathcal F_2)=\emptyset$. Hence $\mathcal H $ contains a copy of $tK_{k+1}^{(r)}$, a contradiction.
It follows that $G_i$ has at most $h-1$ vertices of degree exceeding $h$. Consequently, $$e(G_i)< \frac12\bigl(h(|V_i|-1) + (|V_i|-h)h\bigr) <  h|V_i|.$$
We thus obtain
$
|E_i(u)|\leq e(G_i)\binom{n-3}{r-3}
        +\Bigl(\binom{|V_i|}{2}-e(G_i)\Bigr)h\binom{n-4}{r-4}
        =O(n^{r-2}).
$
\end{proof}

We next establish an upper bound on degrees of vertices in $L$.
\begin{lemma}\label{lem:degree-vertex-in-L}
For each $u\in L$, we have
\[
    d_{\mathcal H}(u)
    < \binom{k-1}{r-1}\left(\frac{n}{k}\right)^{r-1}-2\ell.
\]
\end{lemma}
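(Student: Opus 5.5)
Fix $u\in L$, say $u\in V_1$. I will split the edges containing $u$ into those lying in $\mathcal T$ (meeting each class in at most one vertex) and the bad edges through $u$, and bound each class separately.

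\textbf{Edges of $\mathcal T$ through $u$.} These edges correspond to $(r-1)$-sets of $V\setminus V_1$ that meet each class in at most one vertex. Their total number is
\[
\sum_{S\in\binom{[2,k]}{r-1}}\prod_{j\in S}|V_j|\le \binom{k-1}{r-1}\Bigl(\frac{n}{k}\Bigr)^{r-1}\bigl(1+O(\varepsilon^{1/r})\bigr),
\]
by \eqref{lem4.1}. Now $u\in L$ lies in at least $\varepsilon^{1/r^2}n$ sparse pairs $\{u,v\}$ with $v\notin V_1$. Each such pair has codegree at most $d=h\binom{n}{r-3}=O(n^{r-3})$. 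On the other hand, in $\mathcal T$ each such $v$ completes about $\binom{k-2}{r-2}(n/k)^{r-2}$ crossing edges with $u$. So at least
\[
\varepsilon^{1/r^2}n\Bigl(\tbinom{k-2}{r-2}(\tfrac1k-\varepsilon^{1/r})^{r-2}n^{r-2}-d\Bigr)\Big/(r-1)=\Omega\bigl(\varepsilon^{1/r^2}n^{r-1}\bigr)
\]
crossing $(r-1)$-sets are missing from the link of $u$. The division by $r-1$ accounts for overcounting, since each missing set contains at most $r-1$ partners $v$. Hence the number of edges of $\mathcal H\cap\mathcal T$ through $u$ is at most $\binom{k-1}{r-1}(n/k)^{r-1}-c\,\varepsilon^{1/r^2}n^{r-1}$ for a constant $c=c(k,r)>0$, once the $O(\varepsilon^{1/r})n^{r-1}$ error from unbalanced parts is absorbed. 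This absorption works because $\varepsilon^{1/r}\ll\varepsilon^{1/r^2}$.

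\textbf{Bad edges through $u$.} Here I use the optimality of $\bm\sigma$. Moving $u$ to another class $V_j$ cannot increase $f_{\mathcal H}$. Comparing the counts shows that, up to a factor depending on $r$, the number of edges through $u$ meeting $V_1\setminus\{u\}$ is at most the number meeting $V_j\setminus\{u\}$, for every $j$. I expect this step to be the main obstacle: a crude bound on the bad edges through $u$ is not small. I would try to show that the bad edges at $u$ number $O(\varepsilon^{1/r}n^{r-1})$, or at least $o(\varepsilon^{1/r^2}n^{r-1})$, as follows.
\begin{itemize}
\item Bad edges through $u$ that contain two vertices of some $V_i\setminus L$ forming a dominant pair are few. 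By Lemma~\ref{independentset}, the sets $V_i\setminus(W\cup L)$ contain only $O(1)$ dominant pairs. The vertices of $W\setminus L$ number at most $t-1$ (Lemma~\ref{WL}) and contribute $O(n^{r-2})$.
\item Bad edges containing only non-dominant pairs are handled as in Lemma~\ref{claimA}. Since $u\in L$ is not necessarily outside $W$, I would apply the argument of Lemma~\ref{claimA} directly if $u\notin W$.
\item Otherwise, I would bound these edges via the non-dominance codegree bound. Each non-dominant pair has codegree less than $d$, so the pairs inside classes give at most $n^2 d=O(n^{r-1}h)$ edges, which is still too big.
\end{itemize}
The cleaner route is therefore the maximality of $\mathcal H$ combined with the partition swap: if $u$ had many bad edges, then moving $u$ would increase $f_{\mathcal H}$ unless $u$ also has few edges in every class. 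Either way, the total degree stays below $\binom{k-1}{r-1}(n/k)^{r-1}-c'\varepsilon^{1/r^2}n^{r-1}$.

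\textbf{Conclusion.} Summing the two bounds gives $d_{\mathcal H}(u)\le\binom{k-1}{r-1}(n/k)^{r-1}-\Omega(\varepsilon^{1/r^2})n^{r-1}$. Since
\[
2\ell=2\varepsilon^{3/(2r^2)}n^{r-1}=o\bigl(\varepsilon^{1/r^2}n^{r-1}\bigr),
\]
the claimed inequality follows for $\varepsilon$ small.
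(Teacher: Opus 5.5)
\textbf{Verdict: there is a genuine gap.} Your count of the crossing edges is fine; it is the paper's Case~2 estimate. In the complementary case, where $u$ has at most $h-1$ dominant partners in $V_1\setminus L$, your bad-edge bullets do give $O(\ell)$, so your argument closes there.

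The gap is exactly the situation you flag as the main obstacle: $u$ forms dominant pairs with at least $h$ vertices of $V_1\setminus L$. Lemma~\ref{independentset} does not help, since it only controls dominant pairs inside $V_i\setminus(W\cup L)$, and $u\in L$. So a priori the edges through $u$ meeting $V_1\setminus\{u\}$ can number $\Theta(n^{r-1})$, which swamps the $\Omega(\varepsilon^{1/r^2}n^{r-1})$ deficit you get from sparse pairs.

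The partition swap cannot rescue this on its own, and as you state it, it runs backwards. Optimality of $\bm{\sigma}$ gives $e_{V_1}(u)\le 2e_{V_j}(u)$ for every $j$. So many edges into $V_1$ force \emph{many}, not few, edges into every other class. A vertex with complete link satisfies all these inequalities, so the swap alone gives no degree bound. Some use of $tK_{k+1}^{(r)}$-freeness at $u$ itself is indispensable, and your ``Either way'' sentence supplies none.

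\textbf{The missing step (the paper's Case~1).} If $u$ has at least $h$ dominant partners in $V_1\setminus L$, then $|DV_j(u)|\le k\varepsilon^{1/r^2}n$ for some $j\ge 2$. Suppose not.
\begin{itemize}
\item Take a non-edge $f\ni u$; it exists since $u\in L$. Maximality gives a copy of $tK_{k+1}^{(r)}$ in $\mathcal H+f$ through $f$, hence a copy $\mathcal F_1\subseteq\mathcal H$ of $(t-1)K_{k+1}^{(r)}$ avoiding $u$.
\item Since $|V(\mathcal F_1)|<h$, you can choose a dominant partner $v\in V_1\setminus(L\cup V(\mathcal F_1))$ of $u$.
\item Via Lemma~\ref{capset}, choose successively $w_j\in V_j\setminus L$ forming dense pairs with $u,v,w_2,\dots,w_{j-1}$. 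The threshold $k\varepsilon^{1/r^2}n$ leaves room for the $k-1$ intersections.
\item The shadow then contains a $K_{k+1}$ on $\{u,v,w_2,\dots,w_k\}$. The codegree condition lets you expand it greedily into a $K_{k+1}^{(r)}$ disjoint from $\mathcal F_1$, a contradiction.
\end{itemize}

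\textbf{How the bound then closes.} Say $|DV_2(u)|$ is small. Every $w\in V_2\setminus DV_2(u)$ is a sparse partner of $u$, so $e_{V_2}(u)\le |DV_2(u)|\binom{n}{r-2}+|V_2|d=O(\varepsilon^{1/r^2})n^{r-1}$. Now the swap correctly gives $e_{V_1}(u)\le 2e_{V_2}(u)$. The edges avoiding $(V_1\cup V_2)\setminus\{u\}$ number at most $\binom{k-2}{r-1}(n/k)^{r-1}+O(\ell)$. So the deficit is of order $\binom{k-2}{r-2}(n/k)^{r-1}$, coming from a whole class rather than from the sparse pairs.

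\textbf{On $u\in W$.} Your worry here is unfounded. The proof of Lemma~\ref{claimA} uses $u\notin W$ only to produce a non-edge through $u$, and such a non-edge exists for every $u\in L$. That extension is exactly what bounds the non-crossing edges in the last estimate.
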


\begin{proof}
Without loss of generality, we assume that $u\in V_1$.
We distinguish two cases depending on how many vertices in $V_1\setminus L$ form dominant pairs with~$u$.

\medskip
\noindent\textbf{Case 1.}
There exist at least $h$ vertices in $V_1\setminus L$ whose codegree
with $u$ is at least $d$.

\smallskip
 We first claim that there exists some $j\in\{2,\ldots,k\}$ such that
$|DV_j(u)|\le k\varepsilon^{1/r^{2}} n$.
Otherwise, suppose
$|DV_j(u)|>k\varepsilon^{1/r^{2}} n$ for all $j\in [2, k]$.
Since $u\in L$, there exists an $r$-set $f\subseteq V(\mathcal H)$ containing $u$ with $f\notin E(\mathcal{H})$.
By the maximality of $\mathcal{H}$, the $r$-graph $\mathcal{H}'=\mathcal{H}+f$ contains a copy of $tK_{k+1}^{(r)}$, denoted  $\mathcal F_0$, with $f\in E(\mathcal F_0)$.
Moreover, $\mathcal F_0$ contains a copy of $(t-1)K_{k+1}^{(r)}$, denoted $\mathcal F_1$, such that $u\notin V(\mathcal F_1)$.
By  the assumption of $u$, there exists a vertex $v\in V_1\setminus (W\cup L\cup V(\mathcal F_1)$  such that $\{u, v\}$ is a dominant pair in $\mathcal H$.
By Lemma \ref{capset},
\begin{eqnarray*}
\left|DV_{2}(u)\cap DV_{2}(v)  \setminus L\right|
\geq k\varepsilon^{1/r^2}n + (|V_{2}|-\varepsilon^{1/r^2}n)-|V_{2}|-\varepsilon^{\frac{r-1}{r^2}}n
>h.
\end{eqnarray*}
Thus we can find a vertex $w_2\in V_2\setminus L$ such that $\{u, w_2\}$ and $\{v, w_2\}$ are dense pairs. Continuing this process, we can find a vertex $w_i\in V_i\setminus L$ for each $i\in [2, k]$ such that every pair in $\{u, v, w_2, \ldots, w_k\}$, except for $\{u, v\}$,  is a dense pair.
Thus $\partial (\mathcal H[u, v,w_{2},\ldots,w_{k}])\cong K_{k+1}$, and we may  extend this subgraph to a copy of $K^{(r)}_{k+1}$,  denoted $\mathcal F_2$, with $V(\mathcal F_2)\cap V(\mathcal F_1)=\emptyset$.
Hence $\mathcal F_1\cup \mathcal F_2\cong tK_{k+1}^{(r)}\subseteq\mathcal H$, a contradiction.
Thus, without loss of generality, we may assume
\[
    |DV_2(u)|\le k\varepsilon^{1/r^{2}} n.
\]
To estimate $d_{\mathcal H}(u)$, note that $E_u$ is a subset of the union of the following three sets:

(i) $E_{V_2}(u)$ (the set of edges containing $u$ and intersecting $V_2\setminus\{u\}$);

(ii) $E_{V_1}(u)$ (the set of edges containing $u$ and intersecting $V_1\setminus\{u\}$;

(iii) The set of edges containing $u$ and disjoint from $V_1\cup V_2\setminus\{u\}$.

\noindent
Using Lemmas~\ref{WL}, \ref{independentset}, and~\ref{claimA}, we obtain
\begin{eqnarray}\label{eq4.5}
e_{V_2}(u)&\leq &\big(|L|+t-1\big)\binom{n-2}{r-2}+\left(k f(t{-}1,h{-}1)\binom{n-3}{r-3}+O(n^{r-2})\right)\nonumber \\
& & + |DV_2(u)| \binom{k-1}{r-2}
        \left(\frac{1}{k}+\varepsilon^{1/r}\right)^{r-2}n^{r-2} + \big(|V_2|-|DV_2(u)|\big)d,
\end{eqnarray}
where the first term gives an upper bound on the number of edges in $E_{W\cup L}(u)$; the second term provides the number of edges in $E_u$ that contain a pair vertices in some $V_i\setminus (W\cup L\cup\{u\})$; the third term offers an upper bound on the number of edges in $E_u$ that intersects a vertex in $DV_2(u)$ with $|(e\setminus\{u\})\cap V_i|\le 1$ for all $i\in [k]$; the last term is an upper bound on the number of edges in $E_u$ that intersects a vertex in $V_2\setminus DV_2(u)$ with $|(e\setminus\{u\})\cap V_i|\le 1$ for all $i\in [k]$.

Since $|DV_2(u)\leq 2k\varepsilon^{1/r^{2}} n$, it follows from (\ref{eq4.5}) that
\begin{eqnarray*}
e_{V_2}(u)
    < 2k\varepsilon^{1/r^{2}} n
      \binom{k-1}{r-2}\left(\frac{n}{k}\right)^{r-2}
      + 3\ell.
\end{eqnarray*}

For (ii), we show that $e_{V_1}(u)\leq  2 e_{V_2}(u)$.
Consider the alternative partition
\[
    \bm{\sigma}'=(V_1\setminus\{u\},\, V_2\cup\{u\},\, V_3,\ldots,V_k)
\]
and compare $f_{\mathcal H}(\bm{\sigma}')$ with $f_{\mathcal H}(\bm{\sigma})$.
Edges counted in $e_{V_2}(u)$ may reduce their contribution by~$1$
under $\bm{\sigma}'$, giving a total decrease of at most $e_{V_2}(u)$.
Conversely, edges counted in $e_{V_1}(u)$  disjoint from $V_2$
may increase their contribution by~$1$, giving a total increase of
$e_{V_1}(u)-e_{V_2}(u)$.
Thus,
\[
    f_{\mathcal H}(\bm{\sigma}') - f_{\mathcal H}(\bm{\sigma})
    \ge e_{V_1}(u)-2e_{V_2}(u).
\]
Since $\bm{\sigma}$ is optimal,
\[
    e_{V_1}(u)\le 2e_{V_2}(u).
\]

For (iii), it is easy  to check that the number of edges disjoint from $V_1\cup V_2\setminus \{u\}$ is at most
$$\binom{k-2}{r-1}(\frac{1}{k}+\varepsilon^{1/r})^{r-1}n^{r-1}+3\ell\leq \binom{k-2}{r-1}(\frac{n}{k})^{r-1}+4\ell.$$
So we deduce that
\
\begin{align*}
d_{\mathcal H}(u)
 &\leq 3e_{V_2}(u)+\binom{k-2}{r-1}\left(\frac{n}{k}\right)^{r-1}
        + 4\ell \\
 &< 6k\varepsilon^{1/r^{2}}n
        \binom{k-1}{r-2}\left(\frac{n}{k}\right)^{r-2}
        + \binom{k-2}{r-1}\left(\frac{n}{k}\right)^{r-1}
        + 13\ell \\
 &< \binom{k-1}{r-1}\left(\frac{n}{k}\right)^{r-1}-2\ell,
\end{align*}
where the final inequality follows from
$\binom{k-1}{r-1}-\binom{k-2}{r-1}=\binom{k-2}{r-2}$
and that $\varepsilon$ is sufficiently small.

\medskip
\noindent\textbf{Case 2.}
There are at most $h-1$ vertices in $V_1\setminus L$ that form dominant pairs with $u$.

\smallskip
In this case, the number of edges $e\in E_u$ with  $|V_1\cap e|\geq 2$ is at most
\[
    (h-1)\binom{n-2}{r-2}+(|V_1|-h+1)d+ |L|\binom{n-2}{r-2}< \ell.
\]
By Lemmas~\ref{WL}, \ref{independentset}, and~\ref{claimA}, edges $e\in E_u$ with $|e\cap V_i|\ge 2$ for some $i\in[2,k]$ contribute at most $3\ell$.

Since $u\in L$, $u$ is incident to at least $\varepsilon^{1/r^{2}}n$ sparse pairs.
Each such sparse pair reduces the potential contribution of the complete
$k$-partite $r$-graph by roughly
$\binom{k-2}{r-2}(n/k)^{r-2}-d$.
So the number of edges $e$ of $E_u$ with $|V_i\cap e|\leq 1$ for all $ i\in [k]$ is at most
\begin{eqnarray*}
& &\binom{k-1}{r-1}\left(\frac{n}{k}\right)^{r-1}
 - \frac{\varepsilon^{1/r^{2}}n}{r-1}
   \left[
      \binom{k-2}{r-2}\left(\frac{n}{k}\right)^{r-2}-d
   \right]\\
   &<&\binom{k-1}{r-1}\left(\frac{n}{k}\right)^{r-1}
 - \Omega(\varepsilon^{1/r^{2}})\,n^{r-1}.
 \end{eqnarray*}
Then
\begin{align*}
d_{\mathcal H}(u)
 &<
 \binom{k-1}{r-1}\left(\frac{n}{k}\right)^{r-1}
 - \Omega(\varepsilon^{1/r^{2}})\,n^{r-1}
 + 4\ell \\
 &<
 \binom{k-1}{r-1}\left(\frac{n}{k}\right)^{r-1}
 - 2\ell,
\end{align*}
where the last inequality holds since $\varepsilon^{1/r^{2}}\gg \varepsilon^{3/(2r^{2})}
= \ell/n^{r-1}$.

Combining these two cases completes the proof. \end{proof}

\begin{lemma}\label{lem:x>0-L}
Let $\mathbf{x}$ be the nonnegative eigenvector of $\mathcal H$ corresponding to $\lambda^{(p)}(\mathcal H)$. Then $x_u>0$ for every  non-isolated vertex $u\in V(\mathcal H)$.
\end{lemma}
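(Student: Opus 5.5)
The claim follows from Theorem~\ref{PFThm} once we split on $p$. The standing assumption is $p\ge r$. If $p>r$, part~(2) of Theorem~\ref{PFThm} gives $x_u>0$ for every non-isolated vertex directly. The only remaining case is $p=r$. Here part~(1) applies only when $p>r-1$ (true) \emph{and} $\mathcal H$ is connected, so the work is to show that $\mathcal H$ is connected, or at least that its non-isolated part is a single component.

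For $p=r$ I would argue via the extremality of $\mathcal H$. By \eqref{eq4.1} and the stability discussion, $\mathcal H$ is $\varepsilon n^r$-close to $T_r(n,k)$. Let $C$ be a component of $\mathcal H$ with $\lambda^{(p)}(\mathcal H[C])=\lambda^{(p)}(\mathcal H)$; since $P_{\mathcal H}$ splits over components, the maximum is attained on one component after rescaling. Because $\lambda^{(p)}(\mathcal H)\ge \lambda^{(p)}(T_r(n,k))=\Theta(n^{r(1-1/p)})$, the component $C$ must carry $\Omega(n^r)$ edges, so $|C|=\Omega(n)$. Suppose $\mathcal H$ has a second component $C'$ containing an edge, or in fact any vertex outside $C$. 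Then I would modify $\mathcal H$ to raise the spectral radius while keeping it $tK_{k+1}^{(r)}$-free, contradicting maximality.

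The modification is as follows. Delete all edges outside $C$; these contribute nothing to the optimal $\mathbf x$ supported on $C$, and deleting edges preserves $tK_{k+1}^{(r)}$-freeness. Then add one edge $f$ meeting $C$ in exactly one vertex $v$ with $x_v>0$, the other $r-1$ vertices lying outside $C$. The optimal vector restricted to $C$ is an eigenvector of the connected $r$-graph $\mathcal H[C]$, so by Theorem~\ref{PFThm}(1) it is positive on $C$. The new hypergraph is still $tK_{k+1}^{(r)}$-free, because each $K_{k+1}^{(r)}$ is $2$-connected in the sense that every edge shares at least two vertices with the rest of the copy, so $f$, being a pendant edge, lies in no copy. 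Moreover, a standard perturbation argument, putting a small mass $\eta$ on the new vertices and renormalizing, shows that the spectral radius strictly increases. The gain is of order $\eta^{r-1}x_v$ against a loss of order $\eta^{p}=\eta^{r}$ from the norm, so it is positive for small $\eta$. This contradicts maximality. Hence every vertex outside $C$ is isolated, and Theorem~\ref{PFThm}(1), applied to the connected $\mathcal H[C]$, gives $x_u>0$ for all non-isolated $u$.

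The main obstacle is the $p=r$ case, specifically checking that adding the pendant edge cannot create a copy of $tK_{k+1}^{(r)}$ and that the perturbation gain really dominates. The cleaner route I would try first is a simpler local argument. If $u$ is non-isolated with $x_u=0$, pick an edge $e\ni u$ and move a small mass $\eta$ onto $u$. The first-order gain is $\eta\cdot(r-1)!\sum_{e\ni u}\mathbf x^{e\setminus\{u\}}$, while the norm cost is $O(\eta^{p})$, so the perturbation contradicts optimality provided some edge through $u$ has all other entries positive. To guarantee that, I would show that all vertices in the large dense part have positive entries. That part is connected because every non-isolated vertex meets the near-Tur\'an structure, and such a vertex can be joined to it via an extra edge while staying $tK_{k+1}^{(r)}$-free.
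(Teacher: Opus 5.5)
Your proposal is correct and is essentially the paper's argument: $p>r$ is handled by Theorem~\ref{PFThm}(2), and for $p=r$ the paper also keeps a component of maximum $\lambda^{(r)}$, discards the rest, attaches a pendant edge at one of its vertices, and contradicts maximality. Your write-up is more careful than the paper's on three points: why the pendant edge lies in no $K_{k+1}^{(r)}$, why the increase is strict ($\eta^{r-1}$ versus $\eta^{r}$), and that only the non-isolated part needs to be connected, since a pendant edge needs $r-1$ spare vertices. The ``local'' alternative in your last paragraph is unnecessary, and its key step is not established.
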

\begin{proof}
 By Theorem \ref{PFThm}, it suffices to prove that $\mathcal H$ is connected for $p=r$. Suppose for a contradiction that $\mathcal H$ is disconnected, and
 let $\mathcal H_1,\mathcal H_2,\ldots,\mathcal H_s$ be the connected components of $\mathcal H$. Assume $\max\{\lambda^{(r)}(\mathcal H_i) : i\in [s]\}=\lambda^{(r)}(\mathcal H_1)$.
  Then  $\lambda^{(r)}(\mathcal H)=\lambda^{(r)}(\mathcal H_1)$.
 Take an arbitrary vertex $v\in V(\mathcal H_1)$,  let $\mathcal H'$ denote the $r$-graph obtained from $\mathcal H_1$ by attaching a pendent edge $f$ at $v$ and adding
 $n-|V(\mathcal H_1)|-r+1$ isolated vertices.
 Since $\mathcal H_1$ is a subgraph of $\mathcal H'$,
 we have $\lambda^{(r)}(\mathcal H')>\lambda^{(r)}(\mathcal H_1)=\lambda^{(r)}(\mathcal H)$.
Clearly, $\mathcal H'$ is $tK_{k+1}^{(r)}$-free, which contradicts the maximality of $\lambda^{(r)}(\mathcal H)$. Thus $\mathcal H$ must be  connected, as required.
\end{proof}

\begin{lemma}\label{lem:degree-of-u0}
	Let $z$ be a vertex with $x_z = \max\{x_w : w\in V(\mathcal H)\}$ and let $u_0$ be a vertex such that $x_{u_0}=\max\{x_w : w\in V(\mathcal H)\setminus (W\setminus L)\}$.
	Then $x_{u_0}\geq c_0^{\frac{1}{r-1}}x_z$,  where $c_0=\frac{(k)_r}{2k^{r}}$,  and
	\[
	d_{\mathcal H}(u_0) > \binom{k-1}{r-1}\left(\frac{n}{k}\right)^{r-1} - 2\ell.
	\]
\end{lemma}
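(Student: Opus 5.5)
Two things must be shown: a lower bound $x_{u_0}\ge c_0^{1/(r-1)}x_z$, and then the degree lower bound for $u_0$. Both will come from the eigen-equation \eqref{eq:eigenequation} combined with the lower bound \eqref{eq4.1} on $\lambda:=\lambda^{(p)}(\mathcal H)$. Write $W':=W\setminus L$; Lemma~\ref{WL} gives $|W'|\le t-1$. By Lemma~\ref{lem:x>0-L}, $x_w>0$ at every non-isolated vertex, so \eqref{eq:eigenequation} holds there.

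\emph{First claim.} If $z\notin W'$, then $u_0=z$ and there is nothing to prove, so assume $z\in W'$. Apply \eqref{eq:eigenequation} at $z$:
\[
\lambda x_z^{p-1}=(r-1)!\sum_{e\ni z}\mathbf x^{e\setminus\{z\}}.
\]
Split the edges through $z$ into two groups. The first group consists of edges meeting $W'\setminus\{z\}$; there are at most $(t-2)\binom{n-2}{r-2}$ of them, and each contributes at most $x_z^{r-1}$. The second group consists of edges with $e\setminus\{z\}\subseteq V\setminus W'$; each contributes at most $x_{u_0}^{r-1}$, and there are at most $\binom{n-1}{r-1}$ of them. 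This gives
\[
\lambda x_z^{p-1}\le (r-1)!\Bigl[O(n^{r-2})x_z^{r-1}+\tbinom{n-1}{r-1}x_{u_0}^{r-1}\Bigr].
\]
Next, we need to convert $x_z^{p-1}$ into a multiple of $x_z^{r-1}$ of the right size. Since $\|\mathbf x\|_p=1$, we have $x_z\le 1$, and since $p\ge r$, $x_z^{p-1}\le x_z^{r-1}$; but this inequality points the wrong way for us. Instead I will use that $x_z^{p}\ge 1/n$, which gives $x_z^{p-r}\ge n^{-(p-r)/p}$. Then
\[
\lambda x_z^{p-1}\ge \lambda n^{-1+r/p}x_z^{r-1}\approx \frac{(r-1)!}{k^{r-1}}\binom{k-1}{r-1}n^{r-1}x_z^{r-1}
\]
by \eqref{eq4.1}. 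Comparing the two bounds yields
\[
x_{u_0}^{r-1}\ge \Bigl(\tfrac{(k-1)_{r-1}}{k^{r-1}}-o(1)\Bigr)x_z^{r-1}\ge \tfrac{(k)_r}{2k^r}\,x_z^{r-1},
\]
since $(k)_r/k^r=(k-1)_{r-1}/k^{r-1}$. This is the factor $c_0$, with the $1/2$ absorbing the error terms.

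\emph{Second claim.} Apply \eqref{eq:eigenequation} at $u_0$, this time bounding every term by $x_z^{r-1}$:
\[
\lambda x_{u_0}^{p-1}\le (r-1)!\,d_{\mathcal H}(u_0)\,x_z^{r-1}.
\]
Then $x_{u_0}^{p-1}$ must be bounded below relative to $x_z^{r-1}$. This is the main obstacle, because the normalization and the exponent $p$ interact. My first attempt will be to sharpen the first step, showing that all $x_w$ with $w\notin W'$ are essentially balanced, so that $x_{u_0}^{p}\approx 1/n$ up to a $1+o(1)$ factor, and $x_z\le (1+o(1))x_{u_0}$ as well. If that holds, then
\[
d(u_0)\ge (1-o(1))\lambda\, x_{u_0}^{p-1}/\bigl((r-1)!\,x_z^{r-1}\bigr)
\]
is approximately $\binom{k-1}{r-1}(n/k)^{r-1}$, with an error term smaller than $2\ell$.

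If balancing is too hard, I will argue by contradiction instead. Suppose $d(u_0)\le \binom{k-1}{r-1}(n/k)^{r-1}-2\ell$. Then every vertex outside $W'$ has degree at most that bound, because vertices in $L$ do by Lemma~\ref{lem:degree-vertex-in-L}, and for the others I would redo the same estimate. Summing \eqref{eq:eigenequation} weighted by $x_w$ gives $r\lambda=\sum_w x_w\cdot(\text{link sum at }w)$. Bounding each link sum via degrees, and bounding the contribution of $W'$ via $|W'|\le t-1$, should force $\lambda<\lambda^{(p)}(T_r(n,k))$, which contradicts \eqref{eq4.1}. The delicate point is the $2\ell$ margin, which is $\varepsilon^{3/(2r^2)}n^{r-1}$ and therefore much larger than the $O(n^{r-2})$ errors. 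This separation is what should make the contradiction go through.
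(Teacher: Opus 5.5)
Your first claim is correct and is essentially the paper's argument: you apply the eigen-equation at $z$, split the edges by whether $e\setminus\{z\}$ meets $W\setminus L$, and combine $x_z^p\ge 1/n$ with \eqref{eq4.1}. The gap is in the second claim. If you bound every term at $u_0$ by $x_z^{r-1}$, you only get
\[
d_{\mathcal H}(u_0)\ge \frac{\lambda x_{u_0}^{p-r}}{(r-1)!}\Bigl(\frac{x_{u_0}}{x_z}\Bigr)^{r-1}.
\]
All you know is $x_{u_0}/x_z\ge c_0^{1/(r-1)}$, so this loses a constant factor, which is far larger than the $2\ell$ margin.

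Your proposed repair, $x_z\le(1+o(1))x_{u_0}$, is false in general. For $t\ge 2$ the maximizer turns out to be $K_{t-1}^r\vee T_r(n-t+1,k)$. There $z$ is one of the universal vertices, which lie in $W\setminus L$. Comparing the eigen-equations at $z$ and at a vertex of the Tur\'an part gives $(x_z/x_{u_0})^{p-1}\to k^r/(k)_r>1$.

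Your fallback by contradiction does not work either. Negating the lemma bounds only $d_{\mathcal H}(u_0)$. Having the largest entry on $V\setminus(W\setminus L)$ says nothing that would force the other vertices' degrees below $\binom{k-1}{r-1}(n/k)^{r-1}-2\ell$. So the premise of your averaging argument, that all vertices outside $W\setminus L$ have small degree, has no justification.

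The missing step is to split the edges at $u_0$ exactly as you did at $z$. Edges meeting $W\setminus L$ number at most $(t-1)\binom{n-2}{r-2}$. Each contributes at most $x_z^{r-1}\le c_0^{-1}x_{u_0}^{r-1}$ by your first claim. Every other edge contributes at most $x_{u_0}^{r-1}$, since $u_0$ maximizes over $V\setminus(W\setminus L)$. This gives $d_{\mathcal H}(u_0)\ge \lambda x_{u_0}^{p-r}/(r-1)!-O(n^{r-2})$, with the exponent $p-r\ge 0$ now sitting on $x_{u_0}$ alone.

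No balancing of entries is needed. From $1=\|\mathbf x\|_p^p\le (t-1)x_z^p+(n-t+1)x_{u_0}^p$ and $x_z\le c_0^{-1/(r-1)}x_{u_0}$, you get $x_{u_0}^p\ge 1/(n+c_1)$ for a constant $c_1$. Hence $x_{u_0}^{p-r}\ge (1-O(1/n))\,n^{-(p-r)/p}$. Then \eqref{eq4.1} yields $d_{\mathcal H}(u_0)\ge (1-O(1/n))\binom{k-1}{r-1}(n/k)^{r-1}-O(n^{r-2})$, which exceeds the required bound because $2\ell$ is much larger than these error terms.
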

\begin{proof}
	We first show that $z\notin L$.
	By the eigenvalue-eigenvector equation for $\lambda^{(p)} (\mathcal H)$ at vertex $z$, we have
	\[
	\lambda^{(p)} (\mathcal H)\cdot x_z^{p-1} = (r-1)! \sum_{z\in e, e\in E(\mathcal H)} \mathbf{x}^{e\setminus \{z\}}
	\leq (r-1)! \cdot d_{\mathcal H}(z) x_z^{r-1}.
	\]
	Combining this with (\ref{eq4.1}), it follows that
	\begin{equation}\label{eq:d(z)-lower-bound}
		d_{\mathcal H}(z) \geq \frac{\lambda^{(p)} (\mathcal H)}{(r - 1)!} \cdot x_z^{p-r}
		> \big(1 - O(n^{-1})\big) \cdot \frac{\binom{k-1}{r-1} n^{r(1-1/p)}}{k^{r-1}} \cdot x_z^{p-r}.
	\end{equation}
Since $\|\bm{x}\|_p=1$, we have  $x_z \geq n^{-1/p}$. Combining this with
	 \eqref{eq:d(z)-lower-bound}, we obtain
	\begin{align*}
		d_{\mathcal H}(z) & > \big(1 - O(n^{-1})\big) \cdot \frac{\binom{k-1}{r-1} n^{r-1}}{k^{r-1}} >
		\binom{k-1}{r-1}\left(\frac{n}{k}\right)^{r-1} - 2\ell.
	\end{align*}
 It then follows from  Lemma \ref{lem:degree-vertex-in-L} that $z\notin L$.

Next, we prove that $x_{u_0}\geq c_0^{\frac{1}{r-1}}x_z$.
	From the eigenvalue-eigenvector equation for $\lambda^{(p)} (\mathcal H)$ at vertex $z$, we have
	\begin{align*}
		\lambda^{(p)} (\mathcal H)x_z^{p-1}&=(r-1)!\bigg(\sum_{z\in e,  |(e\setminus\{z\})\cap (W\setminus L)|\geq 1}\mathbf{x}^{e\setminus \{z\}}+\sum_{z\in e, |(e\setminus\{z\})\cap (W\setminus L)|= 0}\mathbf{x}^{e\setminus \{z\}}\bigg)\\[2mm]
		&\leq (r-1)!\bigg(|W\setminus L|\binom{n-2}{r-2}x_z^{r-1}+\binom{n-1}{r-1}x_{u_0}^{r-1}\bigg).
	\end{align*}
	Using asymptotic identity $\binom{n-1}{r-1} = \frac{n^{r-1}}{(r-1)!}\big(1+O(n^{-1})\big)$ and Lemma \ref{WL}, we derive
	\[
	x_{u_0}^{r-1} \geq \frac{\lambda^{(p)}(\mathcal H)}{n^{r-1}}x_z^{p-1} - O(n^{-1})x_z^{r-1}.
	\]
	In conjunction with (\ref{eq4.1}) and the inequality $x_z \geq n^{-1/p}$, we have
	\[
	x_{u_0}^{r-1} \geq (1-O(n^{-1}))\frac{(k)_r}{k^r}x_z^{r-1} - O(n^{-1})x_z^{r-1} \geq c_0 x_z^{r-1},
	\]
	which implies $x_{u_0}\geq c_0^{\frac{1}{r-1}}x_z$.

	We now establish the degree lower bound for $u_0$. From the eigenvalue-eigenvector equation for $\lambda^{(p)} (\mathcal H)$ at $u_0$, we have
	\begin{align}\label{du0}
		\lambda^{(p)} (\mathcal H)x_{u_0}^{p-1}&=(r-1)!\bigg(\sum_{u_0\in e,  |e\cap (W\setminus L)|\geq 1}\mathbf{x}^{e\setminus \{u_0\}}+\sum_{u_0\in e, |e\cap (W\setminus L)|= 0}\mathbf{x}^{e\setminus \{u_0\}}\bigg)\nonumber\\[2mm]
		&\leq (r-1)!\bigg((t-1)\binom{n-2}{r-2}x_z^{r-1}+d_{\mathcal H}(u_0)x_{u_0}^{r-1}\bigg).
	\end{align}
	Using the normalization condition $\|\mathbf{x}\|_p=1 $ and $|W\setminus L|\leq t-1$, we obtain
	\begin{align*}
		1=\sum_{w\in W\setminus L}x_w^{p}+\sum_{w\notin W\setminus L}x_w^{p}
		\leq (t-1)x_z^{p}+(n-t+1)x_{u_0}^{p}.
	\end{align*}
Since $x_{u_0}\geq c_0^{\frac{1}{r-1}}x_z$ and $|W\setminus L|\leq t-1$, there exists a constant $c_1$ such that
	\begin{align}\label{xu0}
	x_{u_0}^{p}\geq \frac{1}{n+c_1}=\frac{1}{n}\bigg(1-\frac{c_1}{n+c_1}\bigg).
	\end{align}
		Combining this with (\ref{eq4.1}),  (\ref{du0}) and rearranging the terms, we obtain
	\begin{align*}
		d_{\mathcal H}(u_0)&\geq \frac{\lambda^{(p)}(\mathcal H)x_{u_0}^{p-r}}{(r-1)!}-O(n^{r-2})\\[2mm]
		&\geq \biggl(1-O\bigg(\frac{1}{n}\bigg)\biggr)\binom{k-1}{r-1}\bigg(\frac{n}{k}\bigg)^{r-1}-O(n^{r-2})\\[2mm]
		&\geq \binom{k-1}{r-1}\bigg(\frac{n}{k}\bigg)^{r-1}-2\ell.
	\end{align*}
	This completes the proof.
\end{proof}

\begin{lemma}\label{lem:L-emptyset}
$L = \emptyset$.
\end{lemma}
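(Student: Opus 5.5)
We argue by contradiction: assuming $L\neq\emptyset$, we modify $\mathcal H$ into a $tK_{k+1}^{(r)}$-free $r$-graph $\mathcal H'$ with $\lambda^{(p)}(\mathcal H')>\lambda^{(p)}(\mathcal H)$, contradicting the maximality of $\mathcal H$. The modification is a standard vertex-switching: take $u\in L$ and the vertex $u_0$ of Lemma~\ref{lem:degree-of-u0}. Since the maximum entry $z$ satisfies $z\notin L$ (shown in the proof of Lemma~\ref{lem:degree-of-u0}), and $u_0$ has degree exceeding the bound of Lemma~\ref{lem:degree-vertex-in-L}, we get $u_0\notin L$. Let $\mathcal H'$ be obtained from $\mathcal H$ by deleting all edges containing $u$ and adding, for each $e\in E(\mathcal H)$ with $u_0\in e$ and $u\notin e$, the edge $(e\setminus\{u_0\})\cup\{u\}$. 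Thus $u$ becomes a ``clone'' of $u_0$.

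\textbf{Step 1: $\mathcal H'$ is $tK_{k+1}^{(r)}$-free.} Suppose $\mathcal H'$ contains $t$ disjoint copies of $K_{k+1}^{(r)}$. Only one copy can use $u$. If $u_0$ is not in the copies, replacing $u$ by $u_0$ gives a copy in $\mathcal H$. If $u_0$ also lies in the copies, we must instead rebuild the copy containing $u$ inside $\mathcal H$, avoiding the other $t-1$ copies. For this we use $u_0\notin L\cup W$ (note $u_0\notin W\setminus L$ by definition), together with Lemma~\ref{set} and the greedy extension of dense pairs into expansions, as in Lemmas~\ref{extention} and~\ref{WL}. The other copies occupy only $O(1)$ vertices, so we can re-embed the core of the copy containing $u$ using $u_0$'s dense pairs, or we can find a fresh clique-core through vertices of the $T_i$'s. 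I expect this to be the main obstacle: in particular, showing that the structure around $u_0$ in $\mathcal H$ is rich enough to supply a replacement copy disjoint from the remaining $t-1$ copies. I would first try replacing $u$ by a vertex $u'\in T_1$ (where $u_0\in V_1$) whose pairs with the other core vertices are dense; such a $u'$ exists by Lemma~\ref{set} once the core vertices outside $V_1$ lie outside $L$.

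\textbf{Step 2: spectral comparison.} With $\mathbf x$ the Perron vector of $\mathcal H$, we have
\[
P_{\mathcal H'}(\mathbf x)-P_{\mathcal H}(\mathbf x)\ge r!\Big(x_u\sum_{u_0\in e,\,u\notin e}\mathbf x^{e\setminus\{u_0\}}-\sum_{u\in e}\mathbf x^{e}\Big).
\]
By Lemma~\ref{lem:x>0-L}, $x_u>0$, so it suffices to compare $\sum_{u_0\in e,u\notin e}\mathbf x^{e\setminus\{u_0\}}$ with $\sum_{u\in e}\mathbf x^{e\setminus\{u\}}$. The eigen-equation at $u_0$ gives $\sum_{u_0\in e}\mathbf x^{e\setminus\{u_0\}}=\lambda x_{u_0}^{p-1}/(r-1)!$, and the terms containing $u$ contribute only $O(n^{r-2})x_z^{r-1}$. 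For $u$ we bound $\sum_{u\in e}\mathbf x^{e\setminus\{u\}}\le d_{\mathcal H}(u)\,x_z^{r-1}$.

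\textbf{Step 3: the entries are nearly equal.} We need all entries outside $W\setminus L$ to be $(1+o(1))n^{-1/p}$ in a sense strong enough that the degree gap from Lemmas~\ref{lem:degree-vertex-in-L} and~\ref{lem:degree-of-u0}, namely
\[
d_{\mathcal H}(u_0)-d_{\mathcal H}(u)>\binom{k-1}{r-1}\Big(\frac nk\Big)^{r-1}-2\ell-d_{\mathcal H}(u),
\]
translates into a weighted gap. To get this, I would refine the argument: edges of $u_0$ through $W\setminus L$ contribute $O(n^{r-2})$. The cleanest route is to bound $d_{\mathcal H}(u)$ more sharply. Case~2 of Lemma~\ref{lem:degree-vertex-in-L} already gives a deficit of $\Omega(\varepsilon^{1/r^2})n^{r-1}\gg\ell$, and I would use the lower bound $\sum_{u_0\in e}\mathbf x^{e\setminus\{u_0\}}\ge \lambda x_{u_0}^{p-1}/(r-1)!$ together with $x_{u_0}^p\ge 1/(n+c_1)$ from (\ref{xu0}). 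If instead $x_u\le x_{u_0}$ fails, then the eigen-equation at $u$ forces $d_{\mathcal H}(u)$ to be large, contradicting Lemma~\ref{lem:degree-vertex-in-L}. Combining these yields $P_{\mathcal H'}(\mathbf x)>P_{\mathcal H}(\mathbf x)$, hence $\lambda^{(p)}(\mathcal H')>\lambda^{(p)}(\mathcal H)$, a contradiction; therefore $L=\emptyset$.
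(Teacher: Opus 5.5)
There is a genuine gap in Step~1, the step you yourself flag.

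\textbf{Step 1 (freeness of $\mathcal H'$).} Because you clone \emph{every} edge at $u_0$, the vertex $u$ inherits in $\mathcal H'$ all neighbours of $u_0$. These include vertices of $V_1$ reached through bad edges at $u_0$, vertices of $L$, and vertices of $W$. Lemmas~\ref{degree} and~\ref{claimA} and the bound $e_L(u_0)<\ell$ only limit the number of such edges; they do not exclude them.

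Now take the critical case $u_0\in V(\mathcal F_0)$, with $u$ a core vertex of its copy and one of its core neighbours in $V_1$ or in $L$. Your replacement by some $u'\in T_1$ cannot be guaranteed there. A vertex of $T_1$ has no dominant pair inside $V_1\setminus(W\cup L)$, and Lemma~\ref{set} says nothing about partners in $L$. Your fallback of finding ``a fresh clique-core'' in $\mathcal H$ disjoint from the other $t-1$ copies is exactly what would have to be proved.

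The missing idea is to clone only the good part $E_{u_0}^c$ of the link of $u_0$: the edges at $u_0$ that are not bad and avoid $W\cup L$. Then:
\begin{itemize}
\item every new edge at $u$ meets each class at most once;
\item all new neighbours of $u$ lie in $\bigcup_{j\ge 2}(V_j\setminus(W\cup L))$, so the shadow of the new edges is $k$-partite;
\item hence $u$ cannot lie in the core of a copy of $K_{k+1}^{(r)}$, since two of its $k$ core neighbours would share a class and be covered by a bad edge of $\mathcal H$, which Lemma~\ref{extention} extends away from the other copies;
\item the case where $u$ is a non-core vertex is handled by trading its edge for the corresponding edge of $E_{u_0}^c$.
\end{itemize}
The price is that the discarded edges $BE_{u_0}$ must contribute at most $\frac32\ell\, x_{u_0}^{r-1}$. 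This fits inside the $2\ell$ degree gap of Lemmas~\ref{lem:degree-vertex-in-L} and~\ref{lem:degree-of-u0}.

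\textbf{Steps 2--3 (spectral comparison).} Your bound $\sum_{u\in e}\mathbf x^{e\setminus\{u\}}\le d_{\mathcal H}(u)\,x_z^{r-1}$ is too weak. Only $x_{u_0}\ge c_0^{1/(r-1)}x_z$ with $c_0<1$ is known, so this loses a constant factor and swamps a gap of order $\ell$. Step~3 does not repair this. Asking for all entries outside $W\setminus L$ to be $(1+o(1))n^{-1/p}$ is unnecessary and not available at this stage.

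The correct observation is that $x_{u_0}$ is the maximum over $V(\mathcal H)\setminus(W\setminus L)$, a set containing $L$ and all but at most $t-1$ vertices. So every edge at $u$ avoiding $W\setminus L$ contributes at most $x_{u_0}^{r-1}$. Only the at most $(t-1)\binom{n-2}{r-2}$ edges meeting $W\setminus L$ need the factor $x_z^{r-1}\le c_0^{-1}x_{u_0}^{r-1}$, and these are absorbed into $\ell$. In particular, $x_u\le x_{u_0}$ is automatic for $u\in L$.

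With this fix your full-clone spectral comparison would go through, with even less loss than the paper's. The paper gives up the edges $BE_{u_0}$ precisely so that the freeness argument works.
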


\begin{proof}
Suppose for a contradiction that $L\neq\emptyset$. By Lemma~\ref{lem:x>0-L}, we may
choose a vertex $u\in L$ with $x_u > 0$.  Recall that $x_{u_0}=\max\{x_w : w\in V(\mathcal H)\setminus (W\setminus L)\}$;
we assume without loss of generality that $u_0\in V_1$. Furthermore,  Lemmas \ref{lem:degree-of-u0} and
\ref{lem:degree-vertex-in-L} imply that $u_0\notin L$. Let $BE_{u_0}\subseteq E_{u_0}$ denote the subset of edges that are either bad edges or intersect $W\cup L$.
 Note that $BE_{u_0}$ is  contained in the union of the following four sets:

 (i) $BE_{u_0}^1$: set of  edges in $E_{u_0}$ containing two vertices  in
    $V_j\setminus  (W\cup L)$ for some $j\in [k]$ and are disjoint form $W\cup L$;

 (ii) $BE_{u_0}^{2}$: set of edges in $E_{u_0}$ intersecting $V_1\setminus (L\cup \{u_0\}) $ and are disjoint form $W\cup L$;

  (iii) $BE_{u_0}^{3}$: set of  edges in $E_{u_0}$ that intersect $W\setminus L$;

 (iv) $BE_{u_0}^{4}$: set of   edges in $E_{u_0}$  that intersect $L$  and are disjoint from  $W\setminus L$.

\noindent
By Lemmas~\ref{independentset} and~\ref{claimA}, we have
    \begin{eqnarray*}
        |BE_{u_0}^{1}| &\leq& (k-1)f(t-1,h-1)\binom{n-3}{r-3}
        + O(n^{r-2})= O(n^{r-2}),
         \end{eqnarray*}
  and
  \begin{eqnarray*}
      |BE_{u_0}^{2}| &\leq& (|V_1|-h)d
            + (h-1)\binom{n-2}{r-2}= O(n^{r-2}).
    \end{eqnarray*}
Clearly,
 $|BE_{u_0}^{3}|\leq (t-1)\binom{n-2}{r-2}= O(n^{r-2}),$
 and
$|BE_{u_0}^{4}|\leq \ell.$
Recall that $x_{z}=\max\{x_w : w\in V(\mathcal H)\}$ and $x_{u_0}^{r-1}\geq c_0x_z^{r-1}$. We have
\begin{align*}
\sum_{e\in BE_{u_0}} \mathbf{x}^{e\setminus \{u_0\}}
&\leq |BE_{u_0}^{1}|x_{u_0}^{r-1}
    +|BE_{u_0}^{2}|x_{u_0}^{r-1}
    +  |BE_{u_0}^{4}|x_{u_0}^{r-1}
    +|BE_{u_0}^{3}|x_z^{r-1}\\
&< \ell x_{u_0}^{r-1}
   + O(n^{r-2})x_{z}^{r-1}
 < \frac{3}{2}\ell x_{u_0}^{r-1}
\end{align*}
for  sufficiently large $n$. Let $E_{u_0}^c=E_{u_0}\setminus BE_{u_0}$.
Using the eigenvalue-eigenvector equation for $\lambda^{(p)} (\mathcal H)$ at vertex $u_0$, we obtain
\[
\lambda^{(p)} (\mathcal H)\cdot x_{u_0}^{p-1}
= (r-1)! \sum_{e\in E_{u_0}^c} \mathbf{x}^{e\setminus \{u_0\}}
  + (r-1)! \sum_{e\in BE_{u_0}} \mathbf{x}^{e\setminus \{u_0\}}.
\]
Therefore,
\begin{align*}
\sum_{e\in E_{u_0}^c} \mathbf{x}^{e\setminus \{u_0\}}
 &\geq \frac{\lambda^{(p)} (\mathcal H)}{(r-1)!} \cdot x_{u_0}^{p-1}
       - \sum_{e\in BE_{u_0}} \mathbf{x}^{e\setminus \{u_0\}} \\
 &> \bigg ( \frac{\lambda^{(p)} (\mathcal H)}{(r-1)!} \cdot x_{u_0}^{p-r}
            - \frac{3}{2} \ell\bigg) x_{u_0}^{r-1}.
\end{align*}
Combining this with (\ref{eq4.1}) and  (\ref{xu0}), we have
\begin{equation}\label{eq:inequality-last-step}
\sum_{e\in E_{u_0}^c} \mathbf{x}^{e\setminus \{u_0\}}
\geq  \bigg[\binom{k-1}{r-1} \left(\frac{n}{k}\right)^{r-1}
      - 2\ell+\frac{1}{4} \ell\bigg] x_{u_0}^{r-1}.
\end{equation}

Finally, we construct an $n$-vertex $r$-graph $\mathcal H'$  that is
$tK_{k+1}^{(r)}$-free and  has a larger
$p$-spectral radius than $\mathcal H$, thereby yielding a contradiction. To this end, we define
\[
 E_{0}=\{(e\setminus\{{u_0}\})\cup \{u\}: e\in E_{u_0}^c\}.
\]
Since $u\in L$, every $f\in E_{0}$ is an $r$-set of $V(\mathcal H)$ by the definition
of $E_{u_0}^c$.
We now define the  $r$-graph $\mathcal H'$ with  vertex set  $V(\mathcal H') = V(\mathcal H)$ and edge set
$E(\mathcal H') =  E(\mathcal H\setminus \{u\}) \cup E_{0}.$

We first verify that $\mathcal H'$ is $tK_{k+1}^{(r)}$-free. Suppose for a contradiction
that $\mathcal H'$ contains a copy of $tK_{k+1}^{(r)}$, denoted  $\mathcal F_0$.
Then $u\in V(\mathcal F_0)$.
Moreover, $\mathcal F_0$ contains a copy of $K_{k+1}^{(r)}$, say $\mathcal F_1$,
such that $u\in V(\mathcal F_1)$.
Let $\mathcal F_2=\mathcal F_0\setminus \mathcal F_1$. Then
$\mathcal F_2\cong (t-1)K_{k+1}^{(r)}$ and $u\notin V(\mathcal F_2)$.
Let $F$ be the core of $\mathcal F_1$.
 Since $\partial (\mathcal H[E_{u_0}^c])$ is a $k$-partite graph, it follows that
$\partial (\mathcal H'[E_{0}])$ is also $k$-partite, and hence
 $u\not\in F$.
Replacing the edges containing $u$ in $\mathcal F_1$ with their corresponding edges from $E_{u_0}^c$, we obtain a copy of
$K_{k+1}^{(r)}$ in $\mathcal H$ that is disjoint from
$\mathcal F_2$ and does not contain $u$. This implies that $\mathcal H$ contains a copy of $tK_{k+1}^{(r)}$,
a contradiction.

We next show that $\mathcal H'$ has a larger $p$-spectral radius than $\mathcal H$. Note that
\begin{align*}
\lambda^{(p)} (\mathcal H') - \lambda^{(p)} (\mathcal H)
& \geq r! \Bigg( \sum_{e\in E_{0}} \mathbf{x}^{e}- \sum_{e\in E_u} \mathbf{x}^{e} \bigg) \\
& = r! x_u \Bigg( \sum_{e\in E_{u_0}^c} \mathbf{x}^{e\setminus \{u_0\}} - \sum_{e\in E_u} \mathbf{x}^{e\setminus \{u\}}\Bigg).
\end{align*}
By Lemmas \ref{WL}, \ref{lem:degree-vertex-in-L} and \ref{lem:degree-of-u0}, we have
\begin{align*}
\sum_{e\in E_u} \mathbf{x}^{e\setminus \{u\}}
& \leq \bigg(d_{\mathcal H}(u)-(t-1)\binom{n-2}{r-2}\bigg) x_{u_0}^{r-1}+(t-1)\binom{n-2}{r-2}x_z^{r-1}\\[2mm]
& \leq \bigg(\binom{k-1}{r-1} \left(\frac{n}{k}\right)^{r-1}
      - 2\ell+(c_0^{-1}-1)(t-1)\binom{n-2}{r-2}\bigg) x_{u_0}^{r-1}\\[2mm]
& \leq \bigg(\binom{k-1}{r-1} \left(\frac{n}{k}\right)^{r-1}
      - 2\ell+\frac{1}{8} \ell\bigg) x_{u_0}^{r-1}
\end{align*}
Combining this with \eqref{eq:inequality-last-step} , we deduce that
 \[
\lambda^{(p)} (\mathcal H') - \lambda^{(p)} (\mathcal H)
>  \frac{r! \ell}{8} x_u x_z^{r-1} > 0,
\]
which contradicts the maximality of the $p$-spectral radius of $\mathcal H$ among all
$tK_{k+1}^{(r)}$-free $r$-graphs. Hence $L=\emptyset$, completing the proof.
\end{proof}

\begin{lemma}\label{degreeW}
For any vertex $u\in W$, $u$ is adjacent to every other vertex in $V(\mathcal H)$.
\end{lemma}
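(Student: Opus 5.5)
The plan is to show something stronger: every $r$-set containing $u$ is an edge of $\mathcal H$. This gives the claim at once, since any other vertex $v$ lies in some $r$-set together with $u$. The argument combines a maximality/augmentation step with the extension machinery of Lemmas~\ref{set} and~\ref{extention}, using that $L=\emptyset$ by Lemma~\ref{lem:L-emptyset}.

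\textbf{Step 1 (augmenting).} Let $\mathcal H'$ be obtained from $\mathcal H$ by adding every $r$-set containing $u$. I claim $\mathcal H'$ is still $tK_{k+1}^{(r)}$-free. Suppose $\mathcal F_0\subseteq\mathcal H'$ is a copy of $tK_{k+1}^{(r)}$. Since $\mathcal H$ is $tK_{k+1}^{(r)}$-free, $u\in V(\mathcal F_0)$. Let $\mathcal F_1$ be the component copy of $K_{k+1}^{(r)}$ containing $u$, and let $\mathcal F_2=\mathcal F_0\setminus\mathcal F_1\cong(t-1)K_{k+1}^{(r)}$. Then $\mathcal F_2\subseteq \mathcal H$, $u\notin V(\mathcal F_2)$, and $|V(\mathcal F_2)|<h$. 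It suffices to build a copy of $K_{k+1}^{(r)}$ in $\mathcal H$ that contains $u$ and avoids $V(\mathcal F_2)$; together with $\mathcal F_2$ this contradicts $tK_{k+1}^{(r)}$-freeness.

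\textbf{Step 2 (rebuilding a clique through $u$ inside $\mathcal H$).} Say $u\in V_1$.
\begin{itemize}
\item Since $u\in W$, $u$ lies in at least $\theta n$ dominant pairs. As $|W|\le t-1$ (Lemma~\ref{WL} with $L=\emptyset$) and $|V(\mathcal F_2)|<h$, we can pick $w\in V_1\setminus(W\cup V(\mathcal F_2))$ with $\{u,w\}$ dominant.
\item Since $L=\emptyset$, Lemma~\ref{set} applies to $S=\{u,w\}$ and then inductively to growing sets $S$. It yields $w_j\in V_j$ for $j=2,\dots,k$, chosen outside $V(\mathcal F_2)$, such that every pair of $\{u,w,w_2,\dots,w_k\}$ other than $\{u,w\}$ is dense. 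This works because each application supplies at least $h>|V(\mathcal F_2)|$ candidates.
\item As in Lemma~\ref{extention}, the shadow on these $k+1$ vertices is $K_{k+1}$. Every pair involved has codegree at least $d=h\binom{n}{r-3}$. Hence we can greedily choose, for the $\binom{k+1}{2}$ pairs, edges whose remaining $r-2$ vertices are pairwise disjoint and avoid $V(\mathcal F_2)$ and the core. Each step forbids only $O(h)$ vertices, which kills fewer than $d$ extensions.
\end{itemize}
This gives the required copy in $\mathcal H$ and the contradiction.

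\textbf{Step 3 (strict increase).} If some $r$-set $f\ni u$ is not in $\mathcal H$, then $\mathcal H'\supsetneq\mathcal H$. I then want $\lambda^{(p)}(\mathcal H')>\lambda^{(p)}(\mathcal H)$, contradicting maximality. By Lemma~\ref{lem:x>0-L}, whose proof shows $\mathcal H$ is connected, the eigenvector $\mathbf x$ is positive on all vertices. Hence $P_{\mathcal H'}(\mathbf x)\ge P_{\mathcal H}(\mathbf x)+r!\,\mathbf x^{f}>\lambda^{(p)}(\mathcal H)$. So $\mathcal H$ already contains all $r$-sets through $u$, and $u$ is adjacent to every other vertex.

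The main obstacle is Step 2: the dominant partner $w$ and the dense vertices $w_j$ must all be chosen off $V(\mathcal F_2)$, and the greedy extension must stay disjoint from it. This is precisely where $L=\emptyset$ (so Lemma~\ref{set} applies to $u$ itself) and the bounded size of $\mathcal F_2$ relative to $h$ and $d$ are essential.
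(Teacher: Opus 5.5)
Your proof is correct and follows essentially the same route as the paper. You augment $\mathcal H$ by missing $r$-sets through $u$, observe that any resulting copy of $tK_{k+1}^{(r)}$ must use $u$, and rebuild a copy of $K_{k+1}^{(r)}$ through $u$ inside $\mathcal H$ from a dominant pair $\{u,w\}$ via Lemmas~\ref{set} and~\ref{extention}, avoiding $V(\mathcal F_2)$. The differences are minor: you add all missing $r$-sets through $u$ at once rather than a single one, and you spell out the final spectral-increase step that the paper leaves implicit. For that step, positivity of $\mathbf x$ on every vertex is better justified by noting that $L=\emptyset$ rules out isolated vertices (an isolated vertex lies in linearly many sparse pairs), since the connectivity argument in Lemma~\ref{lem:x>0-L} is only given for $p=r$.
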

\begin{proof}
Suppose for a contradiction that there exists a vertex $u\in W$ such that $d(u)<\binom{n-1}{r-1}$. Let $L_{u}=\{e\setminus \{u\}: e\in E(\mathcal H) \text{ and } u\in e\}$. Then there exists at least one $(r-1)$-set $f$ with $f\notin L_{u}$. Let $f'=f\cup \{u\}$ and define $\mathcal H'=\mathcal H+f'$. We claim that $\mathcal H'$ is $tK_{k+1}^{(r)}$-free.
Suppose otherwise that $\mathcal H'$ contains a copy a $tK_{k+1}^{(r)}$, say $\mathcal F$, with $f'\in E(\mathcal F)$. Let $\mathcal F_1\subseteq \mathcal F$ be a copy of  $K_{k+1}^{(r)}$  containing $f'$, and  let $\mathcal F_2$ be the copy of $(t-1)K_{k+1}^{(r)}$ in $\mathcal F$ such that $V(\mathcal F_1)\cap V(\mathcal F_2)=\emptyset$ and $u\notin V(\mathcal F_2)$.
Since $u\in W$, there exists a vertex $v$ such that $\{u,v\}$ is a dominant pair and $v\notin V(\mathcal F_2)$. By Lemmas \ref{set} and \ref{extention}, there exists a copy of $K_{k+1}^{(r)}$, say $\mathcal F_3$, in $\mathcal H$ such that $V(\mathcal F_3)\cap V(\mathcal F_2)=\emptyset$. It follows that $\mathcal F_2\cup \mathcal F_3$ is a copy of $tK_{k+1}^{(r)}$ in  $\mathcal H$, which contradicts the  $tK_{k+1}^{(r)}$-freeness of $\mathcal H$.
\end{proof}

\begin{lemma}\label{lem:down-bound}
For any vertex $v\in V(\mathcal H)\setminus W$, we have
\[
x_v \ge \left(1-\frac{1}{\sqrt{n}}\right)x_{u_0}.
\]
\end{lemma}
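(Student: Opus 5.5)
Our plan is the standard vertex-switching argument, comparing the eigen-equation at $v$ with the one at $u_0$. Since $L=\emptyset$ by Lemma~\ref{lem:L-emptyset}, we have $W\setminus L=W$. So $x_{u_0}=\max\{x_w: w\notin W\}$, and Lemma~\ref{lem:degree-of-u0} gives $x_{u_0}\ge c_0^{1/(r-1)}x_z$. Suppose, for contradiction, that some $v\in V(\mathcal H)\setminus W$ has $x_v<(1-n^{-1/2})x_{u_0}$.

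\textbf{Step 1: the equation at $u_0$.} We write the eigen-equation at $u_0$ as
\[
\lambda^{(p)}(\mathcal H)x_{u_0}^{p-1}=(r-1)!\sum_{e\in E_{u_0}}\mathbf x^{e\setminus\{u_0\}}
\]
and split $E_{u_0}$ into the \emph{good} edges $E_{u_0}^c$ (crossing edges of $\mathcal T$ avoiding $W$) and the bad part $BE_{u_0}$. With $L=\emptyset$, the bounds in the proof of Lemma~\ref{lem:L-emptyset} improve to $|BE_{u_0}|=O(n^{r-2})$. The same holds for the analogous bad set at $v$, using Lemmas~\ref{independentset} and~\ref{claimA} and the fact that $v\notin W$ has fewer than $\theta n$ dominant pairs. (If $\theta n$ dominant pairs is too many to be negligible, this needs care; see Step 3.)

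\textbf{Step 2: the equation at $v$, and comparison.} Since $L=\emptyset$, every crossing pair $\{v,w\}$ is dense. We use this to compare sums. Each good $(r-1)$-set $S$ in the link of $u_0$ lies in $V_{j_2}\cup\cdots\cup V_{j_r}$ with one vertex per class. The key difference between the two links is that $u_0\in V_1$ while $v\in V_i$, so the relevant $(r-1)$-sets avoid different classes.

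To handle this, we first argue that all non-$W$ vertices in the same class have comparable weight. Then we use symmetry of the near-complete $k$-partite structure. Concretely, the plan is to bound
\[
\lambda^{(p)}(\mathcal H)\bigl(x_{u_0}^{p-1}-x_v^{p-1}\bigr)\le (r-1)!\Bigl(\sum_{e\in E_{u_0}}\mathbf x^{e\setminus\{u_0\}}-\sum_{e\in E_v}\mathbf x^{e\setminus\{v\}}\Bigr).
\]
The right-hand side is controlled by the $(r-1)$-sets missing from the link of $v$ but present for $u_0$, each weighted by at most $x_z^{r-1}$. Using maximality of $\mathcal H$ exactly as in Lemma~\ref{degreeW}, adding any missing crossing edge at a non-$W$ vertex would still be $tK_{k+1}^{(r)}$-free unless it creates a copy through dominant pairs. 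From this we derive that $v$ misses only $O(n^{r-2})$ crossing edges. The class-dependence of the two link sums then cancels up to $O(n^{r-2})x_z^{r-1}$. That cancellation comes from writing both sums as $\sigma_{r-1}$ of class weight totals omitting the own class, and using $x_{u_0}\ge$ class averages.

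\textbf{Step 3: conclude.} We obtain
\[
\lambda^{(p)}(\mathcal H)\bigl(x_{u_0}^{p-1}-x_v^{p-1}\bigr)=O(n^{r-2})x_{u_0}^{r-1},
\]
using $x_z=O(x_{u_0})$. The left-hand side is at least $\lambda^{(p)}(\mathcal H)\,(p-1)\,n^{-1/2}x_{u_0}^{p-1}$ by the mean value theorem, since $p\ge r\ge 3$. By (\ref{eq4.1}) and (\ref{xu0}), $\lambda^{(p)}(\mathcal H)x_{u_0}^{p-r}=\Theta(n^{r-1})$. Hence the left-hand side is $\Omega(n^{r-3/2})x_{u_0}^{r-1}$, which exceeds $O(n^{r-2})x_{u_0}^{r-1}$ for large $n$, a contradiction.

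\textbf{Main obstacle.} The hardest step is showing that the link of every $v\notin W$ misses only $O(n^{r-2})$ good crossing edges, rather than $o(n^{r-1})$. We would attempt this via maximality: adding a missing crossing edge $f\ni v$ must create $tK_{k+1}^{(r)}$. Since crossing edges alone yield a $k$-partite shadow, the created copy must use a dominant pair or $W$. By Lemmas~\ref{WL} and~\ref{independentset} such configurations are few, bounding the missing crossing edges by $O(n^{r-2})$. If this fails, a weaker $o(n^{r-1/2})$ bound still suffices for the $n^{-1/2}$ conclusion.
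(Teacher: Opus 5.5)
Your plan has a genuine gap at the ``cancellation'' step of Step 2, and it cannot be patched with the tools your argument uses. At this stage the classes are only known to satisfy $|V_i|=(1/k\pm\varepsilon^{1/r})n$. Suppose $u_0\in V_1$ and $v\in V_2$, and let $X_j$ be the total weight of $V_j\setminus W$. Then the crossing parts of the two links are, up to lower-order terms, $e_{r-1}$ of the class totals with the vertex's own class omitted. Their difference is therefore $(X_2-X_1)\,e_{r-2}(X_3,\dots,X_k)$, and $e_{r-2}(X_3,\dots,X_k)$ is of order $n^{r-2}x_{u_0}^{r-2}$. For this to be $O(n^{r-2})x_{u_0}^{r-1}$ you need $X_2-X_1=O(x_{u_0})$, i.e.\ essentially exact balance of the classes. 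The paper only obtains balance at the very end, via Lemma~\ref{lem:balance-set}. The fact that $x_{u_0}$ dominates class averages bounds each $X_j$ from above, but says nothing about their differences.

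In fact the conclusion is false for graphs having every property your plan invokes. The only consequences of extremality you use are edge-saturation and $\lambda^{(p)}(\mathcal H)x_{u_0}^{p-r}=\Theta(n^{r-1})$. Take $t=1$ and a complete $k$-partite $r$-graph with parts of sizes $n/k\pm m$, where $\sqrt n\ll m\ll\varepsilon n$. It is $K_{k+1}^{(r)}$-saturated and close to $T_r(n,k)$. It has $W=L=\emptyset$, no bad edges and no missing crossing edges. Yet its eigenvector is smaller on the largest part than on the smallest by a factor $1-\Theta(m/n)$, which violates the lemma. So any proof must compare $\mathcal H$ with a modified graph in which $v$ changes class.

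Your other load-bearing claim is also not established: that every $v\notin W$ misses only $O(n^{r-2})$ crossing edges. The sketch does not show how the scarcity of dominant pairs controls the missing edges at $v$. The fallback also has the wrong exponent. Since your left side is $\Omega(n^{r-3/2})x_{u_0}^{r-1}$, you would need $o(n^{r-3/2})$ missing edges, whereas a bound $o(n^{r-1/2})$ is vacuous.

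The paper sidesteps both problems with a switching argument. Assume $x_{v_0}<(1-n^{-1/2})x_{u_0}$. Let $E_A$ be the set of edges $e\ni u_0$ avoiding $v_0$ whose other vertices lie one per class in $T_2\cup\cdots\cup T_k$. Delete all edges at $v_0$ and add $(e\setminus\{u_0\})\cup\{v_0\}$ for every $e\in E_A$. Then
$P_{\mathcal H'}(\mathbf x)-P_{\mathcal H}(\mathbf x)=r!\,x_{v_0}\bigl(\sum_{e\in E_A}\mathbf x^{e\setminus\{u_0\}}-\sum_{e\ni v_0}\mathbf x^{e\setminus\{v_0\}}\bigr)$.
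By the eigen-equation at $v_0$, the second sum is exactly $\lambda^{(p)}(\mathcal H)x_{v_0}^{p-1}/(r-1)!$. By the eigen-equation at $u_0$, the first sum is at least $\lambda^{(p)}(\mathcal H)x_{u_0}^{p-1}/(r-1)!-O(n^{r-2})x_{u_0}^{r-1}$. You never need to analyze the link of $v_0$. Class imbalance is irrelevant because $v_0$ is effectively moved into the class of $u_0$. The real work is then verifying that $\mathcal H'$ is still $tK_{k+1}^{(r)}$-free.
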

\begin{proof}
Suppose for a contradiction that there exists a vertex
$v_0\in V(\mathcal H)\setminus W$ such that $x_{v_0} < \left(1-\frac{1}{\sqrt{n}}\right)x_{u_0}$. Recall that $x_{u_0}=\max\{x_w : w\in V(\mathcal H)\setminus W\}$.
Without loss of generality, we assume that $u_0\in V_1$. Define $E_{\mathcal H\setminus W}(u_0)
= \{e\in E(\mathcal H): u_0\in e,\ e\cap W=\emptyset\}.$
Note that $E_{\mathcal H\setminus W}(u_0)$ is a subset of the union of the following four sets:

(i)  $E_A$: the set of edges $e$ such that $u_0\in e, v_0\notin e$,
\[e\setminus\{u_0\}\subseteq \bigcup_{i=2}^k T_i
\ \text{and} \
|e\cap T_i|\le 1 \ \text{for all } i\in[2,k];\]

(ii) $E_B^{(1)}$: the set of edges $e\in
E_{\mathcal H\setminus W}(u_0)$ that intersect $D$;

(iii) $E_{B}^{(2)}$: the set of edges containing $u_0$ and two vertices from some $T_i$;

(iv)  $E_{C}$: the set of edges containing both $u_0$ and $v_0$.

By Lemmas  \ref{WL}, \ref{independentset} and   \ref{claimA}, we have
\begin{eqnarray*}
	|E_B^{(1)}|
	&\le & \sum_{i=1}^k |D_i|\binom{n-2}{r-2}= O(n^{r-2}), \\
 \end{eqnarray*}
 and
  	\begin{eqnarray*}
 |E_B^{(2)}| &\le&  k f(t-1,h-1)\binom{n-3}{r-3} + (h-1)\binom{n-2}{r-2}+O(n^{r-2})=O(n^{r-2}).
\end{eqnarray*}
Clearly,
\[|E_C| \le \binom{n-2}{r-2} = O(n^{r-2}).
\]
From the eigenvalue-eigenvector equation at $u_0$,  we derive
\[
\lambda^{(p)}(\mathcal H)x_{u_0}^{p-1}
\le
(r-1)!
\left(
	\sum_{e\in E_A}\mathbf{x}^{e\setminus\{u_0\}}
	+
	O(n^{r-2})x_{u_0}^{r-1}
	+
	|W|\binom{n-2}{r-2}x_z^{\,r-1}
\right).
\]
Recalling that $x_{u_0}\ge c_0^{1/(r-1)}x_z$, we obtain
\begin{eqnarray}\label{eq4.10}
\sum_{e\in E_A}\mathbf{x}^{e\setminus\{u_0\}}
\ge
\frac{\lambda^{(p)}(\mathcal H)}{(r-1)!}x_{u_0}^{p-1}
- O(n^{r-2})x_{u_0}^{r-1}.
\end{eqnarray}

Now we construct an $r$-graph $\mathcal H'$ on the same vertex set $V(\mathcal H)$ with
\[
E(\mathcal H')
=
\bigl(E(\mathcal H)\setminus E_{\mathcal H}(v_0)\bigr)
\cup
\{(e\setminus\{u_0\})\cup\{v_0\}: e\in E_A\}.
\]
We first show that $\mathcal H'$ is $tK_{k+1}^{(r)}$-free.
Suppose for a contradiction  that $\mathcal H'$  contains  a copy $\mathcal F_0$ of $tK_{k+1}^{(r)}$ with
$v_0\in V(\mathcal F_0)$. Let $\mathcal F_1$ denote the copy of $K_{k+1}^{(r)}$
in $\mathcal F_0$ that contains $v_0$, and set
$\mathcal F_2 = \mathcal F_0\setminus\mathcal F_1\cong (t-1)K_{k+1}^{(r)}$.
Let $K$ be the core of $\mathcal F_1$.
If $v_0\in K$, then $K\setminus\{v_0\}\subset \bigcup_{i=2}^k T_i$.
By Lemma~\ref{WL}, there exists a vertex
$v'\in T_1\setminus V(\mathcal F_0)$  adjacent via dense pairs
to every vertex in $K\setminus\{v_0\}$.
Replacing $v_0$ with $v'$ in $\mathcal F_1$ yields another copy of
$K_{k+1}^{(r)}$ in $\mathcal H$ that is vertex-disjoint  from $\mathcal F_2$,
which implies  the existence of a copy of $tK_{k+1}^{(r)}$ in $\mathcal H$, a contradiction.
If $v_0\notin K$, let $e$ denote the edge of $\mathcal F_1$
containing $v_0$. Then $e\cap K = \{v_1,v_2\}$ is a dense pair,
so there exists an edge $f\in E(\mathcal H)$ such that
$f\cap V(\mathcal F_0) = \{v_1,v_2\}$.
Replacing $e$ with $f$ in $\mathcal F_0$ again yields a copy of
$tK_{k+1}^{(r)}$ in $\mathcal H$, a contradiction.

We next prove that $\lambda^{(p)}(\mathcal H')>\lambda^{(p)}(\mathcal H)$. From the construction of $\mathcal H'$, we have
\[
\lambda^{(p)}(\mathcal H')-\lambda^{(p)}(\mathcal H)
\ge
r!x_{v_0}
\bigg(
	\sum_{e\in E_A}\mathbf{x}^{e\setminus\{u_0\}}
	-
	\sum_{e\in E_{\mathcal H}(v_0)}\mathbf{x}^{e\setminus\{v_0\}}
\bigg).
\]
Since
$x_{v_0}<\left(1-\frac{1}{\sqrt{n}}\right)x_{u_0}$, it follows that
\begin{eqnarray}\label{eq4.11}
\sum_{e\in E_{\mathcal H}(v_0)}\mathbf{x}^{e\setminus\{v_0\}}
=
\frac{\lambda^{(p)}(\mathcal H)}{(r-1)!}x_{v_0}^{p-1}
<
\frac{\lambda^{(p)}(\mathcal H)}{(r-1)!}
\left(1-\frac{1}{\sqrt{n}}\right)^{p-1}
x_{u_0}^{p-1}.
\end{eqnarray}
Combining (\ref{eq4.10}) and (\ref{eq4.11}), we derive
\begin{align*}
\lambda^{(p)}(\mathcal H') - \lambda^{(p)}(\mathcal H)
&\ge
r!x_{v_0}
\Biggl[
	\frac{\lambda^{(p)}(\mathcal H)}{(r-1)!}x_{u_0}^{p-1}
	-
	O(n^{r-2})x_{u_0}^{r-1}
	-
	\lambda^{(p)}(\mathcal H)
	\left(1-\frac{1}{\sqrt{n}}\right)^{p-1}
	x_{u_0}^{p-1}
\Biggr] \\
&\ge
r!x_{v_0}
\left[
	\frac{p-2}{\sqrt{n}}\lambda^{(p)}(\mathcal H)x_{u_0}^{p-r}
	-
	O(n^{r-2})
\right]
x_{u_0}^{r-1}
> 0,
\end{align*}
where the second inequality hods since $(1-\frac{1}{\sqrt n})^{p-1}\leq 1- \frac{p-2}{\sqrt{n}}$ for sufficiently large $n$.
This contradicts the maximality of $\lambda^{(p)}(\mathcal H)$. \end{proof}

\begin{lemma}\label{lem:W=t-1}
$|W|=t-1$.
\end{lemma}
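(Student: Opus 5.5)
By Lemma~\ref{lem:L-emptyset} we have $L=\emptyset$, so Lemma~\ref{WL} gives $|W|\le t-1$. The plan is therefore to exclude $|W|\le t-2$ by comparing $\mathcal H$ with the $tK_{k+1}^{(r)}$-free graph $K_{t-1}^{r}\vee T_r(n-t+1,k)$.

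\textbf{Structure when $|W|=s\le t-2$.} Set $U_i=V_i\setminus W$. Lemmas~\ref{independentset} and~\ref{claimA} say that each $U_i$ contains at most $f(t-1,h-1)$ dominant pairs. Every edge meeting some $U_i$ in two vertices contains such a pair, or else a non-dominant pair whose codegree is below $d$. The maximality of $\mathcal H$ gives us one more tool. For any $r$-set $f$ with $|f\cap U_i|\ge 2$, the graph $\mathcal H+f$ must contain $tK_{k+1}^{(r)}$. If we trace the argument of Lemma~\ref{degree}, the vertices of $W$ together with the few vertices in dominant pairs can absorb at most $s+O(1)$ copies. The intended conclusion is that $\mathcal H$ misses the edges of a $(W\cup D)\vee\mathcal T'$ structure except for $O(n^{r-2})$ edges inside the parts.

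\textbf{Main step: spectral comparison.} Let $\mathcal G=K_{t-1}^{r}\vee K_k^{r}(n_1,\dots,n_k)$. We build it from $\mathcal H$ by choosing $t-1-s$ vertices $v_1,\dots,v_{t-1-s}$ of $T_1\cup\dots\cup T_k$ and making $W\cup\{v_j\}$ the complete part, with the remaining parts $U_i\setminus\{v_j\}$. Any copy of $K_{k+1}^{(r)}$ in $\mathcal G$ has a core using two vertices of one class; since the parts are independent, such a core must meet $W\cup\{v_j\}$. Hence $\mathcal G$ is $tK_{k+1}^{(r)}$-free.

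\textbf{Comparing the two eigen-sums.} Let $\mathbf x$ be the eigenvector of $\mathcal H$. We estimate
\[
P_{\mathcal G}(\mathbf x)-P_{\mathcal H}(\mathbf x)
\]
in two parts.
\begin{itemize}
\item \emph{Gain.} Each $v_j$ gains all edges through it that meet its own class. There are $\Theta(n^{r-1})$ such edges, each of weight at least $(1-n^{-1/2})^{r-1}x_{u_0}^{r-1}$ by Lemma~\ref{lem:down-bound}. This gives a gain of order $n^{r-1}x_{u_0}^{r}$.
\item \emph{Loss.} The bad edges of $\mathcal H$ that are not in $\mathcal G$ lie inside $\bigcup U_i$, apart from $W$. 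By Lemmas~\ref{independentset} and~\ref{claimA} there are only $O(n^{r-2})$ of them, so they contribute $O(n^{r-2})x_z^{r}$. Since $x_z\le c_0^{-1/(r-1)}x_{u_0}$, this is $O(n^{r-2})x_{u_0}^{r}$.
\end{itemize}
Hence $\lambda^{(p)}(\mathcal G)\ge P_{\mathcal G}(\mathbf x)>P_{\mathcal H}(\mathbf x)=\lambda^{(p)}(\mathcal H)$. This contradicts maximality and forces $|W|=t-1$.

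\textbf{Main obstacle.} The hardest step is the bookkeeping of edges of $\mathcal H$ that lie within a class $U_i$ but avoid $W$. We must ensure that they number only $O(n^{r-2})$, uniformly over all vertices. The plan is to sum Lemma~\ref{claimA} over the vertices of $U_i$ and to use Lemma~\ref{independentset} for the dominant pairs. If this global count fails, the fallback is to apply the comparison locally at a single vertex $v_1\in T_1$. We would then add all missing $r$-sets through $v_1$ and delete the $O(n^{r-2})$ bad edges through $v_1$, repeating the argument of Lemma~\ref{degreeW} to keep $\mathcal H'$ free of $tK_{k+1}^{(r)}$.
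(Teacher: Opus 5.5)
\textbf{There is a genuine gap, and it sits in your \emph{Loss} estimate.} Lemmas~\ref{independentset} and~\ref{claimA} do not give $O(n^{r-2})$ bad edges avoiding $W$.
\begin{itemize}
\item Lemma~\ref{claimA} is a bound for one \emph{fixed} vertex $u$, and Lemma~\ref{independentset} only controls the $O(1)$ dominant pairs.
\item Summing the per-vertex bound over $U_i$, as your last paragraph proposes, gives only $O(n^{r-1})$. So does the direct count: $O(n^2)$ non-dominant pairs, each of codegree below $d=h\binom{n}{r-3}$. Either way the count is of the same order $n^{r-1}$ as your gain, with a larger constant since $h$ is large compared with $t$. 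The comparison $P_{\mathcal G}(\mathbf x)>P_{\mathcal H}(\mathbf x)$ therefore does not follow.
\item Your \emph{Structure} paragraph gestures at ``absorbing'' copies, but it never states or proves the fact you need.
\item The fallback does not repair this. Suppose you make a single $v_1\in T_1$ adjacent to everything but keep the bad edges not through $v_1$; the new graph need not be $tK_{k+1}^{(r)}$-free. The argument of Lemma~\ref{degreeW} needs the modified vertex to have a dominant partner outside $\mathcal F_2$. A vertex of $W$ has one (it lies in at least $\theta n$ dominant pairs), but $v_1\in T_1$ does not. Concretely, if $\mathcal H$ contains $t-1$ disjoint copies of $K_{k+1}^{(r)}$ avoiding $v_1$, then the now-universal $v_1$, together with a crossing $k$-set of pairwise dense pairs off those copies, gives a $t$-th copy. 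So you must delete the other bad edges, and the global count returns.
\end{itemize}

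\textbf{The missing idea is that all bad edges avoiding $W$ meet a vertex set of bounded size.}
\begin{itemize}
\item By maximality, $\mathcal H$ contains a copy $\mathcal F_0$ of $(t-1)K_{k+1}^{(r)}$. Otherwise $\mathcal H+f$ would be $tK_{k+1}^{(r)}$-free for every missing edge $f$, and adding $f$ increases $\lambda^{(p)}$.
\item Let $e$ be a bad edge with $e\cap (W\cup V(\mathcal F_0))=\emptyset$. It contains a pair inside some $V_i$, recalling $L=\emptyset$. Lemmas~\ref{set} and~\ref{extention} then extend $e$ to a copy of $K_{k+1}^{(r)}$, choosing the extra vertices and covering edges greedily off the bounded set $V(\mathcal F_0)$. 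This copy is disjoint from $\mathcal F_0$, so $tK_{k+1}^{(r)}\subseteq\mathcal H$, a contradiction.
\item Hence every bad edge avoiding $W$ meets $V(\mathcal F_0)\setminus W$, a set of $O(1)$ vertices. For each such $u$, the per-vertex bounds give $O(n^{r-2})$ bad edges: Lemma~\ref{degree} handles in-class pairs through $u$, and Lemmas~\ref{independentset} and~\ref{claimA} handle in-class pairs avoiding $u$. The total loss is then $O(n^{r-2})x_{u_0}^{r}$, as required.
\end{itemize}
This is exactly the paper's argument. With this bound the paper upgrades only $u_0$ and deletes all bad edges avoiding $W$, so the new graph lies inside $K_{s+1}^{r}\vee(\text{$k$-partite})$ with $s+1\le t-1$. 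Your variant, which upgrades $t-1-s$ vertices, also works once the bound is in place.

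\textbf{Two smaller fixes.}
\begin{itemize}
\item In checking that $\mathcal G$ is $tK_{k+1}^{(r)}$-free, the core need not meet $W\cup\{v_j\}$, because a same-class pair can be covered by an edge through the complete part. The correct reason is that every copy of $K_{k+1}^{(r)}$ meets the complete part, since the rest of $\mathcal G$ has a $k$-partite shadow.
\item In the gain, count only edges avoiding $W$. Lemma~\ref{lem:down-bound} gives no lower bound on $x_w$ for $w\in W$.
\end{itemize}
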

\begin{proof}
Let $|W|=s$. By Lemmas~\ref{WL} and~\ref{lem:L-emptyset}, we have $s\le t-1$.
Suppose for a contradiction  that $s<t-1$.
Recall that $x_{u_0}=\max\{x_w:w\in V(\mathcal H)\setminus W\}$. Without loss of generality, assume that  $u_0\in V_1$.
Let $E_0$ denote the set of bad edges of $\mathcal H$ that are disjoint from $W$,
and let  $E_1=\{e: e \  \mbox{is an $r$-set containing $u_0$ and}\  e\not\in E(\mathcal H)\setminus E_0\}$.
Let $\mathcal H'$ be obtained from $\mathcal H$ by deleting all edges in $E_0$ and adding all edges in $E_1$.
Then $\mathcal H'$ is a subgraph of $K_{t-1}^{r} \,\vee\, T_r(n-t+1, k)$.
Then $\mathcal H'$ is  $tK_{k+1}^{(r)}$-free.

We next show that $\lambda^{(p)}(\mathcal H')>\lambda^{(p)}(\mathcal H)$, which leads to the desired contradiction.
In fact, we have
\begin{align}\label{ieq:W=t-1}
\lambda^{(p)}(\mathcal H')-\lambda^{(p)}(\mathcal H)
&\ge r!\left(\sum_{e\in E_1}\mathbf{x}^{e}-\sum_{e\in E_0}\mathbf{x}^e\right)\nonumber\\
&\ge r!|E_1|\left(1-\frac{1}{\sqrt{n}}\right)^r x_{u_0}^r
 - r!|E_0|\,x_{u_0}^r.
\end{align}
By the maximality of $\lambda^{(p)}(\mathcal H)$,
$\mathcal H$ contains a copy of $(t-1)K_{k+1}^{(r)}$, denoted $\mathcal F_0$, such that $W\subset V(\mathcal F_0)$.
We claim that every edge $e\in E_0$ intersects $V(\mathcal F_0)$.
Otherwise, there exists a bad edge $e\in E_0$ with
$e\cap V(\mathcal F_0)=\emptyset$, and without loss of generality,
a pair $\{u,v\}\subset e\cap V_1$.
By Lemmas \ref{set} and \ref{extention},
 $\mathcal H$ contains a copy of $K^{(r)}_{k+1}$ containing $e$, denoted $\mathcal F'$, with $V(\mathcal F')\cap V(\mathcal F_0)=\emptyset$.
 As a consequence, $\mathcal F'\cup\mathcal F_0\cong tK_{k+1}^{(r)}$,  which contradicts the fact that $\mathcal H$ is $tK_{k+1}^{(r)}$-free.
We therefore conclude that every edge in $E_0$  intersect $V(\mathcal F_0)$.

We  now estimate the size of $|E_0|$. For a fixed vertex $u\in (V(\mathcal F_0)\cap V_1)\setminus W,$
Lemma \ref{degree} implies that  the number of edges in  $E_0$  containing $u$ and intersecting $V_1\setminus\{u\}$ is at most
 \[
     (|V_1|-h)d + (h-1)\binom{n-2}{r-2}
    = O(n^{r-2}).
    \]
 Moreover, by Lemmas \ref{independentset} and \ref{claimA},  the number of edges in  $E_0$ that contain $u$ and at least two  vertices of $V_j\setminus W$ for some $j\in[k]$
    is also  at most $O(n^{r-2})$.
We thus have
\[
|E_0|
\le (|V(\mathcal F_0)|-s)\cdot O(n^{r-2})
= O(n^{r-2}).
\]
On the other hand, by the definition of $E_1$, it holds that
\[
|E_1|\ge |V_1|\binom{n-s-1}{r-2}=O(n^{r-1}).
\]
Substituting these into \eqref{ieq:W=t-1}, we obtain
\[
\lambda^{(p)}(\mathcal H')>\lambda^{(p)}(\mathcal H),
\]
 for sufficiently large $n$,
which contradicts the maximality of $\lambda^{(p)}(\mathcal H)$.
We therefore conclude that  $s=t-1$.
\end{proof}

\begin{lemma}\label{lem:strong-independent-set}
	For each $i\in[k]$ and every edge $e\in E(\mathcal H\setminus W)$, we have $|e\cap (V_i\setminus W)|\leq 1$.
\end{lemma}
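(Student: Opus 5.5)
We argue by contradiction and follow the edge-switching strategy of Lemmas~\ref{lem:L-emptyset}, \ref{lem:down-bound} and~\ref{lem:W=t-1}. Recall what is already known: $L=\emptyset$ by Lemma~\ref{lem:L-emptyset}, $|W|=t-1$ by Lemma~\ref{lem:W=t-1}, and every $u\in W$ lies in all $r$-sets containing it by Lemma~\ref{degreeW}. Suppose some edge $e\in E(\mathcal H\setminus W)$ contains two vertices $u,v\in V_i\setminus W$. Then $e$ is a bad edge disjoint from $W$.

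\emph{Step 1: reduce to the extension argument.} Since $L=\emptyset$, Lemma~\ref{extention} applies to $e$ and gives a copy $\mathcal F'$ of $K_{k+1}^{(r)}$ with $e\in E(\mathcal F')$. The other vertices of $\mathcal F'$ come from Lemma~\ref{set}, and the padding vertices are chosen greedily using the codegree $>d$ of dense pairs. So $\mathcal F'$ can be taken to avoid any prescribed set of $O(1)$ vertices outside $e$. In particular we may require $V(\mathcal F')\cap W=\emptyset$, which is possible because $e\cap W=\emptyset$.

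\emph{Step 2: build $t-1$ disjoint copies through $W$.} Write $W=\{w_1,\dots,w_{t-1}\}$. Each $w_j$ is adjacent to every $r$-set containing it (Lemma~\ref{degreeW}), so we can construct a copy of $K_{k+1}^{(r)}$ using $w_j$ as a core vertex. Choose fresh vertices $a_j^2,\dots,a_j^k$ with $a_j^s\in V_s\setminus W$ such that all pairs among them are dense (Lemma~\ref{set}), and take a further vertex $a_j^1\in V_1$. Every pair $\{w_j,x\}$ lies in edges avoiding any bounded set, since $w_j$ has full degree. Every pair among the $a$'s is dense, so it can also be covered by an edge with fresh padding. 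Doing this greedily for $j=1,\dots,t-1$ gives $t-1$ vertex-disjoint copies, all avoiding $V(\mathcal F')$, because only $O(1)$ vertices are forbidden at each stage. Together with $\mathcal F'$ this yields $tK_{k+1}^{(r)}\subseteq\mathcal H$, a contradiction.

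\emph{Main obstacle.} The main point to check is the disjointness bookkeeping. The core of $\mathcal F'$ must avoid the previously chosen copies, and Lemma~\ref{set} only supplies $h$ candidates per class. This is why $h=|V(tK_{k+1}^{(r)})|$ was chosen: at most $h-1$ vertices are ever forbidden. So I would first build the $t-1$ copies through $W$ and then apply Lemma~\ref{set} with the set $S$ enlarged to force avoidance. If avoiding $u,v$ inside the $W$-copies is a problem, I would build the $W$-copies first while excluding $u$ and $v$ explicitly.

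Alternatively, this is exactly the claim in the proof of Lemma~\ref{lem:W=t-1}, which says that every edge of $E_0$ meets $V(\mathcal F_0)$. Here we have the freedom to choose $\mathcal F_0$ disjoint from $e$ outside $W$, and that gives the contradiction directly.
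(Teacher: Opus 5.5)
Your proposal is correct and follows the paper's argument: Lemmas~\ref{set} and~\ref{extention} (with $L=\emptyset$) give a copy of $K_{k+1}^{(r)}$ through the bad edge $e$, and Lemmas~\ref{degreeW} and~\ref{lem:W=t-1} give $t-1$ further vertex-disjoint copies built around the full-degree vertices of $W$, so $tK_{k+1}^{(r)}\subseteq\mathcal H$. Your bookkeeping (at most $h-1$ used vertices at each greedy step, codegree $>d$ for the padding) makes explicit what the paper leaves implicit; note that no edge-switching is actually involved, and that the $W$-copies must avoid all of $e$, not only $u$ and $v$.
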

\begin{proof}
	Suppose to the contrary that, without loss of generality, there exists a pair $\{u,v\}\subseteq V_1\setminus W$  contained in some bad edge $e\in E(\mathcal H\setminus W)$.
By Lemmas \ref{set} and \ref{extention}, $\mathcal H$ contains a copy of $K^{(r)}_{k+1}$ containing $e$.
Further,  Lemmas~\ref{degreeW} and~\ref{lem:W=t-1} guarantee the existence of an additional $t-1$  vertex-disjoint copies of $K_{k+1}^{(r)}$, all of which are vertex-disjoint from the aforementioned copy of $K^{(r)}_{k+1}$. This implies that $\mathcal H$ contains a copy of
 $tK_{k+1}^{(r)}$ in $\mathcal H$, a contradiction.
\end{proof}

\noindent \textbf{Proof of Theorem \ref{tH}}.
Let $\mathcal H^*=\mathcal H[V(\mathcal H)\setminus W]$, $U_i=V_i\setminus W$, and $|U_i|=n_i$ for $i\in [k]$.
By Lemmas \ref{lem:L-emptyset}, \ref{degreeW},  \ref{lem:W=t-1}, and \ref{lem:strong-independent-set},
 $\mathcal H$ is a
subgraph of $K_{t-1}^{r}\vee K_k^{r}(n_1,\cdots,n_k)$. Given the maximality of $\lambda^{(p)} (\mathcal H)$, it follows
from Lemma \ref{lem:balance-set}
that $\mathcal H\cong K_{t-1}^{r}\vee T_r(n-t+1,k)$. \hfill $\Box$

\section*{Declaration of interests}
The authors declare that there is no conflict of interest.

\section*{Acknowledgments}The research was partially supported by the National
		Nature Science Foundation of China (grant numbers 12331012, 12571375, 12301437).


\begin{thebibliography}{10}

\bibitem{CioabaFengTaitZhang2020} S. Cioab\u{a}, L.H. Feng, M. Tait, X.D. Zhang, The maximum spectral radius of graphs without friendship subgraphs, \emph{Electron. J. Comb.}, {\bfseries 27}(4) P4.22, 2020.

\bibitem{ChvatalHanson} V. Chv\'atal, D. Hanson, Degree and matchings, \emph{J. Combin. Theory Ser. B}, {\bfseries 20}: 128--138, 1976.



\bibitem{Cooper2012} J. Cooper, A. Dutle, Spectra of uniform hypergraphs, \emph{Linear Algebra Appl.} {\bfseries 436}: 3268--3299, 2012.

\bibitem{EllinghamLuWang2022} M.N. Ellingham, L. Lu, Z. Wang, Maximum spectral radius of outerplanar $3$-uniform hypergraphs, \emph{J. Graph Theory} {\bfseries 100}(4): 671--685, 2022.

\bibitem{FangGaoChangHou2025} L. Fang, G. Gao, A. Chang, Y. Hou, On the spectral Tur\'an problems for bipartite hypergraphs,
     \emph{Electron. J. Comb.}, {\bfseries 32}(4) P4.53, 2025.


\bibitem{Friedman-Wigderson1995} J. Friedman, A. Wigderson, On the second eigenvalue of hypergraphs, \emph{Combinatorica} {\bfseries 15}(1): 43--65, 1995.
	
	


\bibitem{GaoChangHou2022} G. Gao, A. Chang, Y. Hou, Spectral radius on linear $r$-graphs without expanded $K_{r+1}$, \emph{SIAM J. Discrete Math.}, {\bfseries 36}\,(2): 1000--1011, 2022.

   \bibitem{HL1} J. Hou, H. Li, X. Liu, L. Yuan,  Y. Zhang, A step towards a general density
Corr\'adi-Hajnal theorem, \emph{Canad. J. Math.}, 1-36, 2025.

 \bibitem{HL2} J. Hou, C. Hu, H. Li, X. Liu,
C. Yang, Y. Zhang,    Toward a density Corr\'adi-Hajnal theorem for
degenerate hypergraphs, \emph{J. Combin. Theory Ser. B}, {\bfseries 172}: 221--262, 2025.


\bibitem{hou2024criterion} J. Hou, X. Liu, H. Zhao, A criterion for Andr{\'a}sfai-Erd{\H{o}}s-S{\'o}s type theorems and applications,
arXiv preprint arXiv:2401.17219, 2024.
\bibitem{Kang-Nikiforov-Yuan2014}
L. Kang, V. Nikiforov, X. Yuan, The $p$-spectral radius of $k$-partite and $k$-chromatic uniform hypergraphs,
\emph{Linear Algebra Appl.}, {\bfseries 478}: 81--107, 2015.

\bibitem{Keevash2011} P. Keevash, Hypergraph Tur\'an problems,
in Surveys in Combinatorics, Cambridge University Press, Cambridge, pp.83--139, 2011.

\bibitem{Keevash-Lenz-Mubayi2014}
P. Keevash, J. Lenz, D. Mubayi, Spectral extremal problems for hypergraphs,
\emph{SIAM J. Discrete Math.}, {\bfseries 28}\,(4): 1838--1854, 2014.

\bibitem{LiLiuFeng2022} Y. Li, W. Liu, L. Feng, A survey on spectral conditions for some extremal graph problems, \emph{Adv. Math. (China)}, {\bfseries 51(2)}: 193--258, 2022.

\bibitem{LiuNiWangKang2024} L. Liu, Z. Ni, J. Wang, L. Kang, Hypergraph extensions of spectral Tur\'an theorem, preprint available at arXiv: 2408.03122, 2024.

\bibitem{Mubayi2006}
D. Mubayi, A hypergraph extension of Tur\'an's theorem,
\emph{J. Combin. Theory Ser. B}, {\bfseries 96}: 122--134, 2006.

\bibitem{Mubayi-Verstraete2016} D. Mubayi, J. Verstra\"ete,
A survey of Tur\'an problems for expansions, in Recent Trends in Combinatorics,
IMA Vol. Math. Appl. 159, Springer, pp.117--143, 2016.

\bibitem{NiLiuKang2024} Z. Ni, L. Liu, L. Kang, Spectral Tur\'an-type problems on cancellative hypergraphs, \emph{Electron. J. Comb.}, {\bfseries 31(2)} P2.32, 2024.

\bibitem{Nikiforov2014} V. Nikiforov, Analytic methods for uniform hypergraphs, \emph{Linear Algebra Appl.}, {\bfseries 457}: 455--535, 2014.

\bibitem{Nikiforov2007} V. Nikiforov, Bounds on graph eigenvalues II, \emph{Linear Algebra Appl.}, {\bfseries 427}: 183--189, 2007.

\bibitem{Nikiforov2010} V. Nikiforov, The spectral radius of graphs without paths and cycles of specified length, \emph{Linear Algebra Appl.}, {\bfseries 432(9)}: 2243--2256, 2010.

\bibitem{Pikhurko2013} O. Pikhurko, Exact computation of the hypergraph Tur\'an function for expanded complete $2$-graphs, \emph{J. Combin. Theory Ser. B}, {\bfseries 103}: 220--225, 2013.

\bibitem{RodlSkokan2006} V. R{\"o}dl, J. Skokan, Applications of the regularity lemma for uniform hypergraphs, \emph{Random Structures Algorithms}, {\bfseries 28(2)}: 180--194, 2006.

\bibitem{SheFanKangHou2023} C. She, Y. Fan, L. Kang, Y. Hou, Linear spectral Tur\'an problems for expansions of graphs with given chromatic number, \emph{Acta Math. Appl. Sin. Engl. Ser.},  online,  2025.

\bibitem{ZhengLiFan2024} J. Zheng, H. Li, Y. Fan, Spectral Tur\'an problems for hypergraphs with bipartite or multipartite pattern,  preprint available at arXiv: 2409.17678, 2024.
\end{thebibliography}
\end{document}